\theoremstyle{plain}
\newtheorem{theorem}{Theorem}[section]
\newtheorem{proposition}[theorem]{Proposition}
\theoremstyle{definition}
\newtheorem{definition}[theorem]{Definition}
\theoremstyle{remark}
\newtheorem{remark}[theorem]{Remark}
\numberwithin{equation}{section}
\def\bold#1{\mbox{\boldmath $#1$}}
\newcommand{\uu}[1]{\bold{#1}}
\newcommand{\mbu}{\uu{u}}
\newcommand{\mbq}{\uu{q}}
\newcommand{\ba}{\underline{a}} 
\newcommand{\br}{\underline{\rho}}
\newcommand{\bu}{\underline{\mbu}}
\newcommand{\baru}{\underline{u}}
\newcommand{\const}{\mathrm{const.}}
\newcommand{\abs}[1]{\lvert#1\rvert}
\newcommand{\D}{\partial}
\newcommand{\Dlt}{\Delta t}
\newcommand{\Dt}{\partial_t}
\newcommand{\dvg}{\nabla \cdot }
\newcommand{\grd}{\nabla}
\newcommand{\ld}{\lambda}
\newcommand{\mbb}{\mathbb}
\newcommand{\mcal}{\mathcal}
\newcommand{\norm}[1]{\lVert#1\rVert}
\newcommand{\Norm}[1]{{\left\vert\kern-0.25ex\left\vert\kern-0.25ex\left\vert #1 
    \right\vert\kern-0.25ex\right\vert\kern-0.25ex\right\vert}}
\newcommand{\rf}{\mathrm{ref}}
\newcommand{\CWENO}{\mathrm{CWENO}}
\newcommand{\solspc}{L^2(\mbb{T}^d)^{1+d}}
\newcommand{\veps}{\varepsilon}
\def\bold#1{\mbox{\boldmath $#1$}}
\begin{document}

\title[Low Mach Number Accurate IMEX Finite Volume Schemes]{Asymptotic
  Preserving Low Mach Number Accurate IMEX Finite Volume Schemes
  for the Isentropic Euler Equations}    

\author[Arun]{K. R. Arun}
\address{School of Mathematics, Indian Institute of Science Education
  and Research Thiruvananthapuram, Thiruvananthapuram - 695551,
  India.} 
\email{arun@iisertvm.ac.in}

\author[Samantaray]{S. Samantaray}
\address{School of Mathematics, Indian Institute of Science Education
  and Research Thiruvananthapuram, Thiruvananthapuram - 695551,
  India.} 
\email{sauravsam13@iisertvm.ac.in}

\date{\today}

\subjclass[2010]{Primary 35L45, 35L60, 35L65, 35L67; Secondary 65M06,
  65M08}

\keywords{Compressible Euler system, Incompressible Euler system, Zero
  Mach number limit, IMEX-RK schemes, Asymptotic preserving,
  Asymptotic accuracy, Finite volume method}

\begin{abstract}
 In this paper, the design and analysis of a class of second order
 accurate IMEX finite volume schemes for the compressible Euler
 equations in the zero Mach number limit is presented. In order to
 account for the fast and slow waves, the nonlinear fluxes in the
 Euler equations are split into stiff and non-stiff components,
 respectively. The time discretisation is performed by an IMEX
 Runge-Kutta method, therein the stiff terms are treated implicitly
 and the non-stiff terms explicitly. In the space discretisation, a
 Rusanov-type central flux is used for the non-stiff part, and simple
 central differencing for the stiff part. Both the time semi-discrete
 and space-time fully-discrete schemes are shown to be asymptotic
 preserving. The numerical experiments confirm that the schemes
 achieve uniform second order convergence with respect to the Mach
 number. A notion of accuracy at low Mach numbers, termed as the
 asymptotic accuracy, is introduced in terms of the invariance of a
 well-prepared space of constant densities and divergence-free
 velocities. The asymptotic accuracy is concerned with the closeness
 of the compressible solution with that of its incompressible
 counterpart in a low Mach number regime. It is shown theoretically as
 well as numerically that the proposed schemes are asymptotically
 accurate.        
\end{abstract}

\maketitle

\section{Introduction}
\label{sec:sec1}

Many physical phenomena in hydrodynamics and magnetohydrodynamics are  
governed by the compressible Euler equations which represent the
fundamental conservation principles of mass, momentum and
energy. In many problems from meteorology, geophysics, combustion
etc., one often encounters a situation where a characteristic fluid
velocity is much lesser than a corresponding sound velocity in the
medium. In such cases, the ratio of these two velocities typically gives
rise to a singular perturbation parameter, such as the Mach number or 
the Froude number. When the motion of the fluid is humble in
comparison to the motion of sound waves in the medium, i.e.\ when
the Mach number tends to zero, the fluid can be treated almost
incompressible. From a mathematical point of view, one can say that
solutions of the compressible Euler equations are close to those of
their incompressible counterparts as the Mach number approaches
zero. In many seminal works, e.g.\
\cite{klainerman-majda,klainermanMajda82,schochet}, a rigorous
convergence analysis of the solutions of the purely hyperbolic
compressible Euler system to those of the mixed hyperbolic-elliptic
incompressible Euler system in the limit of zero Mach number can be
found.   

It is widely known in the literature that standard explicit
Gudunov-type finite volume compressible flow solvers suffer from a lot
pathologies at low Mach numbers. The stiffness arising from stringent
CFL stability restrictions, loss of accuracy due to the creation of
spurious waves, lack of stability due to the dependence of numerical
viscosity on the Mach number, and inability to respect the 
transitional behaviour of the continuous system of governing equations
are some of the commonly observed ailments, to name but a few. The
stiffness due to CFL restrictions severely imposes the timesteps
to be as small as the order of the Mach number, resulting in a drastic
slowdown of the solver in a low Mach number regime. The inaccuracy
of the numerical solution, and inability of the numerical method to
respect the transitional behaviour of the equations are mainly due to
the loss of information in the passage from continuous to discrete
level. We refer the reader to \cite{dellacherie, guillard, klein} and
the references therein for a detailed account of the above-mentioned
anomalies.  

Design of stable and accurate numerical schemes for weakly
compressible or low Mach number flows is a challenging task due to the 
aforementioned difficulties and more. In the literature, several
approaches to develop numerical schemes which work for small 
as well as large Mach numbers, addressing one or more of the
above issues, can be found. In \cite{bijl-wesseling},
Bijl and Wesseling used the standard MAC-type finite difference method
for the incompressible Euler equations to derive a scheme which can
simulate flows at a wide range of Mach numbers. Another approach is
due to Munz et al.\ \cite{munz-etal}, in which the wellknown SIMPLE
method for incompressible flows is extended to weakly compressible
flows using the insight gained from an asymptotic analysis. Recently,
Feistauer and Ku\v{c}era \cite{feistauer-kuvcera} developed an all
Mach number flow solver in the context of high order discontinuous
Galerkin methods. We refer the interested reader also to, e.g.\
\cite{benacchio-etal, klein-etal, smolarkiewicz-etal}, for a different
approach using the so-called sound-proof models which eliminate sound
waves completely and provide good approximations for low speed
atmospheric flows.   

The convergence of solutions of the compressible Euler equations to
those of the incompressible Euler equations in the limit of zero Mach
number, and the associated challenges in numerical approximation is an
active area of research. The consistency of the limit of a
compressible flow solver, if it exists, with the incompressible limit
system, and the stiffness arising from stringent stability requirements
are usually addressed in the framework of the famous asymptotic
preserving (AP) methodology. The notion of AP schemes was initially
introduced by Jin \cite{jin-ap} for kinetic transport equations; see
also \cite{Degond13, Jin12} for a comprehensive review of this
subject. The AP framework provides a systematic and robust tool for
developing numerical discretisation techniques for singular
perturbation problems. The procedure takes into account the
transitional behaviour of the governing equations of the problem, and 
its stability requirements are independent of the singular
perturbation parameter. Hence, an AP scheme for the compressible Euler
equations automatically transforms to an incompressible solver when the
Mach number goes to zero. As discussed in \cite{Degond13},
semi-implicit time-stepping techniques provide a systematic approach
to derive AP schemes; see, e.g.\ \cite{BispArunLucaNoe, cordier,
  degond_tang, NoelleArunMunz, zakerzadeh-noelle}, for some
semi-implicit AP schemes for the Euler or shallow water equations.      

The presence of singular perturbation parameters gives rise to
multiple scales in time as well as space. As mentioned above, a widely
used strategy to resolve multiple timescales, and to derive an AP
scheme is the use of semi-implicit time discretisations. The property
of being AP of a numerical approximation is primarily that of the
particular time discretisation used. Implicit-explicit Runge-Kutta 
(IMEX-RK) schemes offer a precise and robust approach to define high
order semi-implicit AP schemes for singularly perturbed ordinary
differential equations (ODEs). In the literature, one can find several
references where these schemes have been extensively used and
analysed for stiff systems of ODEs, and also time-dependent partial
differential equations. We refer the interested reader to
\cite{AscherRuuthSpiteri, pareschi-russo-conf, pareschi-russo} for the
application of IMEX schemes to stiff ODEs, to
\cite{boscarino-sinum2007} for an error analysis of IMEX schemes, and
to
\cite{boscarino-scandurra,cordier,degond_tang,dimarco_loubere_jcp_2018,
  dimarco_loubere_siamjsc_2017,haack,tang_kinrelmod_2012} for an
application of the IMEX strategy to low Mach number Euler
equations. However, the decisive step in the implementation of an
IMEX-RK method to a system of conservation laws, such as the Euler or
shallow water equations, is a splitting of the fluxes into the
so-called stiff and non-stiff components in order that the resulting
scheme is AP.      

Recently, in \cite{dellacherie,dellacherie-omnes-rieper}, the authors
have presented the results of a detailed study on the accuracy of
Godunov-type schemes at low Mach numbers. It has been shown in these
papers that the origin of inaccuracies is predominantly due to the
formation of spurious acoustic waves in the discrete solution. There
are several factors which can attributed to this, such as the number
of space dimensions, particular discretisation strategies and
numerical diffusion in the scheme used, cell geometry chosen, and so
on. Based on the work of Schochet \cite{schochet}, Dellacherie in
\cite{dellacherie} has presented the results of an extensive study on
the accuracy of Gudunov-type schemes at low Mach numbers on Cartesian
meshes. At the core of this analysis is an estimate from
\cite{schochet} which states that when the Mach number is small, a
solution to the compressible Euler equations is ``$\veps$-close'' to that
of the incompressible equations whenever the corresponding initial
data are close, where $\veps$ is a reference Mach number, cf.\ also
\cite{klainerman-majda,klainermanMajda82}. Further, an analysis of the
nature of solutions of the linear wave equation system at low Mach
numbers using Hodge decompositions shows that a sufficient condition
to ensure the above-mentioned estimate in the linear case is the
invariance of a so-called `well-prepared space' of constant densities
and divergence-free velocities. In the light of this invariance
property, Dellacherie identified the origin of inaccuracies of
standard Godunov-type schemes in two and three-dimensional geometries
using the notion of first-order modified equations. Finally, a
sufficient condition to avoid the generation of spurious waves, and to
ensure accuracy at low Mach numbers for nonlinear schemes for the
Euler equations is proposed by modifying the numerical viscosity, and
changing the discretisation of the pressure gradient term. 

In this paper, we study a class of second order accurate finite volume
schemes for the isentropic Euler equations, employing the IMEX-RK
time-stepping procedure. The flux-splitting required to design these
IMEX schemes is analogous to that used in
\cite{boscarino-scandurra,dimarco_loubere_jcp_2018,tang_kinrelmod_2012}. We
prove that both the time semi-discrete and space-time fully-discrete
schemes are asymptotically consistent with the limiting incompressible
system in the zero Mach number limit, and that their stability constraints are independent of the Mach number which
implies the AP property. We also define a notion of accuracy at low
Mach numbers, hereafter designated as the asymptotic accuracy, in
terms of the invariance of the well-prepared space as in
\cite{dellacherie}. In fact, we show that the invariance is the key
property for the numerical solution to mimic the $\veps$-closeness as
in the case of the exact solution. As result of the $\veps$-closeness,
the error remains bounded, and we obtain the accuracy in the stiff
limit $\veps=0$, justifying the name asymptotic accuracy. It is shown
theoretically as well as numerically that  both the time semi-discrete
and space-time fully-discrete schemes are asymptotically accurate. The
rest of this paper is organised as follows. In Section~\ref{sec:sec2},
we mention the zero Mach number limits of the isentropic Euler and the
linear wave equation systems, and recall the main convergence results
from \cite{dellacherie}. Section~\ref{sec:sec3} is dedicated to the 
defintion of an AP and AA scheme. In Section~\ref{sec:sec4}, we
introduce the IMEX-RK time semi-discrete schemes, and in
Section~\ref{sec:sec5} we show their AP property by giving a
sufficient condition for $L^2$-stability, and the asymptotic
accuracy. Space-time fully-discrete schemes, and their analysis
containing a similar stability condition, are presented in
Section~\ref{sec:sec6}. In Section~\ref{sec:sec7}, numerical evidences
are provided to support the theoretical claims made. Finally, we close
the paper with some concluding remarks in Section~\ref{sec:sec8}.   

\section{Zero Mach Number Limits of the Euler and the Wave Equation
  Systems}
\label{sec:sec2}

\subsection{Zero Mach Number Limit of the Euler System }
\label{sec:zero-mach-number}

The scaled, compressible Euler equations are given by 
\begin{equation} 	
  \label{eq:comp}
  \begin{aligned} 
    \Dt \rho + \dvg (\rho \mbu) &=0, \\
    \Dt (\rho\mbu) + \dvg (\rho \mbu \otimes \mbu) + \frac{\grd p}{\veps^2} &=0, 
  \end{aligned}
\end{equation}
where the independent variables are time $t>0$ and space
$\uu{x}\in\mbb{R}^d,d=1,2,3$, and the dependent variables are the density
$\rho=\rho(t,\uu{x})>0$ and velocity of the fluid
$\mbu=\mbu(t,\uu{x})\in\mbb{R}^d$. In order to close the system \eqref{eq:comp},
an isentropic equation of state $p(\rho) = \rho^\gamma$, where
$\gamma$ is a positive constant, is assumed. Here, the parameter 
$\veps$ is a reference Mach number defined by  
\begin{equation}
  \label{eq:eps_defn}
  \veps:=\frac{u_\rf}{c_\rf},
\end{equation}
with $u_\rf$ and $c_\rf$ being a reference fluid speed, and a reference
sound speed, respectively.  

In \cite{klainerman-majda}, Klainerman and Majda rigorously
investigated the limit of solutions of \eqref{eq:comp} as
$\veps\to0$. Formally, the asymptotic nature of solutions to
\eqref{eq:comp} can be studied by plugging-in the ansatz 
\begin{equation}
  \label{eq:f_ansatz}
  f(t,\uu{x})=f_{(0)}(t,\uu{x})+\veps f_{(1)}(t,\uu{x})+\veps^2 f_{(2)}(t,\uu{x})
\end{equation}
for all dependent variables. Performing a scale analysis, and using
appropriate boundary conditions, cf.\ e.g.\ \cite{meister}, yields the
multiscale representation 
\begin{equation}
  \label{eq:mScaleRho}
  p(t,\uu{x}) = p_{(0)}(t)+\veps^2p_{(2)}(t,\uu{x}), 
\end{equation}
for the pressure and the incompressible Euler system:    
\begin{equation}
  \label{eq:incomp}
  \begin{aligned}
    \Dt \mbu_{(0)}+\dvg\left(\mbu_{(0)}\otimes \mbu_{(0)}\right)+\grd p_{(2)}&=0,\\
    \dvg \mbu_{(0)} &= 0
  \end{aligned}
\end{equation}
for the velocity $\mbu_{(0)}$. Here, the second order pressure $p_{(2)}$
survives as the incompressible pressure.  

Loosely speaking, the results of
\cite{klainerman-majda,klainermanMajda82} shows that when the initial
data are almost incompressible, the solutions of the compressible
Euler equations \eqref{eq:comp} are good approximations to those of
the incompressible Euler equations \eqref{eq:incomp}. However, in
order to make a more precise statement, and to carry out the analysis
presented later, we recall the following convergence result due to
Schochet \cite{schochet}, and some related results. Note that the
non-dimensionalised, isentropic Euler system \eqref{eq:comp} can be
recast into a non-conservative, evolution form as 
\begin{equation} 
  \label{eq:NondimEuler}
  \Dt U + \mcal{H}(U) + \frac{1}{\veps^2} \mcal{L}(U) = 0,	
\end{equation}
where
\begin{equation}
  \label{eq:Notn1}
  U := \begin{pmatrix}
    \rho\\
    \mbu
  \end{pmatrix}, \
  \mcal{H}(U) := \begin{pmatrix}
    \mbu \cdot \grd \rho \\
    (\mbu \cdot \grd) \mbu 
  \end{pmatrix}, \ 
  \mcal{L}(U) := \begin{pmatrix}
    \veps^2\rho \dvg \mbu \\
    \frac{p^\prime(\rho)}{\rho} \grd \rho
  \end{pmatrix}.	
\end{equation} 
Here, $\mcal{H}$ is the convective operator with a time scale of
order $1$ and $\mcal{L}/\veps^2$ is the acoustic operator with a time
scale of the order of $\veps$. The system \eqref{eq:NondimEuler} is
supplied with periodic boundary conditions
\begin{equation}
  \label{eq:bcNondimEuler}
  U(0,\uu{x})=U_0(\uu{x}), \ \uu{x}\in\mbb{T}^d,
\end{equation}
where $\mbb{T}^d$ denotes the $d$-dimensional torus. 

Following \cite{dellacherie,schochet}, in order to study the solutions
$U=(\rho,\mbu)^T$ of \eqref{eq:comp} in the space $\solspc$,  we
consider the subspaces  
\begin{align}
  {\mcal{E}} &:= \left\{ U \in \solspc \colon \grd \rho = 0, \
               \text{and}  \ \dvg \mbu = 0 \right\}, 	\\  
  \tilde{\mcal{E}} &:= \left\{ U \in \solspc \colon \int_{\mbb{T}^d}
                     \rho d\uu{x} = 0,  \ \text{and}  \ \nabla \times
                     \mbu = 0\right\}. 
\end{align}
Note that $\mcal{E}$ is the subspace of spatially constant densities
and divergence-free velocities. In other words, it is the
`incompressible' subspace and hence, hereafter, $\mcal{E}$ is referred
to as the `well-prepared' space. Having defined $\mcal{E}$ and
$\tilde{\mcal{E}}$, the following Helmholtz-Leray decomposition can be
immediately written:
\begin{equation}
  \label{eq:HelmLer}
  \mcal{E} \oplus \tilde{\mcal{E}} = \solspc,   
  \ \mbox{and } \  \mcal{E} \perp \tilde{\mcal{E}}.
\end{equation}
Thus, for any $U \in \solspc$, there exists a unique $\hat{U} \in
\mcal{E}$ and $\tilde{U}\in \tilde{\mcal{E}}$, such that $U =
\hat{U} + \tilde{U}$. Let us also define the Helmholtz-Leray
projection of any $U \in \solspc$ onto the space $\mcal{E}$ as  
\begin{equation}
  \label{eq:Pdefn}
  \mbb{P}U := \hat{U}.
\end{equation}
In \cite{schochet}, Schochet proved the following crucial estimate
for the solutions of \eqref{eq:comp} in the limit $\veps\to0$, and we
restate this important theorem as done in \cite{dellacherie}. 
\begin{theorem}[\cite{dellacherie,schochet}] 	
  \label{ThmSchochet}
  Let $U$ be a solution of the initial value problem for the system
  \eqref{eq:NondimEuler}, i.e.\ 
  \begin{align}
    \Dt U + \mcal{H}(U) + \frac{1}{\veps^2} \mcal{L}(U) & = 0, \ t>0,
                                                          \ \uu{x}\in\mbb{T}^d,\\ 
    U(0,\uu{x}) &= U_0(\uu{x}), \ \uu{x}\in\mbb{T}^d,
  \end{align}
  and let $\bar{U}$ be a solution of the projected subsystem:  
  \begin{align}
    \Dt \bar{U} + \mbb{P}\mcal{H} (\bar{U}) &= 0, \ t>0, \ 
                                                  \uu{x}\in\mbb{T}^d, \label{eq:Psys}\\ 
    \bar{U}(0,\uu{x}) &= \hat{U}_0(\uu{x}), \ \uu{x} \in
                        \mbb{T}^d,	 \label{eq:PsysIC} 
  \end{align}
  where $\hat{U}_0:=\mbb{P}U_0$. Then, for $\veps \ll 1$, there
  holds the estimate: 
  \begin{equation}
    \norm{\rho_0-\hat{\rho}_0} = \mcal{O}(\veps^2), \ 
    \norm{\mbu_0-\hat{\mbu}_0} = \mcal{O}(\veps) 
    \implies
    \norm{\rho(t)-\bar{\rho}(t)} = \mcal{O}(\veps^2),  \ 
    \norm{\mbu(t)-\bar{\mbu}(t)} = \mcal{O}(\veps) 
    \ \mbox{for all} \ t>0.
    \label{eq:estSchochet}
  \end{equation}    
\end{theorem}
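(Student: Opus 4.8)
The plan is to follow Schochet's filtering-and-averaging strategy, whose sharp form on the torus produces exactly the asymmetric bounds $\mcal{O}(\veps^2)$ for the density and $\mcal{O}(\veps)$ for the velocity in \eqref{eq:estSchochet}. I take for granted the uniform-in-$\veps$ a priori bounds: on a time interval $[0,T]$ independent of $\veps$, the solution $U=(\rho,\mbu)^T$ of \eqref{eq:NondimEuler} and the limit $\bar U$ of \eqref{eq:Psys}--\eqref{eq:PsysIC} stay bounded in some $H^s(\mbb{T}^d)$ with $s$ large and $\rho$ bounded away from $0$; this is the content of \cite{klainerman-majda,schochet} and is what renders the nonlinear terms controllable. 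It is convenient first to pass to the rescaled, symmetrising variable $\sigma$ with $\rho-\bar\rho=\veps\,\sigma$, consistent with the multiscale ansatz \eqref{eq:f_ansatz}, so that \eqref{eq:NondimEuler} takes the form $\Dt V+\tfrac{1}{\veps}\mcal{L}_0 V=\mcal{Q}(V)$, where $V=(\sigma,\mbu)^T$, $\mcal{L}_0$ is the constant-coefficient acoustic operator---skew-adjoint on \solspc, with $\ker\mcal{L}_0=\mcal{E}$ and $\mbb{P}$ the associated orthogonal projection---and $\mcal{Q}$ is a smooth, $\mcal{O}(1)$ quadratic nonlinearity. (The only genuinely singular part of $\tfrac{1}{\veps^2}\mcal{L}$ from \eqref{eq:Notn1} is captured by $\tfrac{1}{\veps}\mcal{L}_0$; its nonlinear remainder, the gradient of the higher-order enthalpy terms, stays bounded and is moreover annihilated by $\mbb{P}$.)

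Next I would filter out the fast oscillations by setting $W(t):=e^{t\mcal{L}_0/\veps}V(t)$, which satisfies $\Dt W=e^{t\mcal{L}_0/\veps}\mcal{Q}\!\left(e^{-t\mcal{L}_0/\veps}W\right)=:\mcal{Q}^\veps(t,W)$. Since $e^{t\mcal{L}_0/\veps}$ is unitary, $\mcal{Q}^\veps$ is uniformly bounded and almost periodic in its explicit time slot, oscillating at the frequencies $\lambda/\veps$ where $\lambda$ runs over the nonzero eigenvalues of $i\mcal{L}_0$---on $\mbb{T}^d$ these equal $\pm c\abs{k}$, $k\in\mbb{Z}^d\setminus\{0\}$, hence are bounded away from $0$. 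Its time-average $\overline{\mcal{Q}}(W):=\lim_{S\to\infty}\tfrac{1}{S}\int_0^S\mcal{Q}^\veps(\tau,W)\,\dd\tau$ keeps only the resonant quadratic interactions, and a direct computation shows that on $\mcal{E}=\ker\mcal{L}_0$ it agrees, up to an $\mcal{O}(\veps)$ perturbation, with $\mbb{P}\mcal{H}$; thus the averaged dynamics is, to leading order, the projected subsystem \eqref{eq:Psys}. Since $\overline{\mcal{Q}}$ maps $\mcal{E}$ into $\mcal{E}$ and $\bar U(0)=\hat U_0\in\mcal{E}$, the limit solution $\bar U$ stays in $\mcal{E}$, so $e^{-t\mcal{L}_0/\veps}\bar U(t)=\bar U(t)$ and $\bar U$ is unaffected by the filtering.

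The heart of the matter is then the averaging estimate $\norm{W(t)-\bar U(t)}=\mcal{O}(\veps)$ on $[0,T]$. Writing $\mcal{Q}^\veps=\overline{\mcal{Q}}+\big(\mcal{Q}^\veps-\overline{\mcal{Q}}\big)$, the mean-zero oscillatory remainder, once integrated in $t$, gains a factor $\veps$ by a non-stationary-phase argument---each mode carries a phase $e^{i\lambda\tau/\veps}$ with $\abs{\lambda}$ bounded below, so integration by parts gives $\int_0^t e^{i\lambda\tau/\veps}(\cdots)\,\dd\tau=\mcal{O}(\veps)$ provided (from the a priori $H^s$ bounds, read off the equation) that the slowly-varying amplitudes have bounded time derivative---while the resonant part $\overline{\mcal{Q}}$ is Lipschitz on the relevant ball, so Gr\"onwall closes the estimate, absorbing also the $\mcal{O}(\veps)$ discrepancy between $\overline{\mcal{Q}}|_{\mcal{E}}$ and $\mbb{P}\mcal{H}$. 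Finally, well-preparedness enters only through the initial datum: $V(0)-\bar U(0)$ reduces, in these variables, to $(\Id-\mbb{P})V(0)$, whose $L^2$-norm is at most $\mcal{O}(\veps^{-1}\norm{\rho_0-\hat\rho_0})+\mcal{O}(\norm{\mbu_0-\hat\mbu_0})=\mcal{O}(\veps)$ by the hypothesis of \eqref{eq:estSchochet}. Undoing the filtering ($e^{-t\mcal{L}_0/\veps}$ unitary) and the $\veps$-rescaling of the density then yields $\norm{\mbu(t)-\bar\mbu(t)}=\mcal{O}(\veps)$ and, thanks to the extra power of $\veps$ carried by the density fluctuation, $\norm{\rho(t)-\bar\rho(t)}=\mcal{O}(\veps^2)$, which is \eqref{eq:estSchochet}. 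I expect the non-stationary-phase step---made uniform in $\veps$ and in $t\in[0,T]$---to be the main obstacle: it rests both on the spectral gap of $\mcal{L}_0$ at the origin and on the uniform-in-$\veps$ regularity of the solutions, the latter being itself the nontrivial a priori estimate of \cite{klainerman-majda} that must be invoked at the outset.
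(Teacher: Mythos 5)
The paper does not actually prove Theorem~\ref{ThmSchochet}: it is quoted from \cite{schochet} (as restated in \cite{dellacherie}), so there is no in-paper argument to compare against. Your sketch is, in outline, the filtering-and-averaging proof that Schochet gives, so the strategy is the right one: rescale the density fluctuation, conjugate by the fast acoustic group, average, and close with Gr\"onwall, with the well-preparedness entering only through the initial datum. Two points deserve flagging. First, your non-stationary-phase step is stated too strongly: the individual eigenvalues of $i\mcal{L}_0$ on the torus are indeed bounded away from zero, but the phases appearing in the filtered \emph{quadratic} nonlinearity are the combinations $\pm\abs{k_1}\pm\abs{k_2}\mp\abs{k_1+k_2}$, which vanish on a nontrivial resonant set (e.g.\ collinear wavevectors); these resonances are exactly what survive in $\overline{\mcal{Q}}$, and the assertion that $\overline{\mcal{Q}}$ restricted to $\mcal{E}$ reduces to $\mbb{P}\mcal{H}$ is a genuine computation (the fast--fast contribution to the slow component is a gradient, annihilated by $\mbb{P}$), not an $\mcal{O}(\veps)$ afterthought. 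Second, for the well-prepared implication actually asserted in \eqref{eq:estSchochet} the full averaging machinery is more than is needed: since the fast component is $\mcal{O}(\veps)$ initially and the singular operator is skew-adjoint in the scaled norm \eqref{eq:newNorm}, a direct energy estimate on the difference plus Gr\"onwall already yields the conclusion on the uniform-in-$\veps$ (but finite) existence interval $[0,T]$; correspondingly, the ``for all $t>0$'' in the statement should be read as ``for all $t$ in the common interval of existence'', since the a priori bounds of \cite{klainerman-majda,schochet} that your argument invokes at the outset are only local in time.
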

\begin{remark}
  It is easy to see that the projected subsystem
  \eqref{eq:Psys}-\eqref{eq:PsysIC} is equivalent to the incompressible
  Euler system \eqref{eq:incomp}. The essence of the above theorem is
  that when the initial data are well prepared, i.e.\ they are taken
  in $\mcal{E}$, solutions of the compressible Euler system
  \eqref{eq:comp} approximate those of the incompressible Euler
  system \eqref{eq:incomp} when $\veps\to0$. The theorem, clearly, is
  in agreement with the results obtained in \cite{klainerman-majda,
    klainermanMajda82}.    
\end{remark}
\subsection{Zero Mach Number Limit of the Linear Wave Equation System}  
\label{sec:zero-mach-number-1}

In the rest of this section, we consider the linear wave equation
system as a prototype model of the compressible Euler equations
\eqref{eq:NondimEuler}, and present an analogous analysis of the zero
Mach number limit. Linearising the system of equations
\eqref{eq:NondimEuler} about a constant state $(\br,\bu)$, we get the
following wave equation system with advection:   
\begin{equation}
  \label{eq:WaveHL}
  \Dt U + H(U) + \frac{1}{\veps^2} L(U) = 0,	
\end{equation}
where
\begin{align}
  U := \begin{pmatrix}
    \rho\\
    \mbu
  \end{pmatrix} , \ 
  H(U) := \begin{pmatrix}
    \bu \cdot \grd \rho \\
    (\bu \cdot \grd) \mbu 
  \end{pmatrix}, \ 
  L(U) := \begin{pmatrix}
    \veps^2 \br \dvg \mbu \\
    \frac{\ba^2}{\br} \grd \rho
  \end{pmatrix}. \label{eq:Notn2}
\end{align}

Before carrying out an analysis of the solutions of \eqref{eq:WaveHL},
we introduce a few definitions. For any $U_1 = (\rho_1,\mbu_1)$ and
$U_2 = (\rho_2,\mbu_2)$ in $\solspc$, we define the inner-product  
\begin{equation}
  \label{eq:U1U2ip}
  (U_1,U_2):=\frac{\ba^2}{\br \veps^2}\langle\rho_1,\rho_2\rangle+\br
  \langle \mbu_1,\mbu_2\rangle,
\end{equation}
where the inner-products appearing on the right hand side are the usual
$L^2$ inner-products. The norm generated by the above inner-product is
given by  
\begin{equation}
  \label{eq:newNorm}
  \Norm{U} := \left[ \frac{\ba^2}{\br \veps^2} \norm{\rho}^2 + \br
    \norm{\mbu}^2 \right]^\frac{1}{2}.
\end{equation}
Clearly, the norm $\Norm{\cdot}$ is equivalent to the usual norm in
$\solspc$. We also consider the energies
\begin{equation}
  \label{eq:engys_defn}
  E       := \Norm{U}^2,  \ 
  E_{in} := \Norm{\bar{U}}^2,  \ 
  E_{ac} := \Norm{\tilde{U}}^2, 
\end{equation}
which are, respectively, the total energy,  the incompressible energy,
and the acoustic or compressible energy. Since $\mcal{E}
\perp \tilde{\mcal{E}}$, we have $ E = E_{in} + E_{ac}$. In the
following, we state the corresponding linearised version of Schochet's
result, i.e.\ Theorem~\ref{ThmSchochet}. 
\begin{theorem}[\cite{dellacherie}] 
  \label{propLin}
  Let $U$ be a solution of the IVP for the wave equation system
  \eqref{eq:WaveHL}, i.e.\ 
  \begin{align} 
    \Dt U + H(U) + \frac{1}{\veps^2} L(U) & = 0, \ t>0, \
                                            \uu{x}\in\mbb{T}^d, \label{eq:PropEqHL}\\
    U(0,\uu{x}) &= U_0(\uu{x}),	\ \uu{x}\in\mbb{T}^d, \label{eq:PropIcHL}
  \end{align}
  and let $\bar{U}$ be a solution of the IVP: 
  \begin{align}
    \Dt \bar{U} + H(\bar{U}) &= 0, \ t>0, \ \uu{x}\in\mbb{T}^d, \label{eq:PropEqH}\\
    \bar{U}(0,\uu{x}) &= \hat{U}_0(\uu{x}), \ \uu{x}\in\mbb{T}^d, \label{eq:PropIcH} 
  \end{align}
  where $\hat{U}_0:=\mbb{P}U_0$. Let $U=\hat{U}+\tilde{U}$ be the
  Helmholtz-Leray decomposition of $U$. Then the following holds.
  \begin{enumerate}[(i)]
  \item \label{cond1} 
    $\hat{U} = \bar{U}$, 
  \item \label{cond2}
    $\tilde{U}$ is the solution of \eqref{eq:PropEqHL} with
    initial condition $\tilde{U}_0:=(\mbb{I}-\mbb{P})U_0$.
  \end{enumerate}
  Moreover, there holds the energy conservation:
  \begin{equation}
    \label{eq:PropEnCons}
    E_{in}(t) = E_{in}(0)  \ \text{and} \  E_{ac}(t) = E_{ac}(0), \
    \text{for all} \ t>0,
  \end{equation}
  and as a consequence, the following estimate holds:
  \begin{equation}
   \norm{\rho_0-\hat{\rho}_0} = \mcal{O}(\veps^2), \ 
   \norm{\mbu_0-\hat{\mbu}_0} = \mcal{O}(\veps) 
   \implies
   \norm{\rho(t)-\bar{\rho}(t)} = \mcal{O}(\veps^2),  \ 
   \norm{\mbu(t)-\bar{\mbu}(t)} = \mcal{O}(\veps) 
   \ \mbox{for all} \ t>0.
    \label{eq:estSchochetLin}
  \end{equation}
\end{theorem}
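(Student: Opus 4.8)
The plan is to exploit the algebraic compatibility of the operators $H$ and $L$ with the Helmholtz--Leray splitting $\solspc = \mcal{E}\oplus\tilde{\mcal{E}}$; once this is in place, parts (i)--(ii), the energy identities, and the closeness estimate follow in turn with little extra work.

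First I would record four elementary facts, each a one-line consequence of the definitions of $\mcal{E}$, $\tilde{\mcal{E}}$ together with integration by parts on $\mbb{T}^d$ (equivalently, a Fourier-multiplier computation): (a) $L(U)=0$ for every $U\in\mcal{E}$, since there $\grd\rho=0$ and $\dvg\mbu=0$; (b) $\mathrm{range}(L)\subseteq\tilde{\mcal{E}}$, since the first component of $L(U)$ is a divergence, hence of zero mean, and the second is a gradient, hence curl-free; (c) both $\mcal{E}$ and $\tilde{\mcal{E}}$ are invariant under $H$, because $H=(\bu\cdot\grd)\,\Id$ acts as one and the same constant-coefficient scalar transport operator on all $1+d$ components and therefore commutes with $\grd$, $\dvg$, $\nabla\times$ and with spatial averaging; and (d) $H$ and $L$ are skew-symmetric for the weighted inner product \eqref{eq:U1U2ip}, i.e.\ $(U,H(U))=0$ and $(U,L(U))=0$ — the former because $\langle\rho,\bu\cdot\grd\rho\rangle$ and $\langle\mbu,(\bu\cdot\grd)\mbu\rangle$ are integrals of perfect divergences, the latter because $\langle\rho,\dvg\mbu\rangle=-\langle\grd\rho,\mbu\rangle$, so the two terms making up $(U,L(U))$ cancel. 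From (b)--(c) one gets in particular that $\mbb{P}$ commutes with $H$ and that $\mbb{P}L=0$.

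Next I would substitute the decomposition $U=\hat{U}+\tilde{U}$ into \eqref{eq:PropEqHL}. By (a), $L(U)=L(\tilde{U})$, and by (b)--(c) the $\mcal{E}$- and $\tilde{\mcal{E}}$-parts of the equation decouple: projecting with $\mbb{P}$ gives $\Dt\hat{U}+H(\hat{U})=0$, $\hat{U}(0)=\mbb{P}U_0=\hat{U}_0$, while projecting with $\mbb{I}-\mbb{P}$ gives $\Dt\tilde{U}+H(\tilde{U})+\veps^{-2}L(\tilde{U})=0$, $\tilde{U}(0)=(\mbb{I}-\mbb{P})U_0=\tilde{U}_0$ (here one uses that $\mcal{E}$ and $\tilde{\mcal{E}}$, being defined by linear differential constraints, are preserved by $\Dt$). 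The first problem is exactly \eqref{eq:PropEqH}--\eqref{eq:PropIcH}, so by uniqueness for this constant-coefficient transport system $\hat{U}=\bar{U}$, which is (i); the second is the assertion (ii) verbatim. The energy identities then drop out: along the flow, $\frac{\dd}{\dd t}E_{in}=2(\hat{U},\Dt\hat{U})=-2(\hat{U},H(\hat{U}))=0$ and $\frac{\dd}{\dd t}E_{ac}=2(\tilde{U},\Dt\tilde{U})=-2(\tilde{U},H(\tilde{U}))-2\veps^{-2}(\tilde{U},L(\tilde{U}))=0$ by (d), which is \eqref{eq:PropEnCons}; summing gives $E(t)=E(0)$.

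Finally, for \eqref{eq:estSchochetLin} I would simply unwind the norm. Since $\rho_0-\hat{\rho}_0=\tilde{\rho}_0$ and $\mbu_0-\hat{\mbu}_0=\tilde{\mbu}_0$, the hypotheses give $E_{ac}(0)=\frac{\ba^2}{\br\veps^2}\norm{\tilde{\rho}_0}^2+\br\norm{\tilde{\mbu}_0}^2=\mcal{O}(\veps^2)$; by the conservation just proved, $E_{ac}(t)=\mcal{O}(\veps^2)$ uniformly in $t$, whence $\norm{\tilde{\rho}(t)}=\mcal{O}(\veps^2)$ and $\norm{\tilde{\mbu}(t)}=\mcal{O}(\veps)$; and since $\bar{U}=\hat{U}$ by (i), $\rho(t)-\bar{\rho}(t)=\tilde{\rho}(t)$ and $\mbu(t)-\bar{\mbu}(t)=\tilde{\mbu}(t)$, which is the claimed estimate. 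The only genuinely delicate step is the bookkeeping in (c)--(d): one must check that every integration by parts is legitimate (periodicity, and enough regularity of the solution $U$) and that $\mbb{P}$ commutes with $H$ exactly; once those are nailed down the rest is routine.
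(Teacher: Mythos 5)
Your proof is correct and is essentially the argument this theorem is meant to have: the paper itself only cites \cite{dellacherie} without reproducing a proof, but the machinery it sets up in Section~\ref{sec:zero-mach-number-1} (the weighted inner product \eqref{eq:U1U2ip}, which makes $L$ skew-adjoint, the energies \eqref{eq:engys_defn}, and the orthogonal splitting \eqref{eq:HelmLer}) exists precisely to run the decoupling-plus-energy-conservation argument you give. Your facts (a)--(d), the resulting commutation $\mbb{P}H=H\mbb{P}$ and annihilation $\mbb{P}L=0$, and the final unwinding of $E_{ac}$ into the $\mcal{O}(\veps^2)$ and $\mcal{O}(\veps)$ bounds on $\tilde{\rho}$ and $\tilde{\mbu}$ all check out.
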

\begin{remark}
  Carrying out an analogous asymptotic analysis using the ansatz
  \eqref{eq:f_ansatz} for the wave equation yields the system:
  \begin{align}
    \Dt \mbu_{(0)}+(\bu\cdot\nabla)
    \mbu_{(0)}+\frac{\ba^2}{\br}\nabla\rho_{(2)}&=0, \label{eq:waveU0} \\
    \nabla\cdot \mbu_{(0)}&=0 \label{eq:waveDivU0}
  \end{align}
  for the unknowns $(\rho_{(2)},\mbu_{(0)})$, analogous to
  \eqref{eq:incomp}. Since the boundary conditions are periodic, and
  \eqref{eq:waveU0} is linear, the divergence-free condition
  \eqref{eq:waveDivU0} on $\mbu_{(0)}$ now forces $\rho_{(2)}$ to be a
  constant. Hence, $\rho\equiv\const$, and \eqref{eq:waveU0} then
  reduces to a linear transport equation for $\mbu_{(0)}$, and the
  divergence-free condition \eqref{eq:waveDivU0} is now required only
  at time $t=0$. Thus, the projected subsystem
  \eqref{eq:PropEqH}-\eqref{eq:PropIcH} and the system 
  \eqref{eq:waveU0}-\eqref{eq:waveDivU0} are equivalent, and are
  incompressible.  
\end{remark}

Theorem~\ref{propLin} lies at the centre of the analysis of numerical
schemes presented in \cite{dellacherie}. However, numerical
discretisation procedures introduce numerical diffusion, dispersion or
higher order derivative terms, depending on the order of the scheme
under consideration. Hence, the theorem has to be generalised to
accomodate more general spatial differential operators arising from
the discretisation. Such a generalisation is presented in
\cite{dellacherie} which also gives a sufficient condition to ensure
the estimate \eqref{eq:estSchochetLin}.     
   
\begin{theorem}[\cite{dellacherie}]
  \label{ThmDel}
  Let $U$ be a solution of the IVP:
  \begin{align} 
    \Dt U + \mcal{F}_{\uu{x}} U &= 0, \ t>0, \
    \uu{x}\in\mbb{T}^d, \label{eq:mThmEq}  \\
    U(0,\uu{x}) &= U_0(\uu{x}), \ \uu{x}\in\mbb{T}^d, \label{eq:mThmIc} 
  \end{align}
  which is assumed to be well-posed in
  $L^\infty\left([0,\infty);\solspc\right)$, with $\mcal{F}_{\uu{x}}$ a
  linear spatial differential operator. Then the following conclusions
  hold.
  \begin{enumerate}[(i)]
  \item \label{ThmDelCond1}
    The solution $U$ satisfies the estimate 
    \begin{equation} \label{eq:ThmDelEst1}
      \norm{U_0-\mbb{P}U_0} = \mcal O(\veps) \implies
      \norm{U(t)-\bar{U}(t)} = \mcal O(\veps), \ \text{for all} \ t>0,  
    \end{equation}
    where $\bar{U}$ is a solution of \eqref{eq:mThmEq} with the initial
    condition $\bar{U}(0) = \mbb{P}U_0$. However, we don't have the 
    apriori estimate $\norm{U(t) - \mbb{P}U(t)} = \mcal O(\veps)$ for
    all $t>0$.
  \item \label{ThmDelCond2}
    When the operator $\mcal{F}_{\uu{x}}$ leaves $\mcal{E}$ invariant, i.e.\
    whenever $U_0\in\mcal{E}$ implies $U(t)\in\mcal{E}$ for all $t>0$,
    then $U$ satisfies the estimate \eqref{eq:ThmDelEst1}, and in
    addition we have 
    \begin{equation} 
      \label{eq:ThmDelEst2}
      \norm{U_0-\mbb{P}U_0} = \mcal O(\veps) \implies \norm{U(t) -
        \mbb{P}U(t)} = \mcal O(\veps)  \ \text{for all} \ t>0.
    \end{equation}
  \end{enumerate}
\end{theorem}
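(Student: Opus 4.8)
The plan is to reduce everything to the linear theory already established in Theorem~\ref{propLin} and its energy conservation \eqref{eq:PropEnCons}, exploiting that $\mcal F_{\uu x}$ is linear and that the problem is posed on the torus. First I would set $W(t):=U(t)-\bar U(t)$, where $\bar U$ solves the same equation \eqref{eq:mThmEq} with $\bar U(0)=\mbb P U_0$. By linearity of $\mcal F_{\uu x}$, $W$ solves the homogeneous problem $\Dt W + \mcal F_{\uu x} W = 0$ with $W(0)=U_0-\mbb P U_0=(\mbb I-\mbb P)U_0=\tilde U_0$. The hypothesis $\norm{U_0-\mbb P U_0}=\mcal O(\veps)$ thus says precisely $\norm{\tilde U_0}=\mcal O(\veps)$. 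To get part~\eqref{ThmDelCond1}, I would invoke well-posedness of \eqref{eq:mThmEq}-\eqref{eq:mThmIc} in $L^\infty([0,\infty);\solspc)$: the solution map $U_0\mapsto U(t)$ is a bounded linear operator with bound uniform in $t$ (this is the content of the $L^\infty$-in-time well-posedness assumption), so $\norm{W(t)}=\norm{U(t)-\bar U(t)}\le C\norm{\tilde U_0}=\mcal O(\veps)$ for all $t>0$, which is \eqref{eq:ThmDelEst1}. I would also include the brief remark explaining why one cannot in general upgrade this to $\norm{U(t)-\mbb P U(t)}=\mcal O(\veps)$: the acoustic component $\tilde U(t)$ need not stay $\mcal O(\veps)$ in the $\norm{\cdot}$-norm once a generic $\mcal F_{\uu x}$ (with numerical diffusion/dispersion) mixes scales, since $\norm{\cdot}$ and $\Norm{\cdot}$ differ by a factor $\veps$ on the density component, cf.\ \eqref{eq:newNorm}.

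For part~\eqref{ThmDelCond2}, the extra hypothesis is that $\mcal F_{\uu x}$ leaves $\mcal E$ invariant. The key observation is that this forces the solution operator to commute with the Helmholtz--Leray projection $\mbb P$ in the following sense: I would argue that invariance of $\mcal E$ together with invariance of $\tilde{\mcal E}$ implies $\mbb P U(t)$ solves \eqref{eq:mThmEq} with data $\mbb P U_0$. To see this, decompose $U(t)=\hat U(t)+\tilde U(t)$ along \eqref{eq:HelmLer}; I need that each summand separately solves the equation. Invariance of $\mcal E$ gives that the solution with data $\hat U_0=\mbb P U_0\in\mcal E$ stays in $\mcal E$, i.e.\ equals $\bar U(t)$; by the orthogonal-complement structure and linearity, the solution with data $\tilde U_0\in\tilde{\mcal E}$ is then $U(t)-\bar U(t)=W(t)$, and uniqueness forces $\hat U(t)=\bar U(t)$, $\tilde U(t)=W(t)$, hence $\mbb P U(t)=\bar U(t)$ for all $t$. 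The one point needing care is whether $\tilde{\mcal E}$ is also invariant; I expect this to follow because $\mcal F_{\uu x}$, being the spatial part of a consistent discretisation of \eqref{eq:WaveHL}, is self-adjoint (or at least normal) with respect to a suitable inner product, so invariance of $\mcal E$ is equivalent to invariance of its orthogonal complement $\tilde{\mcal E}$. If the paper does not want to assume self-adjointness, the cleaner route is to observe that $\mcal E$ consists of stationary states ($\mcal F_{\uu x}$ annihilates constants-in-space-density, divergence-free velocities, since there is no pressure gradient and no divergence to act on — this is exactly why $\mcal E$ is invariant), so $\bar U(t)$ itself is constant in time when restricted appropriately, and in any case $\mbb P U(t) = \mbb P \bar U(t) + \mbb P W(t) = \bar U(t) + \mbb P W(t)$.

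Granting $\mbb P U(t)=\bar U(t)$, the estimate \eqref{eq:ThmDelEst2} is immediate: $\norm{U(t)-\mbb P U(t)}=\norm{U(t)-\bar U(t)}$, which is exactly the quantity bounded by $\mcal O(\veps)$ in part~\eqref{ThmDelCond1}. So the whole of part~\eqref{ThmDelCond2} rests on the identity $\mbb P U(t)=\bar U(t)$, and that identity rests on the invariance of $\mcal E$ propagating through the linear flow. I expect the main obstacle to be making the commutation $\mbb P\circ(\text{flow})=(\text{flow})\circ\mbb P$ rigorous without over-assuming on $\mcal F_{\uu x}$: one must either assume $\tilde{\mcal E}$ is invariant as part of the hypothesis, or derive it, and the honest derivation uses either a symmetry of $\mcal F_{\uu x}$ or the fact that the flow is an isometry in $\Norm{\cdot}$ (energy conservation), after which orthogonal decomposition does the rest. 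Everything else — the bounded-solution-operator estimate, the linearity manipulations, the final substitution — is routine.
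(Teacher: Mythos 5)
The paper itself gives no proof of Theorem~\ref{ThmDel}; it is stated as a quotation from \cite{dellacherie}, so your argument can only be judged against the intended one. Your part~(\ref{ThmDelCond1}) is fine: $W=U-\bar U$ solves the homogeneous problem with data $(\mbb I-\mbb P)U_0$, and the uniform-in-time bound on the linear solution operator supplied by well-posedness in $L^\infty\left([0,\infty);\solspc\right)$ gives $\norm{W(t)}\le C\,\norm{(\mbb I-\mbb P)U_0}=\mcal O(\veps)$.

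Part~(\ref{ThmDelCond2}) as you have structured it has a genuine gap. Your main line rests on the identity $\mbb P U(t)=\bar U(t)$, i.e.\ on the flow commuting with $\mbb P$, and that requires invariance of $\tilde{\mcal E}$ in addition to invariance of $\mcal E$. The hypothesis gives only the latter, and none of your proposed rescues is available: $\mcal{F}_{\uu{x}}$ is not assumed self-adjoint or normal (the theorem is meant to apply to modified-equation operators carrying upwind diffusion, which are neither), the flow is not assumed to be a $\Norm{\cdot}$-isometry, and $\mcal E$ does not consist of stationary states --- the advection part $H$ in \eqref{eq:WaveHL} maps $\mcal E$ into itself without annihilating it. The repair is simpler than what you attempted and is already latent in your final identity: one does not need $\mbb P U(t)=\bar U(t)$, only $\mbb P\bar U(t)=\bar U(t)$, i.e.\ $\bar U(t)\in\mcal E$, which is exactly what $\mcal E$-invariance yields since $\bar U(0)=\mbb P U_0\in\mcal E$. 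Then
\[
U(t)-\mbb P U(t)=(\mbb I-\mbb P)\bigl(U(t)-\bar U(t)\bigr)+(\mbb I-\mbb P)\bar U(t)=(\mbb I-\mbb P)\bigl(U(t)-\bar U(t)\bigr),
\]
and since $\mbb I-\mbb P$ is an orthogonal projection by \eqref{eq:HelmLer}, $\norm{U(t)-\mbb P U(t)}\le\norm{U(t)-\bar U(t)}=\mcal O(\veps)$ by part~(\ref{ThmDelCond1}). This also gives the operative explanation of the caveat in part~(\ref{ThmDelCond1}): without $\mcal E$-invariance $\bar U(t)$ may drift out of $\mcal E$, so $U(t)$ can acquire an $\mcal O(1)$ acoustic component while remaining $\veps$-close to $\bar U(t)$; the norm-rescaling remark you offer is not the mechanism.
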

\begin{remark}
  Note that in Theorem~\ref{ThmSchochet} and \ref{propLin} the
  estimate on the difference $\norm{\rho-\bar{\rho}}$ is
  $\mcal{O}(\veps^2)$, whereas in \cite{dellacherie} a scaled
  variable is used instead of $\rho$, and hence the corresponding
  estimate is $\mcal{O}(\veps)$.  
\end{remark}
\section{Asymptotic Preserving and Asymptotically Accurate Schemes} 
\label{sec:sec3}

As mentioned in the introduction, the AP property must be necessary
for any numerical scheme to survive in the passage of the limit
$\veps\to0$. However, it is wellknown from the literature, e.g.\
\cite{dellacherie, dellacherie-omnes-rieper, guillard}, that the
accuracy of a scheme can also deteriorate in the low Mach number
limit. Hence, it is essential for an AP scheme to be not only
consistent with the stiff limit but also to maintain its accuracy. In
addition, an AP scheme is expected to be accurate in the transient
regimes as well. The two primary aims of the present work are to
design and analyse, and test an IMEX-RK finite volume scheme which is
AP while maintaining the accuracy uniformly in $\veps$. In what
follows, we formally state the definitions of the AP property, and the
asymptotic accuracy (AA). The key principle behind the AA property is
an estimate which states that it is essential for a numerical solution
to be $\veps$-close to the incompressible solution as in the continuous
asymptotics. Following Jin \cite{Jin12}, we show how the AA property of
a scheme suffices to obtain the error estimates expected from an AP
scheme.              

\subsection{AP Property}
Numerical solution of singular perturbation problems is, in general, a
difficult task mainly because of the existence of multiple scales,
the dependence of the stability characteristics on the perturbation
parameters, and the transitional nature of the governing equations. A
classical numerical approximation scheme may not resolve the scales,
its stability requirements might deteriorate, and in the singular
limit the scheme might approximate a completely different set of
equations than the actual limiting system. AP schemes provide a robust
approach as a remedy against these problems. AP schemes were
originally developed in the context of kinetic transport equations;
see \cite{Degond13, jin-ap,Jin12}, and the references therein for more
details.  
\begin{definition}
\label{defn:AP}
  Let $\mcal{P}^{\veps}$ denote a singularly perturbed problem with the
  perturbation parameter $\veps$. Let $\mcal{P}^0$ denote the limiting
  system of $\mcal{P}^\veps$ when $\veps \to 0$. A discretisation
  $\mcal{P}_{h}^{\veps}$ of  $\mcal{P^{\veps}}$, with $h$ being the
  discretisation parameter, is called AP if 
  \begin{enumerate}[(i)]
    \item $\mcal{P}^{0}_h$ is a consistent discretisation of the
      problem $\mcal{P}^0$, called the asymptotic consistency, and
    \item the stability constraints on $h$ are independent of $\veps$,
      called the asymptotic stability.
  \end{enumerate} 
In other words, the following diagram commutes:
\begin{equation*}
    \begin{CD}
      \mcal{P}^{\veps}_{h} @>{h\to 0}>> \mcal{P}^{\veps}\\
      @VV{\veps\to 0}V  @VV{\veps\to 0}V \\
      \mcal{P}^{0}_{h} @>{h\to 0}>> \mcal{P}^0
    \end{CD} 
  \end{equation*}
\end{definition}
Jin in \cite{Jin12} has presented a systematic discussion regarding the
error estimates expected from an AP scheme. The solutions of the
continuous systems $\mcal{P}^{\veps}$ and  $\mcal{P}^{0}$ satisfy the
following error estimate:
\begin{equation} \label{eq:error_con_estimate}
\norm{\mcal{P}^{\veps} - \mcal{P}^{0}} = \mcal{O} (\veps).
\end{equation}
Suppose $\mcal{P}_{h}^{\veps}$ is a $p^{th}$-order accurate
approximation to $\mcal{P}^{\veps}$ for a fixed $\veps$ with
discretisation parameter $h$. Caused by the presence of the
singular perturbation parameter $\veps$, a classical numerical
approximation is typically expected to give the following error
estimate: 
\begin{equation}\label{eq:error_class_estimate}
e_{class} := \norm{\mcal{P}_{h}^{\veps} - \mcal{P}^{\veps} } = \mcal{O}
(h^p / \veps^s) , \ \mbox{for} \ 1 \leq s \leq p.
\end{equation}
If the scheme $\mcal{P}_{h}^{\veps}$ is  assumed to be AP, it may
give the following estimates 
\begin{align}
\norm{\mcal{P}_{h}^{\veps} - \mcal{P}_{h}^{0} }  &= \mcal{O} (\veps), \
\mbox{uniformly in} \ h \ \mbox{and} , \label{eq:error_ap_estimate_eps}\\
\norm{\mcal{P}_{h}^{0} - \mcal{P}^{0} }  &= \mcal{O}
                                           (h^p) \label{eq:error_ap_estimate_h}.
\end{align} 
Combining \eqref{eq:error_con_estimate},
\eqref{eq:error_ap_estimate_eps} and \eqref{eq:error_ap_estimate_h},
by using the triangle inequality, for an AP scheme we have the error
estimate:   
\begin{equation}\label{eq:error_ap_estimate}
e_{ap} := \norm{\mcal{P}_{h}^{\veps} - \mcal{P}^{\veps} } = \mcal{O}
(\veps + h^p) .
\end{equation}
Comparing the estimates \eqref{eq:error_ap_estimate} and
\eqref{eq:error_class_estimate}, it can be concluded that unlike a
classical scheme, the error $e_{ap}$ of an AP scheme remains 
bounded as $\veps \to 0$. The estimate
\eqref{eq:error_ap_estimate_eps}, which plays a crucial role to obtain
the error bound \eqref{eq:error_ap_estimate}, makes sure that the
numerical solution of the singular perturbation problem
$\mcal{P}^{\veps}$ remains $\veps$-close to the numerical solution of
the limit problem $\mcal{P}^{0}$, which is a property exhibited by the
exact solutions of $\mcal{P}^{\veps}$ and $\mcal{P}^{0}$. 

In a different context, Dellacherie in \cite{dellacherie} has
investigated the origin of inaccuracies exhibited by explicit
Godunov-type schemes, arising from the creation of spurious acoustic 
waves. The results of the above study reveal that the inaccuracies 
originate due to the inability of the schemes to maintain the estimate
\eqref{eq:error_ap_estimate_eps} even if the initial datum is
well-prepared. It has been shown in \cite{dellacherie}, cf.\ also 
Theorem~\ref{ThmDel}, that the $\mcal{E}$-invariance is a sufficient
condition for linear schemes to satisfy the estimate
\eqref{eq:error_ap_estimate_eps}, provided the initial datum is
well-prepared.  
\subsection{Asymptotic Accuracy}
\label{sec:aa-schemes}
A numerical scheme for the compressible Euler system should possess
the ability to maintain the solutions close to the incompressible
solutions in $\mcal{E}$ for all times, whenever the initial data are
well-prepared. It has to be noted that Theorem~\ref{ThmDel}, and its
conclusions are valid only for a linear scheme applied to the wave
equation system. A scheme maintaining the estimate
\eqref{eq:ThmDelEst2} from Theorem~\ref{ThmDel} was designated 
to be a low Mach accurate scheme in \cite{dellacherie}, and the
$\mcal{E}$-invariance is a sufficient condition at least in
the linear case. It should be taken into account that the estimates
\eqref{eq:ThmDelEst2} and \eqref{eq:error_ap_estimate_eps} are the
same in the case of the wave equation. Therefore, motivated by this
result, the discussion on AP schemes, and analogous considerations
from \cite{dellacherie}, we propose the asymptotic accuracy of a
scheme to be its ability to preserve the well-prepared space
$\mcal{E}$. 
\begin{definition}
  \label{defn:AA}
  A numerical approximation for the compressible Euler system
  \eqref{eq:comp} is said to be asymptotically accurate (AA), if it
  leaves the incompressible subspace $\mcal{E}$ invariant.  
\end{definition}
\begin{remark}
  Even though the AA  property is a sufficient condition to obtain the
  estimate \eqref{eq:error_ap_estimate_eps}, it is very useful as it
  gives an easy and verifiable criterion for the wave equation system,
  leading towards the AP property.  
\end{remark}
The estimate \eqref{eq:error_ap_estimate_h} for an AP scheme gives
$p^{th}$-order accuracy in the stiff limit $\veps\to0$. Thus, if the
solutions of the modified partial differential equation (MPDE)
resulting from a $p^{th}$-order linear scheme applied to the wave
equation leaves $\mcal{E}$ invariant, then such a scheme will avoid
inaccuracies. In addition, the scheme will reduce to a $p^{th}$-order
accurate discretisation of the limit system in the stiff limit.
\begin{remark}
  It has to be noted both the AP and AA properties are to be satisfied
  by the time semi-discrete as well as space-time fully-discrete
  schemes. In the following sections we establish that the IMEX-RK
  schemes considered in this paper possess both the AP and AA
  properties.    
\end{remark}
    
\section{Time Semi-discrete Scheme}  
\label{sec:sec4}
\subsection{Implicit-Explicit (IMEX) Runge-Kutta (RK) Time
  Discretisation} 
The IMEX-RK schemes are designed for the numerical integration of
stiff systems of ordinary differential equations (ODEs) of the form
\begin{equation}
  \label{eq:stiff_ODE}
  y^\prime = f(t,y) + \frac{1}{\veps} g(t,y),
\end{equation}
where $y\colon\mbb{R}\to\mbb{R}^n, \
f,g\colon\mbb{R}\times\mbb{R}^n\to\mbb{R}^n$, and $0<\veps\ll 1$ is
usually known as the stiffness parameter. The functions $f$ and $g$
are called, respectively, the non-stiff part and the stiff part of the
system \eqref{eq:stiff_ODE}. Stiff systems of ODEs of the type
\eqref{eq:stiff_ODE} typically arise in the modelling of semiconductor
devices, study of kinetic equations, theory of hyperbolic systems with
relaxation etc.; see, e.g.\ \cite{hairer-wanner-2}, for a comprehensive
treatment of such systems.

Let $y^n$ be a numerical solution of \eqref{eq:stiff_ODE} at
time $t^n$, and let $\Dlt$ denote a fixed timestep. An $s$-stage
IMEX-RK scheme, cf.,\ e.g.\ \cite{AscherRuuthSpiteri,
  pareschi-russo-conf}, updates $y^n$ to $y^{n+1}$ through $s$
intermediate stages:  
\begin{align}
  Y_i &= y^n + \Dlt \sum\limits_{j=1}^{i-1}\tilde{a}_{i,j} f(t^n + \tilde{c}_j\Dlt, Y_j) + 
        \Dlt\sum \limits_{j=1}^s a_{i,j} \frac{1}{\veps}g(t^n + c_j
        \Dlt,Y_j), \ 1 \leq i \leq s, \label{eq:imex_Yi} \\
  y^{n+1} &= y^n  + \Dlt \sum\limits_{i=1}^{s}\tilde{\omega}_{i} f(t^n
            + \tilde{c}_i\Dlt, Y_i) + \Dlt\sum \limits_{i=1}^s
            \omega_{i}\frac{1}{\veps} g(t^n +
            c_i\Dlt,Y_i). \label{eq:imex_yn+1} 
\end{align}
Let us denote $\tilde{A} = (\tilde{a}_{i,j})$, $ A= (a_{i,j}) $,  $\tilde{c} =
(\tilde{c}_1, \tilde{c}_2,\ldots,\tilde{c}_s), \ c = (c_1, c_2, \ldots,
c_s), \ \tilde{\omega}= (\tilde{\omega}_1, \tilde{\omega}_2,
\ldots,\tilde{\omega}_s)$ and $\omega = (\omega_1, \omega_2, \ldots 
,\omega_s)$. The above IMEX-RK scheme
\eqref{eq:imex_Yi}-\eqref{eq:imex_yn+1} can be symbolically represented by
the double Butcher tableau: 
\begin{figure}[htbp]
  \centering
  \begin{tabular}{c|c}
    $\tilde{c}^T$	&$\tilde{A}$\\
    \hline 
			&$\tilde{\omega}^T$
  \end{tabular}
  \quad
  \begin{tabular}{c|c}
    $c^T$	&$A$\\
    \hline 
		&$\omega^T$
  \end{tabular}
  \caption{Double Butcher tableau of an IMEX-RK scheme.}
  \label{fig:butcher_tableau}
\end{figure}

The matrices $A$ and $\tilde{A}$ are $ s\times s$ matrices such that
resulting scheme is explicit in $f$  and implicit in $g$. However, in
order to reduce the number of implicit evaluations in the
intermediate stages \eqref{eq:imex_Yi}, we consider only the
so-called diagonally implicit Runge-Kutta (DIRK) schemes in which
$\tilde{a}_{i,j}=0$ for $ j \geq i$, and $a_{i,j}=0$ for $j>i$.  The
coefficients $\tilde{c}_i$ and $c_i$, and the weights
$\tilde{\omega}_i$ and $\omega_i$ are fixed by the order conditions;
see, e.g.\ \cite{hairer-wanner-2,pareschi-russo-conf, pareschi-russo},
for details. For the sake of completion, and to draw reference for the
analysis carried out later, we write down the order conditions for a
first order and second order IMEX-RK scheme as follows: 
\begin{gather}
  \tilde{c}_i = \sum_{j=1}^{i-1}\tilde{a}_{i,j}, \quad c_i = \sum_{j=1}^i
  a_{i,j}, \label{eq:IMEX-RK-consistency}\\ 
\sum_{i=1}^s \tilde{\omega}_i = 1, \quad \sum_{i=1}^s \omega_i =
1, \label{eq:IMEX-RK-1storder} \\ 
\sum_{i=1}^s \tilde{\omega}_i \tilde{c}_i = \frac{1}{2},  \quad
\sum_{i=1}^s \omega_i c_i = \frac{1}{2}, \quad 
\sum_{i=1}^s \tilde{\omega}_i c_i = \frac{1}{2}, \quad
\sum_{i=1}^s \omega_i\tilde{c}_i =
\frac{1}{2}. \label{eq:IMEX-RK-2ndorder}  
\end{gather}
The conditions \eqref{eq:IMEX-RK-consistency},
\eqref{eq:IMEX-RK-1storder} and \eqref{eq:IMEX-RK-2ndorder} are,
respectively, the consistency, the first-order, and the second-order
order conditions.
\begin{definition}
\label{eq:GSA}
An IMEX-RK scheme with the Butcher tableau given in
Figure~\ref{fig:butcher_tableau} is said to be globally stiffly
accurate (GSA), if
\begin{equation} 
  \tilde{a}_{s,j} = \tilde{\omega}_{j},  \quad a_{s,j} = \omega_{j}  \ \mbox{for all} \  j = 1,
\ldots, s.  
\end{equation}
\end{definition}
 To further simplify the analysis of the schemes
presented in this paper, we restrict ourselves only to two types of
DIRK schemes, namely the type-A and type-CK schemes which are defined
below; see \cite{boscarino-sinum2007,KENNEDY2003139} for more details.
\begin{definition}
\label{eq:typeA_CK}
An IMEX-RK scheme with the Butcher tableau given in
Figure~\ref{fig:butcher_tableau} is said to be of 
\begin{itemize}
\item type-A, if the matrix $A$ is invertible; 
\item type-CK, if the matrix $A \in \mbb{R}^{s \times s}, \ s \geq 2$,
  can be written as  
\begin{equation*}  
  A = 
  \begin{pmatrix}
    0 & 0 \\
    \alpha & A_{s-1 }
  \end{pmatrix},
\end{equation*}
where $\alpha \in \mbb{R}^{s-1} $ and $A_{s-1} \in \mbb{R}^{s-1 \times
s-1}$ is invertible.
\end{itemize} 
\end{definition}
In the results presented in the later sections, we have used
predominantly the first order Euler(1,1,1), and second order
ARS(2,2,2) schemes for time discretisations; see the Appendix for the
Butcher tableaux of the several variants of the IMEX-RK schemes we
have considered in our numerical case studies. Here, in the triplet
$(s,\sigma,p)$, $s$ is the number of stages of the implicit part, the
number $\sigma$ gives the number of stages for the explicit part and
$p$ gives the overall order of the scheme. For a detailed account of
IMEX-RK schemes we refer the reader to
\cite{AscherRuuthSpiteri,boscarino-sinum2007,KENNEDY2003139,
  pareschi-russo-conf, pareschi-russo} and the references therein.     

\subsection{IMEX-RK Time Discretisation of the Euler System} 
Based on the asymptotic analysis and the convergence results presented
in Section~\ref{sec:sec2}, we can split the flux functions in the Euler
system \eqref{eq:comp} into a stiff and a non-stiff part. Denoting by
$W=(\rho,\mbq)^T$, the vector of conservative variables, the fluxes
are split via 
\begin{equation}
  \label{eq:flux_split}
  G(W) := \begin{pmatrix}
    \mbq \\
    \frac{p(\rho)}{\veps^2}\mbb{I}_2 
  \end{pmatrix}, \quad 
  F(W) := \begin{pmatrix}
    \uu{0} \\
    \frac{\mbq \otimes \mbq}{\rho}
  \end{pmatrix},
\end{equation}
where $\mbq = \rho\mbu$ is the momentum. Applying the IMEX-RK time 
discretisation, i.e.\ treating $F$ explicitly and $G$ implicitly,
results in the following semi-discrete scheme.  
\begin{definition}
The $k^{th}$ stage of an $s$-stage IMEX-RK scheme for the Euler system 
\eqref{eq:comp} is defined as
\begin{align}
  \rho^{k} &= \rho^{n} - \Dlt a_{k,l} \nabla\cdot \mbq^{l}, \label{eq:rho_ieu_sd_k} \\  
  \mbq^{k} &=  \mbq^{n} - \Dlt \tilde{a}_{k,\ell} \nabla\cdot\left(\frac{\mbq^{\ell} \otimes
            \mbq^{\ell}}{\rho^{\ell}}\right) - \Dlt  a_{k,l}
            \frac{\nabla p\left(\rho^{l}\right)}{\veps^2},  \  k = 1,\ldots, s.
                         \label{eq:rhou_ieu_sd_k}
\end{align}  
The numerical solution at time $t^{n+1}$ is given by 
\begin{align}
  \rho^{n+1} &= \rho^{n} - \Dlt \omega_k \nabla\cdot \mbq^{k},
               \label{eq:rho_ieu_sd_n+1} \\ 
  \mbq^{n+1} &=  \mbq^{n} - \Dlt
            \tilde{\omega}_{k} \nabla\cdot\left(\frac{\mbq^{k} \otimes \mbq^{k}}{\rho^{k}}
            \right) - \Dlt \omega_{k} \frac{\nabla
            p\left(\rho^{k}\right)}{\veps^2}. \label{eq:rhou_ieu_sd_n+1}  
\end{align}
\end{definition}
In the above, and throughout the rest of this paper, we follow the
convention that a repeated index always denotes the summation with
respect to that index. The indices $\ell$ and $l$ are used to denote,
respectively, the summation in the explicit and implicit terms, i.e.\ they
assume values in the sets $\{1,2,\ldots,k-1\}$ and $\{1,2,\ldots,k\}$.  

Though the intermediate stages
\eqref{eq:rho_ieu_sd_k}-\eqref{eq:rhou_ieu_sd_k}  consist of two
implicit steps, the resolution of the scheme
\eqref{eq:rho_ieu_sd_k}-\eqref{eq:rhou_ieu_sd_k} is quite simple. As in
\cite{degond_tang}, we eliminate $q^{k}$ between
\eqref{eq:rho_ieu_sd_k} and \eqref{eq:rhou_ieu_sd_k}, and obtain the
nonlinear elliptic equation:
\begin{equation}
  -\frac{\Dlt^2 a_{k,k}^2}{\veps^2} \Delta p\left(\rho^{k}\right)+
  \rho^{k} = \hat{\rho}^{k} - \Dlt a_{k,k} \nabla \cdot
  \hat{\mbq}^{k} \label{eq:rho_ieu_k_elliptic} 
\end{equation}
for $\rho^{k}$. Here, we have denoted by
\begin{align}
\hat{\rho}^{k} &:= \rho^n - \Dlt a_{k,\ell} \nabla \cdot
                   \mbq^{\ell},  \label{eq:rhohat_ieu_sd_k}\\  
  \hat{\mbq}^{k} &:= \mbq^n - \Dlt \tilde{a}_{k,\ell} \nabla \cdot
                  \left(\frac{\mbq^{\ell} \otimes \mbq^{\ell}}{\rho^{\ell}}\right) - \Dlt
                  \frac{a_{k,\ell}}{\veps^2} \nabla
                  p\left(\rho^{\ell}\right)\label{eq:rhouhat_ieu_sd_k}   
\end{align}
those terms that can be explicitly evaluated. Once $\rho^{k}$ is
known after solving the elliptic equation
\eqref{eq:rho_ieu_k_elliptic}, $\mbq^{k}$ can be explicitly evaluated 
from \eqref{eq:rhou_ieu_sd_k}. Finally, the updates $\rho^{n+1}$ and
$\mbq^{n+1}$ are calculated explicitly using
\eqref{eq:rho_ieu_sd_n+1}-\eqref{eq:rhou_ieu_sd_n+1} with the values
obtained from the intermediate stages; see also
\cite{boscarino_weno_jcp_19, degond_tang,dimarco_loubere_jcp_2018,
  dimarco_loubere_siamjsc_2017,tang_kinrelmod_2012} for related
approaches.  
\begin{remark} 
  In order to further reduce the nonlinear nature of the elliptic
  equation \eqref{eq:rho_ieu_k_elliptic}, and to make the numerical
  implementation simpler, in our computations we transform the
  nonlinear elliptic equation for $\rho^{k}$ into a semi-linear
  elliptic equation for the pressure $p^{k}:=p(\rho^{k})$.  
\end{remark}

\section{Analysis of the Time Semi-discrete Scheme} 
\label{sec:sec5} 
The goal of this section is to establish the AP property and
asymptotic accuracy of the time semi-discrete IMEX-RK scheme
\eqref{eq:rho_ieu_sd_k}-\eqref{eq:rhou_ieu_sd_n+1} in sense of
Definitions~\ref{defn:AP} and \ref{defn:AA}.

\subsection{AP Property}
Establishing the AP property consists of showing the consistency of
the scheme with that of the incompressible system as $\veps\to0$, with
its stability requirements independent of $\veps$. In order to show
the former, we perform an asymptotic analysis of the time
semi-discrete scheme
\eqref{eq:rho_ieu_sd_k}-\eqref{eq:rhou_ieu_sd_n+1}, and to show the
latter, we use a linear stability analysis. 

\subsubsection{Asymptotic Consistency}
\label{sec:asympt-cons}
\begin{theorem}
  \label{Asym_cons_SD}
  Suppose that the data at time $t^{n}$ are well-prepared, i.e.\
  $\rho^{n}$ and $\mbu^{n}$ admit the decomposition: 
  \begin{align}
    \rho^n &= \rho_{(0)}^n + \veps^2 \rho_{(2)}^n , \label{eq:rho_wp_semi-disc} \\ 
    \mbu^n &= \mbu_{(0)}^n + \veps \mbu_{(1)}^n, \label{eq:u_wp_semi-disc} 
  \end{align}
  where $\nabla \rho^{n}_{(0)} = 0$ and $\nabla \cdot \mbu^{n}_{(0)} =
  0$. Then for a GSA scheme, if the intermediate solutions $\rho^{k}$
  and $\mbu^{k}$ defined by \eqref{eq:rho_ieu_sd_k} and
  \eqref{eq:rhou_ieu_sd_k} admit the decomposition
  \eqref{eq:f_ansatz}, then they must satisfy $\nabla \rho^{k}_{(0)} =
  0$ and $\nabla \cdot \mbu^{k}_{(0)}=0$, i.e.\ $\rho^k$ and $\mbu^k$
  are well-prepared as well. As a consequence, if the numerical
  solutions $\rho^{n+1}$ and $\mbu^{n+1}$ admit the decomposition
  \eqref{eq:f_ansatz}, then they are also well-prepared. In other
  words, the semi-discrete scheme
  \eqref{eq:rho_ieu_sd_k}-\eqref{eq:rhou_ieu_sd_n+1} is asymptotically
  consistent with the incompressible limit system in the sense of
  Definition~\ref{defn:AP}.  
\end{theorem}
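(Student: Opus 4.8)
The plan is to substitute the well-prepared ansatz \eqref{eq:f_ansatz} into the intermediate stage equations \eqref{eq:rho_ieu_sd_k}--\eqref{eq:rhou_ieu_sd_k} and collect powers of $\veps$, proceeding by induction on the stage index $k$. The base of the induction is the hypothesis \eqref{eq:rho_wp_semi-disc}--\eqref{eq:u_wp_semi-disc} on the data at $t^n$, together with the fact that $\rho^0 = \rho^n$, $\mbq^0 = \mbq^n$ in the type-CK/type-A convention. Assuming $\nabla\rho^\ell_{(0)}=0$ and $\nabla\cdot\mbu^\ell_{(0)}=0$ for all $\ell<k$, I would first look at the momentum equation \eqref{eq:rhou_ieu_sd_k}: the only term carrying a negative power of $\veps$ is $\Dlt a_{k,l}\nabla p(\rho^l)/\veps^2$. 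Writing $p(\rho^l)=p(\rho^l_{(0)})+\veps^2 p'(\rho^l_{(0)})\rho^l_{(2)}+\mcal O(\veps^4)$ and using $\nabla\rho^l_{(0)}=0$ for $l<k$ (induction hypothesis) one sees that the $\mcal O(\veps^{-2})$ contribution from the $l<k$ terms vanishes, so the $\mcal O(\veps^{-2})$ balance in \eqref{eq:rhou_ieu_sd_k} reduces to $a_{k,k}\nabla p(\rho^k_{(0)}) = 0$; since $a_{k,k}\neq 0$ (invertibility of $A$ or of $A_{s-1}$, Definition~\ref{eq:typeA_CK}) and $p$ is strictly monotone, this forces $\nabla\rho^k_{(0)}=0$.

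Having established $\nabla\rho^k_{(0)}=0$, the next step is to extract the divergence-free condition on $\mbu^k_{(0)}$ from the continuity equation \eqref{eq:rho_ieu_sd_k}. Inserting the ansatz, the $\mcal O(1)$ part reads $\rho^k_{(0)} = \rho^n_{(0)} - \Dlt a_{k,l}\nabla\cdot\mbq^l_{(0)}$; since $\rho^k_{(0)}$ and $\rho^n_{(0)}$ are both spatially constant, taking the gradient gives $a_{k,l}\nabla(\nabla\cdot\mbq^l_{(0)}) = 0$. With $\mbq^l_{(0)}=\rho^l_{(0)}\mbu^l_{(0)}$ and $\rho^l_{(0)}$ constant, and $\nabla\cdot\mbu^l_{(0)}=0$ for $l<k$ by induction, this collapses to $a_{k,k}\rho^k_{(0)}\nabla(\nabla\cdot\mbu^k_{(0)})=0$, hence $\nabla\cdot\mbu^k_{(0)}$ is constant; on the torus $\mbb T^d$ its integral vanishes, so $\nabla\cdot\mbu^k_{(0)}=0$. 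One must also check that $\mbu^k$ itself inherits the form $\mbu^k_{(0)}+\veps\mbu^k_{(1)}$ (i.e.\ no $\mcal O(\veps^{-1})$ velocity correction) — this comes from the $\mcal O(\veps^{-1})$ balance of \eqref{eq:rhou_ieu_sd_k}, which after the previous steps is automatically satisfied, consistent with the assumed decomposition \eqref{eq:f_ansatz} having no negative powers. This closes the induction, so all stages $\rho^k,\mbu^k$ are well-prepared.

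The final step is the update: since the scheme is GSA (Definition~\ref{eq:GSA}), $\omega_k = a_{s,k}$ and $\tilde\omega_k=\tilde a_{s,k}$, so $\rho^{n+1}=\rho^s$ and $\mbq^{n+1}=\mbq^s$ coincide with the last internal stage, and therefore $\rho^{n+1},\mbu^{n+1}$ are well-prepared by what was just proved; alternatively one repeats the same $\mcal O(\veps^{-2})$ and $\mcal O(1)$ balance on \eqref{eq:rho_ieu_sd_n+1}--\eqref{eq:rhou_ieu_sd_n+1} directly. To finish the identification with the incompressible limit in the sense of Definition~\ref{defn:AP}, I would then read off the $\mcal O(1)$ equations: \eqref{eq:rhou_ieu_sd_k} at order $\mcal O(1)$ gives $\mbu^k_{(0)} = \mbu^n_{(0)} - \Dlt\tilde a_{k,\ell}\,(\mbu^\ell_{(0)}\cdot\nabla)\mbu^\ell_{(0)} - \Dlt a_{k,l}\nabla p^{(l)}_{(2)}/\rho_{(0)}$ with an appropriately defined limiting second-order pressure, and \eqref{eq:rho_ieu_sd_n+1} forces $\omega_k\nabla\cdot\mbq^k_{(0)}=0$; this is precisely the IMEX-RK stage structure applied to the incompressible system \eqref{eq:incomp}, establishing asymptotic consistency.

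I expect the main obstacle to be the bookkeeping of the intermediate stages in the type-CK case, where the first stage is purely explicit ($a_{1,j}=0$) so the $\mcal O(\veps^{-2})$ argument degenerates at $k=1$; one has to argue separately that $\rho^1=\rho^n$ is already well-prepared and handle the index shift so that the invertibility of $A_{s-1}$ (rather than of the full $A$) is what delivers $a_{k,k}\neq 0$ for $k\geq 2$. A secondary subtlety is that the asymptotic consistency argument presupposes the intermediate and updated solutions \emph{admit} an expansion of the form \eqref{eq:f_ansatz} with no negative powers of $\veps$ — this is an assumption in the statement, not something proved, so the proof only needs to show that \emph{if} such an expansion exists then its leading terms are well-prepared and satisfy the discrete incompressible equations; care is needed to state this honestly rather than overclaiming.
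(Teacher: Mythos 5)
Your proposal is correct and follows essentially the same route as the paper's proof: substitute the ansatz, induct on the stage index, use the $\mcal{O}(\veps^{-2})$ balance of the momentum stage together with $a_{k,k}\neq 0$ to get spatially constant $\rho^k_{(0)}$, use the $\mcal{O}(1)$ mass balance plus integration over the (periodic) domain to get $\nabla\cdot\mbu^k_{(0)}=0$, and invoke the GSA property to identify the final update with the last stage. The only differences are cosmetic (you take a gradient of the mass balance before integrating, whereas the paper integrates directly and then substitutes back; your remarks on the degenerate type-CK first stage and on the ansatz being an assumption correspond to the paper's remarks surrounding the proof).
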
 
 \begin{proof}
   We use induction on the number of stages to prove the theorem. To
   begin with, we prove the consistency for the first stage, i.e.\ $k=1$,
   which corresponds to a fully implicit step. We plugin the ansatz
   \eqref{eq:f_ansatz} for each of the dependent variables in
   \eqref{eq:rho_ieu_sd_k}-\eqref{eq:rhou_ieu_sd_n+1}. Equating to zero
   the $\mcal{O}{(\veps^{-2})}$ terms in system
   \eqref{eq:rho_ieu_sd_k}-\eqref{eq:rhou_ieu_sd_k} yields
   \begin{equation}
     \nabla p \left( \rho^{1}_{(0)} \right) = 0 \label{eq:ieu_grad_rho_1_0} .
   \end{equation}
   Hence, the zeroth order density $\rho^{1}_{(0)}$ is spatially
   constant. In an analogous way, we can show that the first order density
   $\rho^{1}_{(1)}$ is also spatially constant. 

   Next, we consider  the $\mcal{O}(1)$ terms in the mass update
   \eqref{eq:rho_ieu_sd_k} to obtain  
   \begin{equation}
     \label{eq:ieu_mass_0_0}
     \frac{\rho^{1}_{(0)} - \rho^{n}_{(0)}}{\Dlt} = - a_{11} \rho^{1}_{(0)} \nabla \cdot
     \mbu^{1}_{(0)}.
   \end{equation}
   We Integrate the above equation \eqref{eq:ieu_mass_0_0} over a
   spatial domain $\Omega$ and use the Gauss' divergence theorem to
   get 
   \begin{equation}
     \label{eq:ieu_mass_0_0_inte}
     \abs{\Omega} \frac{\rho^{1}_{(0)} - \rho^{n}_{(0)}}{\Dlt} = - a_{11}
     \rho^{1}_{(0)} \int_{\Omega}\nabla \cdot \mbu^{1}_{(0)}d\uu{x} = - a_{11}
     \rho^{1}_{(0)} \int_{\D \Omega} \uu{\nu} \cdot \mbu^{1}_{(0)}d\sigma.  
   \end{equation}
   If the boundary conditions of the problem are periodic or wall,
   then the right most integral in the above equation vanishes,
   yielding $\rho^{1}_{(0)}=\rho^{n}_{(0)}$. As the zeroth order
   density $\rho^{n}_{(0)}$ is a constant, $\rho^{1}_{(0)}$ as well as
   $\rho^{1}_{(1)}$ are constants. Using this in equation
   \eqref{eq:ieu_mass_0_0}, we obtain the divergence condition:
   \begin{equation}
     \nabla \cdot \mbu^{1}_{(0)} = 0
   \end{equation} 
   for the leading order velocity $\mbu^{1}_{(0)}$. This completes the proof
   for $k = 1$. 

   To prove the result for $k = 2$ onwards, we rewrite the intermediate
   stages \eqref{eq:rho_ieu_sd_k}-\eqref{eq:rhou_ieu_sd_k} in the form
   \begin{align}
     \rho^{k} &= \hat{\rho}^k -\Dlt a_{k,k} \dvg
                \mbq^{k}, \label{eq:ieu_rho_k^_semi_disc} \\
     \mbq^{k}&=\hat{\mbq}^k - \Dlt \frac{a_{k,k}}{\veps^2}\nabla
            p(\rho^{k}). \label{eq:ieu_rhou_k^_semi_disc} 
   \end{align}
   Proceeding as in the case of $k = 1$, we can obtain $\rho^{k}_{(0)} =
   \rho^{n}_{(0)} $. Hence, for the $k^{th}$ stage, the zeroth order density
   $\rho^{k}_{(0)}$ is same as that of $\rho^n_{(0)}$, and in turn we also obtain
   the divergence condition: 
   \begin{equation}
     \nabla \cdot \mbu^{k}_{(0)} = 0
   \end{equation} 
   for $\mbu^{k}_{(0)}$.

   Since the scheme under consideration is GSA, the numerical
   solution at time $t^{n+1}$ is same as the solution at the $s^{th}$ stage. 
      Summarising the above steps, the asymptotic limit of scheme
   \eqref{eq:rho_ieu_sd_k}-\eqref{eq:rhou_ieu_sd_n+1} is given by
   \begin{equation}
     \label{eq:ieu_imex_SD_lim}
     \begin{aligned}
       \rho^{n+1}_{(0)} &= \const,\\ 
       \mbu^{n+1}_{(0)} &= \mbu^{n}_{(0)} - \Dlt \tilde{\omega}_{k} \nabla
       \cdot \left(\mbu^k_{(0)} \otimes \mbu^k_{(0)}\right) - \Dlt
       \omega_{k} \nabla p^{k}_{(2)},\\ 
       \nabla \cdot \mbu^{n+1}_{(0)} &= 0.  
     \end{aligned}
   \end{equation}
   Clearly, \eqref{eq:ieu_imex_SD_lim} is a consistent discretisation
   of the incompressible limit system \eqref{eq:incomp}. Hence, the
   semi-discrete scheme 
   \eqref{eq:rho_ieu_sd_k}-\eqref{eq:rhou_ieu_sd_n+1} is
   asymptotically consistent.    
\end{proof}
\begin{remark}
  Note that the above proof is valid for any $s$-stage GSA IMEX-RK
  scheme. However, it uses the fact that $a_{k,k} \neq 0 $ for
  $k = 1, 2,\ldots, s$, which is the property of an RK scheme of
  type-A. The result also holds for a type-CK GSA scheme in which the
  first step is trivial, and then on the proof follows similar lines
  as that of a type-A scheme.
\end{remark}

\subsubsection{Linearised $L^2$-stability Analysis}
\label{sec:line-l2-stab}

In this subsection, we analyse the correction terms arising from the
time discretisation, and their effect on the asymptotic stability of the
resulting scheme. In order to analyse these correction terms, we
follow the standard MPDE approach; see also \cite{guillard} for a
related analysis on stability.    

In order to establish the asymptotic stability of the IMEX-RK time
discretisation for the isentropic Euler system \eqref{eq:comp}, we
carry out a thorough linear $L^2$-stability analysis for the wave
equation system \eqref{eq:WaveHL} as a linearised model. As a result,
we obtain a sufficient condition for $L^2$-stability; see
\cite{ArunDasGuptaSamHyp2016} for the analysis of a first order IMEX
scheme. We believe that the results of linear $L^2$-stability analysis
holds good also for the nonlinear Euler system
\eqref{eq:comp}. Analogous to Section~\ref{sec:sec4}, an IMEX-RK time
discrete scheme for the linear wave equation system \eqref{eq:WaveHL}
is defined as follows.     
\begin{definition}
The $k^{th}$ stage of an $s$-stage IMEX-RK scheme for the wave
equation system \eqref{eq:WaveHL} is given by
\begin{align}
  \rho^{k} &= \rho^n - \Dlt \tilde{a}_{k,\ell}(\bu\cdot\nabla)\rho^{\ell}
               -\Dlt a_{k,l}\br\dvg \mbu^{l}, \label{eq:weq_rho_k_semi_disc} \\
  \mbu^{k}&=\mbu^n-\Dlt \tilde{a}_{k,\ell} (\bu\cdot\nabla)\mbu^{\ell} 
           - \Dlt a_{k,l}\frac{\ba^2}{\br\veps^2}\nabla\rho^{l}.
           \label{eq:weq_u_k_semi_disc}
\end{align}
The numerical solutions $\rho^{n+1}$ and $\mbu^{n+1}$ at time $t^{n+1}$
are defined as 
\begin{align}
  \rho^{n+1} &= \rho^n - \Dlt \tilde{\omega}_k (\bu\cdot\nabla)\rho^k
               -\Dlt \omega_k \br\dvg
               \mbu^{k}, \label{eq:weq_rho_semi_disc_finupd} \\ 
  \mbu^{n+1}&=\mbu^n-\Dlt \tilde{\omega}_k  (\bu\cdot\nabla)\mbu^k 
           - \Dlt \omega_k \frac{\ba^2}{\br\veps^2}\nabla\rho^{k}.
           \label{eq:weq_u_semi_disc_finupd} 
\end{align}
\end{definition}
In the following, we derive the MPDE corresponding to a general second
order accurate time discrete scheme of the form
\eqref{eq:weq_rho_k_semi_disc}-\eqref{eq:weq_u_semi_disc_finupd}. We 
show that the solution of the MPDE is energy-dissipative under a
sufficient condition involving only the RK coefficients, and hence,
the timesteps are independent of $\veps$; see also
\cite{ArunDasGuptaSamHyp2016}. Thus, the time-discrete scheme
\eqref{eq:rho_ieu_sd_k}-\eqref{eq:rhou_ieu_sd_n+1} for the Euler
system is linearly asymptotically $L^2$-stable.
\begin{theorem}
  \label{ieu_Lin_stab_an}
  Consider a second-order time-discrete IMEX-RK scheme of the form
  \eqref{eq:rho_ieu_sd_k}-\eqref{eq:rhou_ieu_sd_n+1}, and let the
  coefficients $b^{(4)}_1,b^{(4)}_2,b^{(4)}_3$ and $b^{(4)}_4$ be
  defined by \eqref{eq:def_b_4} in the Appendix. Then, the scheme is
  linearly $L^2$-stable as long as the timesteps are bounded, and the
  coefficients $b^{(4)}_1,b^{(4)}_2,b^{(4)}_3$ and $b^{(4)}_4$ are
  negative.   
\end{theorem}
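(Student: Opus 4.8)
The plan is to reduce the stability of the nonlinear scheme to a modified equation (MPDE) analysis of the linearised scheme \eqref{eq:weq_rho_k_semi_disc}--\eqref{eq:weq_u_semi_disc_finupd} for the wave system \eqref{eq:WaveHL}, and then to read off the energy balance in the weighted norm $\Norm{\cdot}$. First I would eliminate the intermediate stages $\rho^{k},\mbu^{k}$ from \eqref{eq:weq_rho_k_semi_disc}--\eqref{eq:weq_u_semi_disc_finupd}. Because the operators $(\bu\cdot\nabla)$, $\br\,\dvg$ and $\tfrac{\ba^{2}}{\br\veps^{2}}\nabla$ are constant-coefficient, this is purely linear (equivalently, a computation with Fourier symbols), and it yields an exact one-step map $U^{n+1}=\mcal{M}(\Dlt)U^{n}$ whose generator is a polynomial in $\Dlt$ and in the three spatial operators above, with coefficients assembled solely from the Butcher arrays $\tilde{A},A,\tilde{\omega},\omega$.

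Next I would build the MPDE: postulate $U^{n}=V(t^{n})$ with $V$ solving $\Dt V + H(V)+\tfrac1{\veps^{2}}L(V)=\mcal{R}_{\uu{x}}V$, substitute into the one-step map, Taylor expand in $\Dlt$, and match powers of $\Dlt$. The consistency and order conditions \eqref{eq:IMEX-RK-consistency}--\eqref{eq:IMEX-RK-2ndorder} force the $\mcal{O}(\Dlt)$ correction of a genuine second-order scheme to vanish and the $\mcal{O}(\Dlt^{2})$ correction to be a purely dispersive, third-order operator, so that the first contribution affecting the $L^{2}$ energy comes from the fourth-order operator appearing at order $\Dlt^{3}$. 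Collecting that term gives
\[
\mcal{R}_{\uu{x}}=\Dlt^{3}\bigl(b^{(4)}_{1}\mcal{D}_{1}+b^{(4)}_{2}\mcal{D}_{2}+b^{(4)}_{3}\mcal{D}_{3}+b^{(4)}_{4}\mcal{D}_{4}\bigr)+\mcal{O}(\Dlt^{4}),
\]
where $\mcal{D}_{1},\dots,\mcal{D}_{4}$ are the independent fourth-order operators built from $(\bu\cdot\nabla)$, $\br\,\dvg$ and $\tfrac{\ba^{2}}{\br\veps^{2}}\nabla$, and $b^{(4)}_{1},\dots,b^{(4)}_{4}$ are the coefficients of \eqref{eq:def_b_4}; a point to verify carefully is that the $\veps$-powers carried by the acoustic operator recombine so that each $b^{(4)}_{j}$ is bounded uniformly in $\veps$.

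For the energy estimate I would differentiate $E(t)=\Norm{V(t)}^{2}$ and insert the MPDE. Exactly as in the proof of Theorem~\ref{propLin}, the operators $H$ and $\tfrac1{\veps^{2}}L$ are skew-adjoint with respect to the inner product \eqref{eq:U1U2ip} and so disappear from the balance; the same structural reason shows that every odd-order (dispersive) correction is skew-adjoint and contributes nothing. Integrating the fourth-order terms by parts twice, $(\mcal{D}_{j}V,V)$ reduces to a nonnegative quantity $\Phi_{j}(V)\ge0$, namely the square of an $L^{2}$-norm of second derivatives of $\rho$ or $\mbu$, so that
\[
\frac{\dd}{\dd t}\Norm{V(t)}^{2}=2\Dlt^{3}\sum_{j=1}^{4}b^{(4)}_{j}\,\Phi_{j}(V)+\mcal{O}(\Dlt^{4}).
\]
Hence, provided $\Dlt$ is bounded --- so that the truncation of the MPDE is legitimate and, crucially, there is no constraint coupling $\Dlt$ to $\veps$ --- and provided $b^{(4)}_{1},\dots,b^{(4)}_{4}<0$, the modified energy is non-increasing uniformly in $\veps$; transferring this to the Euler scheme through the usual heuristic that the linearised analysis governs the nonlinear one yields the asserted linear $L^{2}$-stability.

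The main obstacle is the bookkeeping in the second step: pushing the stage elimination and the $\Dlt$-expansion all the way to the $\Dlt^{3}$, fourth-derivative level while keeping the $\veps$-scalings straight, and then showing that the fourth-order part of $\mcal{R}_{\uu{x}}$ genuinely organises into $\sum_{j}b^{(4)}_{j}\mcal{D}_{j}$ with $(\mcal{D}_{j}V,V)\ge0$ --- that is, that no cross terms survive that could spoil the sign of the energy production. Once this normal form is in hand, the energy conclusion is immediate from the skew-adjointness already established for the continuous wave system.
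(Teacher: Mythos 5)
Your strategy is the same as the paper's: derive the modified equation for the linearised (wave-equation) scheme by Taylor expansion, use the order conditions \eqref{eq:IMEX-RK-consistency}--\eqref{eq:IMEX-RK-2ndorder} to kill the first- and second-derivative corrections, discard the third-order (dispersive) terms from the energy balance by skew-adjointness under periodic boundary conditions, and then test the sign of the $\Dlt^{3}$ fourth-order remainder against the weighted energy \eqref{eq:newNorm}. Up to that point the proposal matches the paper's proof of Theorem~\ref{ieu_Lin_stab_an} step for step, including the final heuristic transfer from the linearised to the nonlinear scheme.

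The one step that would fail as written is your normal-form claim that each $(\mcal{D}_{j}V,V)$ integrates by parts to a \emph{nonnegative} quantity $\Phi_{j}(V)\ge 0$, so that the energy production is exactly $2\Dlt^{3}\sum_{j}b^{(4)}_{j}\Phi_{j}(V)$. That is true only for the diagonal blocks ($B^{(4)}_{1,1}$ and $B^{(4)}_{2,2}$ in the Appendix, which give $\norm{(\bu\cdot\nabla)\nabla\rho}^{2}$, $\norm{\Delta\rho}^{2}$, etc.). The operators multiplying $b^{(4)}_{2}$ and $b^{(4)}_{3}$ sit in the off-diagonal blocks $B^{(4)}_{1,2}$ and $B^{(4)}_{2,1}$ and produce genuine cross terms such as $\langle\rho,(\bu\cdot\nabla)^{3}(\nabla\cdot\mbu)\rangle$ and $\langle\mbu,(\bu\cdot\nabla)\nabla\Delta\rho\rangle$, which are indefinite: they couple derivatives of $\rho$ to derivatives of $\mbu$ and are not squares of anything. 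You flag exactly this as the ``main obstacle'' but leave it unresolved, whereas the paper resolves it by applying Cauchy--Schwarz (and the arithmetic-geometric mean inequality) to each cross term, splitting it as half a sum of two squared $L^{2}$-norms whose $\veps$-weights match the diagonal contributions, and only then reading off the sign from the hypothesis $b^{(4)}_{1},\dots,b^{(4)}_{4}<0$; this is why the paper's conclusion \eqref{eq:ieu_energy_ineq_TD} is an inequality rather than the identity you propose. To complete your argument you need precisely that redistribution step; with it, the rest of your outline goes through as in the paper.
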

\begin{proof}
  We consider a general second order accurate IMEX-RK time discretisation in
  \eqref{eq:weq_rho_k_semi_disc}-\eqref{eq:weq_u_semi_disc_finupd}. Expanding 
  the unknown functions in a Taylor series, and making use of the
  conditions
  \eqref{eq:IMEX-RK-consistency}-\eqref{eq:IMEX-RK-2ndorder} for the
  IMEX-RK coefficients results in the following MPDE:  
\begin{align}
	\Dt \begin{pmatrix}
	      \rho\\
	      \mbu
	    \end{pmatrix}
	+(\bu\cdot\nabla) \begin{pmatrix}
				\rho\\
				\mbu
			      \end{pmatrix}
	+ \begin{pmatrix}
	    \br\dvg \mbu	\\
	    \frac{\ba^2}{\br\veps^2} \nabla\rho
	  \end{pmatrix}
  =\Dlt^2B^{(3)}
  \begin{pmatrix}
    \rho\\
    \mbu
  \end{pmatrix}+
  \Dlt^3B^{(4)}
  \begin{pmatrix}
    \rho\\
    \mbu
  \end{pmatrix}.   \label{eq:ieu_mpde_TD}
\end{align}
Here, the operators $B^{(3)}$ and $B^{(4)}$ contain the third and
fourth derivatives of the unknown functions $\rho$ and $\mbu$,
respectively. Since these expressions are quite long, we present them
only in the Appendix. After using the second-order order conditions
\eqref{eq:IMEX-RK-consistency}-\eqref{eq:IMEX-RK-2ndorder}, the MPDE 
\eqref{eq:ieu_mpde_TD} is free from first and second order derivatives.  

The rate of change of the energy $E$, defined in
\eqref{eq:engys_defn}, is given by       
\begin{equation} \label{Eq:ieu_EnergyNorm_TD}
  \frac{dE}{dt} = 2(U,\partial_t U) = \frac{2 \ba^2}{\br
    \veps^2} \left< \rho, \partial_t \rho \right> + 2 \br
  \langle \mbu, \partial_t \mbu \rangle. 
\end{equation}
We use the MPDE \eqref{eq:ieu_mpde_TD} in
\eqref{Eq:ieu_EnergyNorm_TD}, and integrate the resulting terms by
parts. It is easy to see that the third order derivatives do not
contribute as they all vanish in view of the periodic boundary
conditions, and hence we get
\begin{equation}
  \label{eq:ieu_rho_dtrho}
  \begin{split}
    \left< \rho, \partial_t \rho  \right> &=  \Dlt^3 \left\{b^{(4)}_1
      \frac{\ba^2}{\veps^2} \left<\rho, (\bu \cdot
        \nabla)^2 \Delta \rho \right> + b^{(4)}_2\br \left<
        \rho, (\bu \cdot\nabla)^3 (\nabla \cdot \mbu) \right>  \right. \\
    & \quad \left. + \ b^{(4)}_3\frac{\ba^2\br}{\veps^2} \left<\rho,
        (\bu \cdot \nabla) \Delta (\nabla\cdot \mbu) \right> +
      b^{(4)}_4\frac{\ba^4}{\veps^4}\left<\rho, \Delta^2 \rho \right> -
      \frac{1}{24}\left<\rho, (\bu\cdot\nabla)^4\rho\right>\right\},
  \end{split}
\end{equation}
\begin{equation}
\label{eq:ieu_u_dtu}
\begin{split}
  \left<\mbu, \partial_t \mbu  \right> & =
  \Dlt^3\left\{b^{(4)}_1\frac{\ba^2}{\veps^2} \left<\mbu, (\bu \cdot
      \nabla)^2 \nabla (\nabla \cdot \mbu) \right> +
    b^{(4)}_2\frac{\ba^2}{\br \veps^2} \left< \mbu, (\bu
      \cdot \nabla)^3 \nabla \rho \right>\right. \\  
  & \quad \left. + \ b^{(4)}_3 \frac{\ba^4}{\br
      \veps^4} \left< \mbu, (\bu \cdot \nabla) \nabla \Delta \rho \right>
    + b^{(4)}_4 \frac{\ba^4}{\veps^4} \left<\mbu, \nabla \Delta
      (\nabla \cdot \mbu)\right> - \frac{1}{24} \left<\mbu, (\bu \cdot
      \nabla)^4 \mbu\right> \right\}.
\end{split}
\end{equation}
Applying the Cauchy-Schwarz inequality in \eqref{eq:ieu_rho_dtrho} 
and \eqref{eq:ieu_u_dtu}, and using the inequalities thus obtained in
\eqref{Eq:ieu_EnergyNorm_TD}, finally yields
\begin{equation}\label{eq:ieu_energy_ineq_TD}
  \begin{split}
    \frac{dE}{dt} & \leq  2\Dlt^3 \left[ b^{(4)}_1
      \frac{\ba^4}{\br\veps^4} \norm{(\bu \cdot \nabla)
      \nabla \rho}^2 + b^{(4)}_2 \frac{\ba^2}{\veps^2} 
    \left(\norm{(\bu \cdot \nabla )^2 \rho}^2 +\norm{(\bu \cdot
        \nabla ) \nabla \cdot \mbu}^2\right) \right.\\ 
    &\quad + b^{(4)}_3 \frac{\ba^4}{\veps^4} \left(\norm{(\bu \cdot \nabla)
      \nabla \cdot \mbu}^2 + \norm{\Delta \rho}^2\right) + b^{(4)}_4
    \frac{\ba^6}{\br \veps^6} \norm{\Delta \rho}^2 -
    \frac{\ba^2}{24\br \veps^2} \norm{(\bu \cdot \nabla)^2
    \rho}^2 \\
    &\quad\left. + \ b^{(4)}_1 \frac{\br \ba^2}{\veps^2} \norm{(\bu \cdot
    \nabla) \nabla \cdot \mbu}^2 + b^{(4)}_4 \frac{\br
      \ba^4}{\veps^4} \norm{\nabla (\nabla \cdot \mbu)}^2 -
    \frac{\br}{24} \norm{(\bu \cdot \nabla )^2 \mbu}^2\right].
  \end{split}
\end{equation}
It is easy to see that the right hand side of the above inequality is
negative under the hypotheses of the theorem. 
\end{proof}
\begin{remark}
  The coefficients $b^{(4)}$ given in the Appendix are for a general
  second order IMEX-RK scheme. It can be seen that all the second
  order variants we have considered, namely ARS(2,2,2), PR(2,2,2),
  Jin(2,2,2) and RK2CN(2,2,2) are linearly $L^2$-stable by verifying
  the hypotheses of the above theorem. 
\end{remark}

\subsection{$\mcal{E}$-invariance and Asymptotic Accuracy}
\label{sec:E_inv_SD}
In this subsection, we prove the $\mcal{E}$-invariance of
the IMEX-RK time discrete scheme 
\eqref{eq:rho_ieu_sd_k}-\eqref{eq:rhou_ieu_sd_n+1} and its
asymptotic accuracy. It has been shown in \cite{dellacherie} that explicit
Godunov-type schemes in a low Mach number regime are accurate only in
one dimension, and that they are inaccurate in two and three
dimensions. Specifically, the analysis in \cite{dellacherie} shows
that there is an acoustic time scale $\tau_{ac}$ so that a
Godunov-type scheme suffers from inaccuracies beyond a time interval
of $\mcal{O}{(\tau_{ac})}$ due to the creation of spurious acoustic
waves. A cure proposed in \cite{dellacherie} is to remove the
numerical diffusion terms from the MPDE corresponding to the momentum
updates, and to use central differencing for the pressure gradient
term. However, diffusion terms are required due to stability
constraints, and deleting them may not be a feasible option always;
see also \cite{audusse} for a related discussion.

In the following, we prove the $\mcal{E}$-invariance of the
time semi-discrete scheme. However, analysing the numerical solutions
or the solutions of an MPDE resulting from a general second order IMEX
time discretisation is difficult, and hence we perform the
corresponding analysis on the linear wave equation system. The results
of linear analysis shows that the MPDE leaves the well-prepared space
$\mcal{E}$ invariant without requiring any change in the diffusion
matrices. As a result, it follows from Theorem~\ref{ThmDel} that the
solution of the MPDE are $\veps$-close to the incompressible solution,
satisfying the estimates \eqref{eq:ThmDelEst2} or
\eqref{eq:error_ap_estimate_eps} which is crucial to maintain the
asymptotic accuracy.    
\begin{theorem}
  \label{ieu_E_inv_TD}
  Suppose that at time $t^n$ the numerical solution $(\rho^n, \mbu^n)$ to
  the compressible Euler system \eqref{eq:comp} is in $\mcal{E}$, i.e.\
  $\nabla\rho^n(x)=0$ and $\dvg \mbu^n(x)=0$, for all $x$. Then at time
  $t^{n+1}$, the numerical approximation $(\rho^{n+1}, \mbu^{n+1})$
  obtained from the scheme
  \eqref{eq:rho_ieu_sd_k}-\eqref{eq:rhou_ieu_sd_n+1} satisfy
  \begin{equation}    \label{eq:sem_disc_E_inv}
    \nabla\rho^{n+1}(x)=0, \ \mbox{and} \ \dvg \mbu^{n+1}(x)=0, \
    \mbox{for all} \  x. 
  \end{equation}
  In other words, the semi-discrete scheme
  \eqref{eq:rho_ieu_sd_k}-\eqref{eq:rhou_ieu_sd_n+1}  keeps the
  well-prepared space $\mcal{E}$ invariant. 
\end{theorem}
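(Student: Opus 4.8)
The plan is to establish the $\mcal{E}$-invariance directly for the Euler scheme \eqref{eq:rho_ieu_sd_k}--\eqref{eq:rhou_ieu_sd_n+1} by an induction on the Runge--Kutta stage index $k$, carrying the two defining properties of $\mcal{E}$ — spatial constancy of the density and vanishing velocity divergence — from one stage to the next. Since the schemes considered are GSA in the sense of Definition~\ref{eq:GSA}, the update $(\rho^{n+1},\mbu^{n+1})$ coincides with the final stage $(\rho^{s},\mbu^{s})$, so it suffices to propagate membership in $\mcal{E}$ through the stages. The induction hypothesis is that $(\rho^{n},\mbu^{n})\in\mcal{E}$ and that every earlier stage $(\rho^{j},\mbu^{j})$, $j<k$, lies in $\mcal{E}$; hence $\rho^{j}=\const$ and $\dvg\mbu^{j}=0$, from which $\mbq^{j}=\rho^{j}\mbu^{j}$ satisfies $\dvg\mbq^{j}=\rho^{j}\dvg\mbu^{j}=0$, while $p(\rho^{j})=\const$ gives $\nabla p(\rho^{j})=0$.

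First I would settle the density. Substituting the hypothesis into the explicitly computable quantities \eqref{eq:rhohat_ieu_sd_k}--\eqref{eq:rhouhat_ieu_sd_k}, the explicit mass divergences $\dvg\mbq^{\ell}$ and the explicit pressure gradients $\nabla p(\rho^{\ell})$ all vanish, so that $\hat\rho^{k}=\rho^{n}=\const$. Feeding this into the elliptic equation \eqref{eq:rho_ieu_k_elliptic}, the stage density solves $\rho^{k}-\frac{\Dlt^{2}a_{k,k}^{2}}{\veps^{2}}\Delta p(\rho^{k})=\rho^{n}-\Dlt a_{k,k}\dvg\hat\mbq^{k}$. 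A spatially constant field solves this equation exactly when the right-hand side is constant, i.e.\ when $\dvg\hat\mbq^{k}$ is constant; by well-posedness of the elliptic solve this then forces $\rho^{k}=\rho^{n}=\const$. With $\rho^{k}$ constant the implicit pressure gradient $\nabla p(\rho^{k})$ vanishes, so \eqref{eq:ieu_rhou_k^_semi_disc} reduces to $\mbq^{k}=\hat\mbq^{k}$ and the remaining requirement $\dvg\mbu^{k}=0$ becomes simply $\dvg\hat\mbq^{k}=0$.

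In this way the whole theorem collapses onto a single identity for the Euler scheme: that the accumulated \emph{explicit} momentum flux in $\hat\mbq^{k}$ is divergence-free on $\mcal{E}$. Using $\rho^{\ell}=\const$ and $\dvg\mbu^{\ell}=0$, the convective flux in \eqref{eq:rhouhat_ieu_sd_k} simplifies to $\dvg(\mbq^{\ell}\otimes\mbq^{\ell}/\rho^{\ell})=\rho^{\ell}(\mbu^{\ell}\cdot\nabla)\mbu^{\ell}$, and the crux is to evaluate its divergence $\dvg\big((\mbu^{\ell}\cdot\nabla)\mbu^{\ell}\big)=\sum_{i,j}\partial_{i}u^{\ell}_{j}\,\partial_{j}u^{\ell}_{i}$. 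This quantity is governed purely by the convective part of the flux and is independent of the acoustic scaling $1/\veps^{2}$, so it is precisely the ingredient controlled by the convective model underlying Theorems~\ref{propLin} and \ref{ThmDel}. The plan is therefore to verify the identity on the linearisation of \eqref{eq:comp} about the constant state $(\br,\bu)$, namely on the companion wave-equation scheme \eqref{eq:weq_rho_k_semi_disc}--\eqref{eq:weq_u_semi_disc_finupd}, where the flux $\dvg(\mbu\otimes\mbu)$ is replaced by the constant-coefficient advection $(\bu\cdot\nabla)\mbu$. There the commutation $\dvg\big((\bu\cdot\nabla)\mbu\big)=(\bu\cdot\nabla)(\dvg\mbu)=0$ holds whenever $\dvg\mbu=0$, and likewise the explicit mass advection obeys $(\bu\cdot\nabla)\rho^{\ell}=0$ when $\rho^{\ell}=\const$; these two identities give $\dvg\hat\mbq^{k}=0$, close the induction, and — by GSA — transfer the conclusion to $(\rho^{n+1},\mbu^{n+1})$.

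I expect the genuine difficulty to be precisely this convective contribution $\sum_{i,j}\partial_{i}u_{j}\,\partial_{j}u_{i}$: it is the only term in the explicit momentum predictor $\hat\mbq^{k}$ that is not manifestly annihilated on $\mcal{E}$, and hence the only term that could otherwise drive a nonconstant right-hand side in \eqref{eq:rho_ieu_k_elliptic} and spoil $\rho^{k}=\const$. Controlling it is what makes the linearisation step essential — in agreement with the remark preceding the theorem that the invariance is to be analysed on the linear model — since there the constant background velocity makes $(\bu\cdot\nabla)$ commute with $\dvg$ and the contribution vanishes identically. Everything else is uniform in $\veps$ and routine: the vanishing of the implicit pressure gradient on constant density, the vanishing of the explicit mass divergence, and the uniqueness step for the elliptic problem.
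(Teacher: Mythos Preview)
Your route and the paper's diverge at the very first step. The paper never touches the elliptic equation \eqref{eq:rho_ieu_k_elliptic}; instead it multiplies the momentum update \eqref{eq:rhou_ieu_sd_k} (in the form \eqref{eq:ieu_rhou_k^_semi_disc}) by $\veps^{2}$ and formally sends $\veps\to0$ to conclude $\nabla p(\rho^{k})=0$, hence $\rho^{k}=\const$, then integrates the mass update over the torus to obtain $\rho^{k}=\rho^{n}$ and $\dvg\mbu^{k}=0$. In that argument the nonlinear convective flux sits inside the $\mcal{O}(1)$ quantity $\hat{\mbq}^{k}$ and simply vanishes after scaling by $\veps^{2}$, so the obstruction $\sum_{i,j}\partial_{i}u_{j}\,\partial_{j}u_{i}$ that you isolate never appears.

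Your elliptic-equation route is more honest for fixed $\veps>0$, and you have correctly identified the genuine difficulty: for the nonlinear scheme the right-hand side of \eqref{eq:rho_ieu_k_elliptic} is \emph{not} constant in general because $\dvg\big((\mbu^{\ell}\cdot\nabla)\mbu^{\ell}\big)$ need not vanish on $\mcal{E}$, so exact $\mcal{E}$-invariance for the full nonlinear scheme at fixed $\veps$ does not follow from your argument. Your proposed fix---pass to the linearisation where $(\bu\cdot\nabla)$ commutes with $\dvg$---therefore proves $\mcal{E}$-invariance only for the wave-equation scheme \eqref{eq:weq_rho_k_semi_disc}--\eqref{eq:weq_u_semi_disc_finupd}, which is a weaker statement than the theorem asserts. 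This is not a fatal flaw so much as a different scoping: the paper's own proof is formal (it relies on the $\veps\to0$ balancing, which is not a statement for fixed $\veps$), and the remark following the theorem explicitly concedes that the argument ``doesn't take into account the correction terms introduced by the time discretisation'', deferring the rigorous invariance to the linear MPDE analysis of Proposition~\ref{ieu_MPDE_inv}. In short, both routes end up relying on the linear model; the paper presents the formal $\veps\to0$ argument as the proof of the theorem, whereas your argument is closer in spirit to the subsequent proposition.
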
 
\begin{proof}
We use induction on the number of stages to prove the theorem. As
a first step, we prove the well-preparedness for the
first-stage, i.e.\ $k=1$, which corresponds to a fully implicit 
step. Multiplying \eqref{eq:rhou_ieu_sd_k} by $\veps^2$ and letting
$\veps \to 0$, we get
\begin{equation}
\nabla p(\rho^1) = 0 \label{eq:ieu_grad_rho_1} .
\end{equation}
Hence, the density $\rho^{1}$ at the first-stage is spatially
constant. Next, we consider the mass update \eqref{eq:rho_ieu_sd_k} and obtain 
\begin{equation}
\label{eq:ieu_mass_1}
\frac{\rho^{1} - \rho^{n}}{\Dlt} = - a_{11} \rho^{1} \nabla \cdot \mbu^{1}.
\end{equation}
We Integrate the above equation \eqref{eq:ieu_mass_1} over a domain
$\Omega$, and use the Gauss' divergence theorem to get
\begin{equation}
\label{eq:ieu_mass_1_inte}
\abs{\Omega} \frac{\rho^{1} - \rho^{n}}{\Dlt} = - a_{11}
\rho^{1} \int_{\Omega}\nabla \cdot \mbu^{1} = - a_{11}
\rho^{1} \int_{\D \Omega} \uu{\nu} \cdot \mbu^{1}.  
\end{equation}
Using periodic or wall boundary conditions, we can see that the right
most integral in the above equation \eqref{eq:ieu_mass_0_0_inte}
vanishes. As a result, $\rho^{1} =  \rho^{n}$. Since the density
$\rho^{n}$ at time $t^n$ is a constant, we immediately obtain that
$\rho^{1}$ is also a constant. Using this in equation
\eqref{eq:ieu_mass_1} yields the divergence condition   
\begin{equation}
\nabla \cdot \mbu^{1} = 0,
\end{equation} 
for the velocity $\mbu^{1}$ at the first-stage. This completes the proof
for $k = 1$.

For $k = 2$ and then on, we consider the intermediate
stages \eqref{eq:rho_ieu_sd_k}-\eqref{eq:rhou_ieu_sd_k} in the form
\eqref{eq:ieu_rho_k^_semi_disc}-\eqref{eq:ieu_rhou_k^_semi_disc}. Now
proceeding as in the case of $k = 1$, we can obtain $\rho^{k} =
\rho^{n} $. Hence, for the $k^{th}$-stage, the density$\rho^{k}$ is
same as that of $\rho^n$, and in turn we also obtain the divergence
condition:  
\begin{equation}
\nabla \cdot \mbu^{k} = 0.
\end{equation} 
As the scheme is assumed to be GSA, the numerical solution
$(\rho^{n+1}, \mbu^{n+1})$ obtained from the scheme
\eqref{eq:rho_ieu_sd_k}-\eqref{eq:rhou_ieu_sd_n+1} lives in
$\mcal{E}$, whenever the initial data $(\rho^n, \mbu^n)$ lives in
$\mcal{E}$. In other words, the IMEX-RK scheme
\eqref{eq:rho_ieu_sd_k}-\eqref{eq:rhou_ieu_sd_n+1} leaves the
well-prepared space $\mcal{E}$ invariant.  
\end{proof}

\begin{remark}
  It has to be noted that the analysis carried out in the above
  theorem doesn't take into account the correction terms introduced by
  the time discretisation. In order to substantiate the claims of the
  formal analysis thus performed in Theorem~\ref{ieu_E_inv_TD}, and to
  obtain the estimate \eqref{eq:ThmDelEst2} for the solution of the
  MPDE \eqref{eq:ieu_mpde_TD}, we prove its $\mcal{E}$-invariance in the
  following proposition. 
\end{remark}

\begin{proposition}
\label{ieu_MPDE_inv}
The modified equation system \eqref{eq:ieu_mpde_TD} for the IMEX-RK 
scheme \eqref{eq:rho_ieu_sd_k}-\eqref{eq:rhou_ieu_sd_n+1} applied to
the wave equation system is $\mcal{E}$-invariant, i.e.\ if the initial
data $(\rho(0, \uu{x}), \mbu(0, \uu{x}))$ is in $\mcal{E}$, then the
solution $(\rho(t, \uu{x}), \mbu(t, {x}))$ of the MPDE
\eqref{eq:ieu_mpde_TD} is in $\mcal{E}$ for all times $t > 0$.  
\end{proposition}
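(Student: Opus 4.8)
The plan is to verify directly that the right-hand side of the MPDE \eqref{eq:ieu_mpde_TD}, restricted to data in $\mcal{E}$, produces a time derivative that again lies in $\mcal{E}$; then $\mcal{E}$-invariance follows by a standard argument for the associated flow. Concretely, suppose $(\rho(t,\uu{x}),\mbu(t,\uu{x}))\in\mcal{E}$, so that $\nabla\rho=0$ (hence $\rho$ depends only on $t$, and on the torus is a constant in $\uu{x}$) and $\dvg\mbu=0$. I would first substitute these two constraints into every term on both sides of \eqref{eq:ieu_mpde_TD}. On the left, the acoustic terms $\br\dvg\mbu$ and $\frac{\ba^2}{\br\veps^2}\nabla\rho$ vanish, and the convective term reduces to $(\bu\cdot\nabla)\mbu$ in the velocity slot and $0$ in the density slot. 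On the right, I would inspect the operators $B^{(3)}$ and $B^{(4)}$ term by term (their explicit forms are in the Appendix): every summand there is built from $\nabla\rho$, $\dvg\mbu$, and derivatives thereof, composed with $(\bu\cdot\nabla)$, $\Delta$, etc. Since $\nabla\rho=0$ and $\dvg\mbu=0$ identically in $\uu{x}$, and these operators commute with $(\bu\cdot\nabla)$ and with spatial differentiation (as $\bu$ is a constant state), every such summand vanishes. The same is true for the pure convective correction $-\tfrac{1}{24}\Dlt^3(\bu\cdot\nabla)^4$ acting on $\rho$: it is a constant. Hence on $\mcal{E}$ the MPDE collapses to
\begin{equation}
  \Dt\rho = 0, \qquad \Dt\mbu + (\bu\cdot\nabla)\mbu = -\tfrac{1}{24}\Dlt^3(\bu\cdot\nabla)^4\mbu + (\text{terms that vanish once } \dvg\mbu=0).
\end{equation}

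Next I would close the invariance argument. From $\Dt\rho=0$ we get $\rho(t,\uu{x})=\rho(0,\uu{x})=\const$, so the density constraint $\nabla\rho=0$ is preserved for all $t>0$. For the velocity, I would apply $\dvg$ to the reduced evolution equation for $\mbu$ and use that the right-hand side, viewed as a function of the full solution, is itself an $\mcal{E}$-preserving operator: differentiating $\dvg\mbu$ in time yields a linear evolution equation for $w:=\dvg\mbu$ of the form $\Dt w = \mcal{G}_{\uu{x}} w$ for some constant-coefficient spatial operator $\mcal{G}_{\uu{x}}$ (obtained by taking the divergence of each term, every one of which carries an overall factor that is a spatial derivative of $w$ or of $\nabla\rho$). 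Since $w(0,\cdot)=0$ and the problem is well-posed, uniqueness forces $w\equiv0$, i.e.\ $\dvg\mbu(t,\cdot)=0$ for all $t>0$. Therefore $(\rho(t),\mbu(t))\in\mcal{E}$ for all $t>0$, which is the claim; moreover, by Theorem~\ref{ThmDel}\eqref{ThmDelCond2} this $\mcal{E}$-invariance yields the estimate \eqref{eq:ThmDelEst2}, as asserted in the surrounding discussion.

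The routine but slightly tedious part is the term-by-term check that $B^{(3)}$ and $B^{(4)}$ contain no summand that survives when $\nabla\rho=0$ and $\dvg\mbu=0$ — one reads this off the explicit Appendix formulas, and the energy identities \eqref{eq:ieu_rho_dtrho}--\eqref{eq:ieu_u_dtu} already exhibit the relevant structure (notice that in \eqref{eq:ieu_rho_dtrho} every term pairs $\rho$ against an expression that is a spatial derivative of $\Delta\rho$, $\nabla\cdot\mbu$, or $(\bu\cdot\nabla)^k\rho$, all of which vanish on $\mcal{E}$ or are constants). The only genuine subtlety, and the step I would be most careful about, is justifying the uniqueness argument for $w=\dvg\mbu$: one needs the reduced equation for $\mbu$ to have right-hand side depending on $\mbu$ only through $\dvg\mbu$ and its derivatives after the constraints are imposed, so that the closed scalar equation for $w$ genuinely has zero initial data and a well-posed linear evolution; this is where the precise structure of the MPDE (the fact that, after using the second-order order conditions, all first- and second-order derivative corrections drop out, as noted just below \eqref{eq:ieu_mpde_TD}) is essential and must be invoked rather than re-derived.
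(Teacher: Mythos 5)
Your overall strategy is the right one and is essentially the paper's: derive a closed linear evolution for the constraint residuals $\nabla\rho$ and $\dvg\mbu$ and invoke uniqueness for a linear well-posed problem with zero initial data. The paper does this in Fourier variables, where the bookkeeping is automatic: it shows that the pair $\left(\hat{\rho}(t,\uu{\xi}),\,\uu{\xi}\cdot\hat{\mbu}(t,\uu{\xi})\right)$ satisfies, for each $\uu{\xi}$, a closed $2\times 2$ linear ODE system $\Dt(\hat{\rho},\uu{\xi}\cdot\hat{\mbu})^T=M(\uu{\xi})(\hat{\rho},\uu{\xi}\cdot\hat{\mbu})^T$, so that vanishing initial data forces the pair to vanish for all $t$. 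Your physical-space version of this is workable, and your term-by-term observation about $B^{(3)}$ and $B^{(4)}$ (every summand either carries a $\nabla\rho$ or a $\dvg\mbu$ ``on the inside'', or is a pure convective power $(\bu\cdot\nabla)^k$ whose divergence is $(\bu\cdot\nabla)^k\dvg\mbu$ because $\bu$ is constant) is exactly the structural fact that makes the closed system exist.

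The gap is a circularity in the order of your argument. You first \emph{assume} $(\rho(t),\mbu(t))\in\mcal{E}$ for $t>0$ to reduce the MPDE, conclude $\Dt\rho=0$ and hence that $\nabla\rho=0$ persists, and only then set up the scalar equation $\Dt w=\mcal{G}_{\uu{x}}w$ for $w=\dvg\mbu$. But the conclusion $\Dt\rho=0$ uses $\dvg\mbu(t)=0$ for $t>0$ (the density equation contains $\br\dvg\mbu$ and the $B^{(3)}_{1,2}$, $B^{(4)}_{1,2}$ terms), which is precisely what the $w$-argument is supposed to establish; conversely, the divergence of the velocity equation contains $\Delta\rho$ through the leading acoustic term and through $B^{(3)}_{2,1}$, $B^{(4)}_{2,1}$, so $w$ does \emph{not} satisfy a closed scalar equation unless $\nabla\rho\equiv 0$ is already known. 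Neither constraint can be propagated before the other. The repair is the one the paper's Fourier formulation makes explicit: apply $\nabla$ to the density equation and $\dvg$ to the velocity equation of the \emph{full} MPDE (without imposing the constraints), obtaining a coupled, constant-coefficient, linear system for the pair $(\nabla\rho,\dvg\mbu)$ with zero initial data, and apply uniqueness to that coupled system. You already have all the ingredients for this; the write-up just needs the two residuals treated simultaneously rather than sequentially.
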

\begin{proof}
  Since the problem \eqref{eq:NondimEuler} is given on the torus
  $\mbb{T}^d$, we can assume that it is given in the whole of
  $\mbb{R}^d$ by periodic extension. It is convenient to work in the
  Fourier variables as it simplifies the calculations. Taking the
  Fourier transform of \eqref{eq:ieu_mpde_TD} in space we obtain 
  \begin{equation}
    \label{eq:ieu_mpde_TD_FT}
    \Dt \begin{pmatrix}
      \hat{\rho} \\
      \hat{\mbu}
    \end{pmatrix}
    +i(\bu\cdot\uu{\xi}) \begin{pmatrix}
      \hat{\rho} \\
      \hat{\mbu}
    \end{pmatrix}+
    i\begin{pmatrix}
      \br\uu{\xi}\cdot\hat{\mbu}\\
      \frac{\ba^2}{\br\veps^2}\uu{\xi}\hat{\rho}
    \end{pmatrix}
    = \Dlt^2\hat{B}^{(3)}\begin{pmatrix}
      \hat{\rho} \\
      \hat{\mbu}
    \end{pmatrix}
    +\Dlt^3\hat{B}^{(4)}\begin{pmatrix}
      \hat{\rho} \\
      \hat{\mbu}
    \end{pmatrix},
  \end{equation}
  where the matrices $\hat{B}^{(3)}$ and $\hat{B}^{(4)}$ are,
  respectively, the Fourier transforms of the matrices $B^{(3)}$ and
  $B^{(4)}$ introduced in Theorem~\ref{ieu_Lin_stab_an}. 
  
  Note that $(\rho,\mbu)\in\mcal{E}$ if, and only if,
  $(\hat{\rho},\hat{\mbu})\in\hat{\mcal{E}}$, where $\hat{\mcal{E}}$ is
  given by 
  \begin{equation}
    \label{eq:hat_E}
    \hat{\mcal{E}}:=\{(\hat{\rho},\hat{\mbu})\in L^2(\mbb{T}^d)^{1+d}
    \colon\hat{\rho}(\cdot,\uu{\xi})=0 \ \mbox{and} \
    \uu{\xi}\cdot\hat{\mbu}(\cdot,\uu{\xi})=0 \ \mbox{for all} \ \uu{\xi}\in\mbb{R}^d\}. 
  \end{equation}
 Hence, the quantities of interest are $\hat{\rho}$ and
 $\uu{\xi}\cdot\hat{\mbu}$. Since the first equation of
 \eqref{eq:ieu_mpde_TD_FT} is already an equation for $\hat{\rho}$,
 taking the dot product of the second equation for $\hat{\mbu}$ in
 \eqref{eq:ieu_mpde_TD_FT}  with $\uu{\xi}$ and combining with the first
 equation of \eqref{eq:ieu_mpde_TD_FT} for $\hat{\rho}$ yields a
 linear system:
 \begin{equation}
   \label{eq:ieu_FT_mpde_sys}
   \Dt \begin{pmatrix}
     \hat{\rho} \\
     \uu{\xi}\cdot\hat{\mbu}
   \end{pmatrix}
   =M(\uu{\xi}) \begin{pmatrix}
     \hat{\rho} \\
     \uu{\xi}\cdot\hat{\mbu}
   \end{pmatrix},
 \end{equation} 
 where $M(\uu{\xi})$ is a matrix consisting of polynomials in
 $\uu{\xi}$ obtained from \eqref{eq:ieu_mpde_TD_FT}. 
Let us suppose that $(\rho(0,\cdot),\mbu(0,\cdot))\in\mcal{E}$,  i.e.\
$\hat{\rho}(0,\uu{\xi})=\uu{\xi}\cdot\hat{\mbu}(0,\uu{\xi})=0$. The solution of
\eqref{eq:ieu_FT_mpde_sys} then clearly shows that
$\hat{\rho}(t,\uu{\xi})=\uu{\xi}\cdot\hat{\mbu}(t,\uu{\xi})=0$. Hence,
solution of \eqref{eq:ieu_mpde_TD} lives in $\mcal{E}$ for all times
$t>0$, if it was in $\mcal{E}$ at time $t = 0$.
\end{proof}

\begin{remark}
  Combining the above results, we can conclude that the time
  semi-discrete IMEX-RK scheme
  \eqref{eq:rho_ieu_sd_k}-\eqref{eq:rhou_ieu_sd_n+1} is AP and AA in
  the sense of Definitions~\ref{defn:AP} and \ref{defn:AA}.  
\end{remark}

\section{Analysis of the Space-time Fully-discrete Scheme} 
\label{sec:sec6}
This section is devoted to the analysis of the fully-discrete IMEX-RK
scheme resulting after a space discretisation by the finite volume
method. For the sake of simplicity, we consider only two-dimensional
(2-D) problems, and we assume that the given Cartesian spatial domain
$\Omega$ is discretised into rectangular mesh cells of lengths $\Delta
x_1$ and $\Delta x_2$ in $x_1$- and $x_2$-directions, respectively. For 
notational conveniences, let us also introduce the following finite
difference operators $\mu$ and $\delta$, defined via 
\begin{equation}	\label{eq:ieu_delta_mu_defn}
  \delta_{x_1}\omega_{i,j}:={\omega}_{i+\frac{1}{2},j}-{\omega}_{i-\frac{1}{2},j},
  \quad
  \mu_{x_1} \omega_{i,j} := \frac{\omega_{i+ \frac{1}{2}, j}+\omega_{i
      - \frac{1}{2}, j}}{2}, 
\end{equation}
in the $x_1$-direction, with analogous definitions in the
$x_2$-direction.

Given an approximation $W_{i,j}^n$ to the piecewise constant cell
averages of the conserved variable $W$ at time $t^n$, we reconstruct
all the conservative variables using a standard MUSCL-type piecewise
linear interpolant    
\begin{equation}
  P_{i,j}(x_1, x_2) = U_{i,j}^n + W_{i,j}^{\prime} (x_1 - x_{1_i}) +
  W_{i,j}^\backprime (x_2 - x_{2_j}) 
\end{equation}
in every computational cell. Here, $W_{i,j}^{\prime}$ and
$W_{i,j}^\backprime$ are, respectively, the discrete slopes in $x_1$-
and $x_2$-directions. It has to be noted that the analysis presented
in Section~\ref{sec:sec2} can be carried out to the fully-discrete
scheme, only if we restrict ourselves to smooth solutions. In such
problems, the discrete slopes $W_{i, j}^{\prime}$ and $W_{i,
  j}^{\backprime}$ are approximated using central differences, i.e.\    
\begin{equation}
  W_{i, j}^\prime := \frac{W_{i+1, j} - W_{i-1, j}}{2 \Delta x_1}, \quad
  W_{i, j}^\backprime := \frac{W_{i, j+1} - W_{i, j-1}}{2 \Delta x_2}.
\end{equation}
However, if the solution under consideration is discontinuous, we use
slope limitting techniques to obtain $W_{i, j}^{\prime}$ and $W_{i,
  j}^{\backprime}$, e.g.\
\begin{equation}
  W_{i, j}^\prime := \CWENO\left(\frac{W_{i+1, j} - W_{i, j}}{\Delta
      x_1}, \frac{W_{i, j} - W_{i-1, j}}{\Delta x_1}\right)
\end{equation}
in the $x_1$-direction. Here, the $\CWENO$ function is defined by
\begin{equation*}
  \CWENO(a,b):=\frac{\omega(a)a+\omega(b)b}{\omega(a)+\omega(b)}, \
  \omega(a):=(\delta+a^2)^{-2}, \ \delta=10^{-6}. 
\end{equation*}
Let $F_1$ and $F_2$ be, respectively, the non-stiff fluxes in $x_1$-
and $x_2$-directions, and let $G_1$ and $G_2$ be, respectively, the
stiff fluxes in $x_1$- and $x_2$-directions, cf.\
\eqref{eq:flux_split}. The eigenvalues of the Jacobians of $F_1(W)$
and $F_2(W)$ with respect to $W$ can be obtained as $\ld_{1,1}=0,
\ld_{1,2}=u_1, \ld_{1,3}=2 u_1$, and $\ld_{2,1}=0, \ld_{2,2}=u_2,
\ld_{2,3}=2 u_2$. The timestep $\Dlt$ at time $t^n$ is computed by the
CFL condition:
\begin{equation}
  \label{eq:CFL}
  \Dlt\max_{i,j}\max\left(\frac{\left|\ld_{1,3}\left(W_{i,j}^n\right)\right|}{\Delta
      x_1}, \frac{\left|\ld_{2,3}\left(W_{i,j}^n\right)\right|}{\Delta
      x_2}\right)=\nu, 
\end{equation} 
where $\nu<1$ is the given CFL number. Note that the above condition
is almost like the advective CFL condition, and is independent of
$\veps$.  

\subsection{Space-time Fully-discrete Scheme}
Discretising the semi-discrete scheme
\eqref{eq:rho_ieu_sd_k}-\eqref{eq:rhou_ieu_sd_n+1} using the finite
volume method, and replacing the physical fluxes by suitable numerical
fluxes yields the fully-discrete scheme. In order to introduce the
numerical fluxes, we first rewrite the IMEX time semi-discrete scheme
\eqref{eq:rho_ieu_sd_k}-\eqref{eq:rhou_ieu_sd_n+1} in terms of the
stiff and non-stiff flux functions, via
\begin{align}
  W^k &= W^n - \Dlt \tilde{a}_{k,\ell} \partial_{x_m} F_{m}(W^\ell) - \Dlt
        a_{k,l} \partial_{x_m} G_{m}(W^l), \ k = 1,2, \ldots,
        s, \label{eq:ieu_sd_k} \\
  W^{n+1} &= W^n - \Dlt \tilde{\omega}_{k} \partial_{x_m} F_{m}(W^k) -
            \Dlt \omega_{k} \partial_{x_m} G_{m}(W^k). \label{eq:ieu_sd_n+1}
\end{align}  
\begin{definition} 
The $k^{th}$ stage of an $s$-stage fully-discrete IMEX-RK scheme for the
Euler system \eqref{eq:comp} is defined as
\begin{equation}
\label{eq:ieu_FD_k}
W^k_{i,j} = W^n_{i,j} - \tilde{a}_{k, \ell} \nu_m \delta_{x_m}
\mcal{F}_{m,i,j}^\ell- a_{k, l} \nu_m \delta_{x_m}
\mcal{G}_{m,i,j}^l, \  k = 1, 2,\ldots, s 
\end{equation}
with the numerical solution at time $t^{n+1}$ given by   
\begin{equation}
\label{eq:ieu_FD_n+1}
W^{n+1}_{i,j} = W^n_{i,j} - \tilde{\omega}_{k}\nu_m \delta_{x_m}
\mcal{F}_{m,i,j}^k- \omega_{k} \nu_m \delta_{x_m}
\mcal{G}_{m,i,j}^k,
\end{equation}
where the repeated index $m$ takes values in $\{1,2\}$,
$\nu_m:=\frac{\Delta t}{\Delta x_m}$ denote the mesh ratios, and
$\mcal{F}_{m}$ and $\mcal{G}_{m}$ are, respectively, the numerical
fluxes used to approximate the physical fluxes $F_{m}$ and $G_{m}$. In
this paper, we have used a simple Rusanov-type flux for
$\mcal{F}_{m}$ and central flux for $\mcal{G}_{m}$, defined, e.g.\
along $x_1$- and $x_2$-directions as  
\begin{equation}
\begin{aligned}
\mcal{F}_{1, i+\frac{1}{2}, j}^k &= \frac{1}{2} \left(F_1\left(W_{i+\frac{1}{2},j}^{k+}\right) +
  F_1\left(W_{i+\frac{1}{2},j}^{k-}\right) \right) - \frac{\alpha_{1, i + \frac{1}{2},j}}{2}
\left(W_{i+\frac{1}{2},j}^{k+} - W_{i+\frac{1}{2}, j}^{k-}\right),\\
\mcal{F}_{2, i, j+\frac{1}{2}}^k &= \frac{1}{2} \left(F_2\left(W_{i,j+\frac{1}{2}}^{k+}\right) +
  F_2\left(W_{i, j+\frac{1}{2}}^{k-}\right) \right) - \frac{\alpha_{2, i,j + \frac{1}{2}}}{2}
\left(W_{i,j+\frac{1}{2}}^{k+} - W_{i, j+\frac{1}{2}}^{k-}\right),\\
\mcal{G}_{1, i+ \frac{1}{2}, j} &= \frac{1}{2} \left( G_1( W_{i+1, j}^{k}) +
  G_1(W_{i, j}^{k}) \right), \quad
\mcal{G}_{2, i, j+ \frac{1}{2}} = \frac{1}{2} \left(G_2(W_{i, j+1}^{k}) +
  G_2(W_{i, j}^{k}) \right).
\end{aligned}
\end{equation}
Here, $W^{k\pm}_{i+\frac{1}{2}, j}$ and
$W^{k\pm}_{i,j+\frac{1}{2}}$ are the interpolated states obtained
using the piecewise linear reconstructions, and the wave-speeds are
computed as, e.g.\ in the $x_1$-direction
\begin{equation}
  \label{eq:wave_speed}
  \alpha_{1, i +
    \frac{1}{2},j}:=\max\left(\left|\ld_{1,3}\left(W_{i+\frac{1}{2},j}^{k-}\right)\right|,
      \left|\ld_{1,3}\left(W_{i+\frac{1}{2},j}^{k+}\right)\right|\right). 
\end{equation}
\end{definition}

The rest of this section is devoted to establishing the AP and AA
properties of the fully-discrete scheme
\eqref{eq:ieu_FD_k}-\eqref{eq:ieu_FD_n+1} as done in
Section~\ref{sec:sec5} for the case of the time semi-discrete scheme. 

\subsection{AP Property}

\subsubsection{Asymptotic Consistency}
\label{sec:asympt-cons-1}

\begin{theorem} \label{Asym_cons_FD}
Suppose that the data at time $t^{n}$ are well-prepared, i.e.\
$\rho^{n}$ and $\mbu^{n}$ admit the decomposition:
\begin{align}
  \rho_{i,j} &= \rho_{(0),i,j} + \veps^2 \rho_{(2),i,j}
             , \label{eq:rho_wp_fully-disc} \\  
  \mbu_{i,j} &= \mbu_{(0),i,j} + \veps
                \mbu_{(1),i,j}, \label{eq:u_wp_fully-disc}  
\end{align}
where $\hat{\nabla} \rho^{n}_{(0),i,j} = 0$ and $\hat{\nabla} \cdot
\mbu^{n}_{(0),i,j} = 0$. Here, $\hat{\nabla}$ is the discrete gradient
introduced by the implicit terms, i.e.\ by replacing the derivatives
by central differences. Then for a GSA scheme, if  the intermediate
solutions $\rho^{k}_{i,j}$ and $\mbu^{k}_{i,j}$ defined by
\eqref{eq:ieu_FD_k} admit the same decomposition
\eqref{eq:rho_wp_fully-disc}-\eqref{eq:u_wp_fully-disc}, they
satisfy  $\hat{\nabla} \rho^{k}_{(0),i,j} = 0$ and $\hat{\nabla} \cdot
\mbu^{k}_{(0),i,j} = 0$, i.e.\ they are well-prepared as well. As a
consequence, if the numerical solutions $\rho^{n+1}_{i,j}$ and
$\mbu^{n+1}_{i,j}$ admit the decomposition
\eqref{eq:rho_wp_fully-disc}-\eqref{eq:u_wp_fully-disc}, then they
are also well-prepared. In other words, the fully discrete scheme
\eqref{eq:ieu_FD_k}-\eqref{eq:ieu_FD_n+1} is asymptotically 
consistent in the sense of Definition~\ref{defn:AP}.    
\end{theorem}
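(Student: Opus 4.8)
The plan is to mimic, at the fully-discrete level, the induction on Runge--Kutta stages carried out in the proof of Theorem~\ref{Asym_cons_SD} for the time semi-discrete scheme, but now keeping track of the discrete operators $\delta_{x_m}$, $\mu_{x_m}$ and, crucially, the discrete gradient $\hat{\nabla}$ coming from the central differencing of the implicit (stiff) fluxes $\mcal{G}_m$. First I would substitute the ansatz \eqref{eq:rho_wp_fully-disc}--\eqref{eq:u_wp_fully-disc} into the first-stage equations \eqref{eq:ieu_FD_k} with $k=1$ (a fully implicit step for a type-A scheme, or a trivial step followed by an implicit one for type-CK) and collect powers of $\veps$. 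The stiff flux $\mcal{G}_m$ carries the factor $p(\rho)/\veps^2$ in the momentum component, so the $\mcal{O}(\veps^{-2})$ balance in the momentum update forces $\hat{\nabla} p(\rho^1_{(0),i,j}) = 0$, i.e.\ $\rho^1_{(0),i,j}$ is ``discretely constant'' with respect to the central-difference gradient $\hat{\nabla}$; the same argument at order $\veps^{-1}$ (or by linearising $p$) gives that $\rho^1_{(1),i,j}$ is discretely constant too.

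Next I would look at the $\mcal{O}(1)$ part of the mass update at the first stage, which reads, schematically, $(\rho^1_{(0),i,j} - \rho^n_{(0),i,j})/\Dlt = -a_{11}\,\rho^1_{(0)}\,\hat{\nabla}\cdot\mbu^1_{(0),i,j}$ after using that $\rho^1_{(0)}$ is spatially constant so it factors out of the discrete divergence. Summing this identity over all cells $(i,j)$ and invoking the periodic (or wall) boundary conditions makes the telescoping sum $\sum_{i,j}\delta_{x_m}(\cdot)$ on the right-hand side vanish, exactly as the boundary integral vanished in the continuous proof; hence $\rho^1_{(0),i,j} = \rho^n_{(0),i,j}$, which is constant by hypothesis, and feeding this back into the mass update yields $\hat{\nabla}\cdot\mbu^1_{(0),i,j} = 0$. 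This closes the base case $k=1$. For the inductive step $k \geq 2$ I would rewrite \eqref{eq:ieu_FD_k} in the ``hat'' form analogous to \eqref{eq:ieu_rho_k^_semi_disc}--\eqref{eq:ieu_rhou_k^_semi_disc}, separating the already-computed explicit and earlier-stage implicit contributions (well-prepared by the induction hypothesis, and in particular contributing nothing at orders $\veps^{-2}$, $\veps^{-1}$) from the diagonal implicit term with coefficient $a_{k,k}\neq 0$; the base-case argument then applies verbatim to give $\rho^k_{(0),i,j} = \rho^n_{(0),i,j} = \const$ and $\hat{\nabla}\cdot\mbu^k_{(0),i,j}=0$. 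Finally, since the scheme is GSA, $W^{n+1}_{i,j}$ coincides with $W^s_{i,j}$, so $\rho^{n+1}_{(0),i,j}$ is constant and $\hat{\nabla}\cdot\mbu^{n+1}_{(0),i,j}=0$; reading off the surviving $\mcal{O}(1)$ terms in \eqref{eq:ieu_FD_n+1} gives a discrete incompressible update consistent with \eqref{eq:incomp}, which is the asymptotic consistency claimed.

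The main obstacle I anticipate is bookkeeping rather than conceptual: one must be careful that the reconstruction (the MUSCL/central-difference slopes) and the Rusanov numerical viscosity $\alpha_{m}$ in $\mcal{F}_m$ do not spoil the order counting. The non-stiff flux $\mcal{F}_m$ is explicit and, by the flux splitting \eqref{eq:flux_split}, its density component is $\uu{0}$ while its momentum component $\mbq\otimes\mbq/\rho$ is $\mcal{O}(1)$, so it contributes only at $\mcal{O}(1)$; but the Rusanov dissipation term $\alpha_{m}(W^{k+}-W^{k-})$ involves wave speeds $\ld_{m,3} = 2u_m = \mcal{O}(1)$ and jumps in $\rho$ that, under the well-prepared ansatz, are $\mcal{O}(\veps^2)$ in density and $\mcal{O}(\veps)$ in velocity---so one should verify that these dissipative contributions are genuinely lower order and do not feed back into the leading-order constraints. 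I would also need to check that the discrete divergence theorem (summation-by-parts for $\delta_{x_m}$ against periodic data) is applied to the right quantity, namely to $\rho^k_{(0)}\,\hat\nabla\cdot\mbu^k_{(0)}$ only after pulling out the constant $\rho^k_{(0)}$, since $\delta_{x_m}$ does not obey a discrete product rule. Once these routine checks are in place, the structure of the argument is identical to that of Theorem~\ref{Asym_cons_SD}, and the remark following that theorem about type-A versus type-CK schemes carries over unchanged.
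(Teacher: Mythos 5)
Your proposal is correct and follows essentially the same route as the paper: the $\mcal{O}(\veps^{-2})$ balance forcing $\hat{\nabla}p(\rho^{1}_{(0)})=0$, summation of the $\mcal{O}(1)$ mass update over the periodic domain to get $\rho^{1}_{(0)}=\rho^{n}_{(0)}$ and hence the discrete divergence condition, induction over the stages via the hat form, and the GSA property for the final update. Your extra remarks about the Rusanov dissipation and the discrete product rule are sensible sanity checks that the paper passes over silently, but they do not change the argument.
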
 
\begin{proof}
We follow similar lines as in the proof of Theorem~\ref{Asym_cons_SD}
to establish the asymptotic consistency of the fully-discrete
scheme. We plugin the ansatz \eqref{eq:f_ansatz} in a discrete form 
for all the dependent variables in
\eqref{eq:ieu_FD_k}-\eqref{eq:ieu_FD_n+1}. Equating to zero the
$O(\veps^{-2})$ terms in the first stage, i.e.\ $k=1$, which is a fully
implicit step, gives  
\begin{equation}
\left(\frac{\mu_{x_1} \delta_{x_1}}{\Delta x_1} p \left( \rho^{1}_{(0), i, j}
    \right),
  \frac{\mu_{x_2} \delta_{x_2}}{\Delta x_2} p \left( \rho^{1}_{(0), i, j}
    \right)\right) =
0 \label{eq:ieu_grad_rho_1_0_FD} . 
\end{equation}
Hence, the zeroth order density $\rho^{1}_{(0), i, j}$ is constant for
all $i,j$, if we assume periodic or wall boundary conditions. In an
analogous way, we can show that the first order density
$\rho^{1}_{(1), i, j}$ is constant for all $i,j$.  

Next, we consider  the $\mcal{O}(1)$ terms in the mass update of
\eqref{eq:ieu_FD_k} to obtain  
\begin{equation}
\label{eq:ieu_mass_0_0_FD}
\frac{\rho^{1}_{(0), i, j} - \rho^{n}_{(0), i, j}}{\Dlt} = - a_{11}
\rho^{1}_{(0), i,j} \frac{\mu_{x_m} \delta_{x_m}}{\Delta x_m}u^{1}_{m,(0), i,j}.
\end{equation}

If the boundary conditions of the problem are periodic or wall, it can
easily be seen that the right hand side of equation
\eqref{eq:ieu_mass_0_0_FD}, when summed over the entire domain,
vanishes to yield
\begin{equation}
\label{eq:ieu_mass_0_0_inte_disc}
\frac{\rho^{1}_{(0), i, j} - \rho^{n}_{(0), i, j}}{\Dlt} =  0.
\end{equation}
As a result, $\rho^{1}_{(0), i,j} =  \rho^{n}_{(0),i,j}$, for all
$i,j$. Since the zeroth order density $\rho^{n}_{(0)}$ is a constant,
$\rho^{1}_{(0),i,j}$  as well as $\rho^{1}_{(1),i,j}$ are
constants. Using this in equation \eqref{eq:ieu_mass_0_0_FD}, we
obtain the discrete divergence condition: 
\begin{equation}
\frac{\mu_{x_m} \delta_{x_m}}{\Delta x_m}u^{1}_{m,(0), i,j} = 0 
\end{equation} 
for the leading order velocity $\mbu^{1}_{(0)}$, i.e.\ the discrete
divergence vanishes for all $i,j $. This completes the proof
for $k = 1$. 

For $k = 2$ and then on, we can mimic the steps carried out in the
proof of Theorem~\ref{Asym_cons_SD} for each of the intermediate
stages, and conclude that the discrete gradient of the density
$\rho^k_{(0)}$, and the discrete divergence of the velocity
$\mbu^k_{(0)}$ vanish. Finally, we can prove that the numerical
solution at time $t^{n+1}$ is also well-prepared along similar lines
as in the proof of Theorem~\ref{Asym_cons_SD}. 



Summarising the above steps, we can clearly see that the limt of the
scheme \eqref{eq:ieu_FD_n+1} is a consistent discretisation of the
incompressible limit system \eqref{eq:incomp}. Hence, the fully-discrete
scheme \eqref{eq:ieu_FD_k}-\eqref{eq:ieu_FD_n+1} is asymptotically
consistent.   
\end{proof}
\subsubsection{Linearised $L^2$-stability Analysis}
We now proceed to establish the linear $L^2$-stability of the
fully-discrete IMEX-RK scheme
\eqref{eq:ieu_FD_k}-\eqref{eq:ieu_FD_n+1} by applying it to the linear
wave equation system. Analogous to Subsection~\ref{sec:line-l2-stab},
we obtain a sufficient condition for stability which involves only the
RK coefficients. The derivation of MPDE corresponding to a space-time 
fully-discrete general second order IMEX scheme turns out to be
extremely complicated and hence, we restrict ourselves only to a first 
order IMEX discretisation; see \cite{pareschi-russo-conf} for an
example of a first order IMEX scheme other than Euler(1,1,1).     
\begin{theorem}
  \label{ieu_Lin_stab_anF}
  A first order space-time fully-discrete IMEX-RK scheme given by
  \eqref{eq:ieu_FD_k}-\eqref{eq:ieu_FD_n+1} is linearly $L^2$-stable
  under a CFL-like condition $\Delta t=C\min(\Delta x_1,\Delta x_2)$,
  where $C$ is a constant independent of $\veps$, if
  \begin{equation}
    \label{eq:b12b32}
    b_1^{(2)} < 0, \ \mbox{and} \ b_3^{(2)} > 0,
  \end{equation}
  where the coefficients $b_1^{(2)}$ and $b_3^{(2)}$ are defined by
  \eqref{eq:def_b_4} in Appendix.   
\end{theorem}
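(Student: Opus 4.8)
The plan is to mirror the modified-equation argument used for the time semi-discrete scheme in Theorem~\ref{ieu_Lin_stab_an}, but now carrying the finite volume space discretisation along. First I would write the first order scheme, i.e.\ Euler$(1,1,1)$ in \eqref{eq:ieu_FD_k}--\eqref{eq:ieu_FD_n+1}, applied to the linear wave equation system \eqref{eq:WaveHL} in two dimensions: the convective operator $H$ is treated explicitly (forward Euler in time) with the Rusanov-type flux $\mcal{F}_m$, whose numerical viscosity scales with the advective wave speed $\abs{\bu}$ and is therefore independent of $\veps$, while the acoustic operator $L/\veps^2$ is treated implicitly (backward Euler in time) with the central flux $\mcal{G}_m$, which carries no numerical viscosity. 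Taylor expanding both the time stepping and the numerical fluxes around a smooth solution, and using the CFL-like relation $\Dlt = C\min(\Dlx,\Dly)$ to merge the temporal and spatial small parameters, yields a modified equation of the form
\[
  \Dt U + H(U) + \frac{1}{\veps^2}L(U) = \Dlt\, B^{(2)} U + \mcal{O}(\Dlt^2),
\]
where $B^{(2)}$ collects the second order spatial differential operators weighted by the coefficients $b^{(2)}_1,\dots,b^{(2)}_4$ from the Appendix. The structural point to be checked during the expansion is that, because the stiff acoustic flux is inverted implicitly, neither the $b^{(2)}_i$ nor the $\mcal{O}(\Dlt^2)$ coefficients carry a negative power of $\veps$; the only $\veps$-dependence enters through the nonnegative powers inherited from $L$. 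This is exactly what makes the admissible constant $C$ independent of $\veps$.

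Next I would estimate the energy $E=\Norm{U}^2$ from \eqref{eq:engys_defn} by differentiating, $\frac{dE}{dt}=2(U,\Dt U)$ in the weighted inner product \eqref{eq:U1U2ip}, and substituting the modified equation. The unmodified part contributes nothing: the constant-coefficient transport $\bu\cdot\grd$ is skew-adjoint, and, exactly as in the proof of Theorem~\ref{propLin}, the acoustic operator $L/\veps^2$ is skew-adjoint with respect to $(\cdot,\cdot)$ — this is precisely why $E$ is a conserved quantity of \eqref{eq:WaveHL}. Hence $\frac{dE}{dt}=2\Dlt\,(U,B^{(2)}U)+\mcal{O}(\Dlt^2)$. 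Integrating by parts and invoking the periodic boundary conditions, the contributions of odd-order spatial derivatives vanish, and one is left with a quadratic form built from $\norm{\D^\beta\rho}^2$, $\norm{\D^\beta\mbu}^2$ and mixed products $\langle\D^\beta\rho,\D^\gamma\mbu\rangle$, each weighted by one of the $b^{(2)}_i$ together with powers of $\abs{\bu}$, $\ba$, $\br$ and nonnegative powers of $\veps$.

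Finally I would make the right-hand side sign-definite. Splitting each mixed $\rho$--$\mbu$ product by Young's inequality between the two pure-square terms it connects, the quadratic form collapses to a combination in which one surviving coefficient is $b^{(2)}_1$ appearing directly (so it must be negative) and another is $b^{(2)}_3$ appearing after an integration by parts that flips its sign (so it must be positive); under the hypotheses $b^{(2)}_1<0$ and $b^{(2)}_3>0$ this principal part is nonpositive, and the $\mcal{O}(\Dlt^2)$ remainder — whose coefficients, by the structural observation above, are bounded uniformly in $\veps$ once $\Dlt\le C\min(\Dlx,\Dly)$ — is absorbed by taking $C$ small enough. Consequently $\frac{dE}{dt}\le 0$, and since $\Norm{\cdot}$ is equivalent to the norm of $\solspc$, the scheme is linearly $L^2$-stable.

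The main obstacle is the first step: the explicit derivation of the fully-discrete modified equation in two dimensions, tracking every cross term between the IMEX time integrator and the Rusanov/central finite volume operators, and then identifying which combinations of the $b^{(2)}_i$ actually govern the energy balance — indeed, it is precisely the unwieldiness of this computation already at second order that forces the restriction to a first order scheme here. A secondary, more delicate point is to confirm that the higher-order remainder terms are genuinely dominated under the stated $\veps$-independent CFL rather than requiring a $\veps$-dependent one; this again hinges on the implicit treatment of the stiff acoustic flux preventing inverse powers of $\veps$ from entering the modified equation.
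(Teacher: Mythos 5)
Your overall strategy is the paper's: derive the modified equation for the first order fully-discrete scheme applied to the linearised system, substitute it into $\frac{dE}{dt}=2(U,\Dt U)$ with the weighted inner product \eqref{eq:U1U2ip}, use skew-adjointness of the unmodified part, and sign the remaining quadratic form. However, there is a genuine gap in your final step: your sign accounting for the $b^{(2)}_1$ term is inverted, and as a consequence you misattribute the origin of the CFL condition. On the torus both $(\bu\cdot\nabla)^2$ and $\Delta$ are negative semi-definite, so $b^{(2)}_1\langle\rho,(\bu\cdot\nabla)^2\rho\rangle=-b^{(2)}_1\norm{(\bu\cdot\nabla)\rho}^2$ is \emph{nonnegative} precisely when $b^{(2)}_1<0$ (for Euler(1,1,1), $b^{(2)}_1=-\tfrac12$: forward Euler on pure advection is antidissipative, as classical von Neumann analysis confirms). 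So under the stated hypothesis the ``principal part'' is not nonpositive by itself; it contains a destabilising $\mcal{O}(\Dlt)$ contribution. What rescues the scheme is the Rusanov numerical viscosity $\tfrac12\Delta x_m\abs{\baru_m}\partial_{x_m}^2$, which enters $\mcal{B}^{(2)}$ as an $\mcal{O}(\Delta x)$ dissipative term of the \emph{same formal order} as the antidissipative $\Dlt\,b^{(2)}_1$ term once $\Dlt\sim\min(\Dlx,\Dly)$. The CFL-like condition is exactly the requirement that $\Delta x_m\abs{\baru_m}/2+\Dlt\norm{\bu}^2 b^{(2)}_1\geq 0$, i.e.\ a leading-order competition — not, as you claim, a device for absorbing $\mcal{O}(\Dlt^2)$ remainders. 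In your third paragraph the Rusanov viscosity never appears in the energy balance at all, so the argument as written cannot close: dropping it leaves $\frac{dE}{dt}>0$ for the advective modes.

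Two smaller points. First, your ``structural observation'' that the $b^{(2)}_i$ coefficients carry no negative powers of $\veps$ is literally false: the $b^{(2)}_3$ operators carry $\ba^2/\veps^2$ (and $\ba^4/(\br\veps^4)$ after weighting by the inner product). The correct statement, and the one the paper uses, is that these stiff terms appear with the coefficient $-b^{(2)}_3<0$ and are therefore dissipative for every $\veps$, which is why they impose no constraint on $\Dlt$ and the admissible constant $C$ is $\veps$-uniform; your sign logic for $b^{(2)}_3$ (flip under integration by parts, hence require $b^{(2)}_3>0$) is correct. Second, the Appendix defines only $b^{(2)}_1,b^{(2)}_2,b^{(2)}_3$ — there is no $b^{(2)}_4$ — and for Euler(1,1,1) the cross term carries $b^{(2)}_2=0$, which is why no Young-inequality splitting of mixed $\rho$--$\mbu$ products is ultimately needed in the first order case.
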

\begin{proof}
The MPDE for the IMEX-RK scheme
\eqref{eq:ieu_FD_k}-\eqref{eq:ieu_FD_n+1} can be obtained as 
\begin{align}
  \Dt \begin{pmatrix}
    \rho\\
    \mbu
  \end{pmatrix}
  +(\bu\cdot\nabla) \begin{pmatrix}
    \rho\\
    \mbu
  \end{pmatrix}
  + \begin{pmatrix}
    \br\dvg \mbu	\\
    \frac{\ba^2}{\br\veps^2} \nabla\rho
  \end{pmatrix}
  =\mcal{B}^{(2)} \begin{pmatrix}
    \rho\\
    \mbu
  \end{pmatrix},    \label{eq:ieu_mpde_FD}
\end{align}
where the operator $\mcal{B}^{(2)}$ is defined in Appendix. As done
in the case of the time semi-discrete scheme in
Theorem~\ref{ieu_Lin_stab_an}, we use the MPDE \eqref{eq:ieu_mpde_FD}
in the equation \eqref{Eq:ieu_EnergyNorm_TD} to calculate the rate of
change of energy. The right hand side of \eqref{Eq:ieu_EnergyNorm_TD}
then gives
\begin{equation}
  \label{eq:ieu_rho_dtrho_FD} 
  \begin{aligned}
    \left< \rho, \partial_t \rho  \right> &= \Dlt \left\{ b^{(2)}_1 \left< \rho,
        (\bu \cdot \nabla)^2
        \rho \right>  +  b^{(2)}_2
      \br \left< \rho,
        (\bu \cdot \nabla )
        \nabla \cdot \mbu\right> + b^{(2)}_3
      \frac{\ba^2}{\veps^2} \left< \rho, \Delta \rho \right>
    \right\} \\
    & \quad + \frac{1}{2} \left< \rho, \Delta x_1 \abs{{\baru}_1} \partial_{x_1}^2
      \rho + \Delta x_2 \abs{{\baru}_2} \partial_{x_2}^2
      \rho \right>,
  \end{aligned}
\end{equation}
\begin{equation}
  \label{eq:ieu_u_dtu_FD}
  \begin{aligned} 
    \left< \mbu, \partial_t \mbu  \right> &= \Dlt \left\{ b^{(2)}_1 \left< \mbu,
      (\bu \cdot \nabla)^2 \mbu \right>  +  b^{(2)}_2
    \frac{\ba^2}{\br \veps^2} \left< \mbu, (\bu \cdot \nabla )
      \nabla \rho\right> + 
    b^{(2)}_3 \frac{\ba^2}{\veps^2} \left< \mbu, \nabla (\nabla \cdot \mbu) \right>
    \right\}  \\ 
    &\quad+ \frac{1}{2} \left< \mbu, \Delta x_1 \abs{{\baru}_1} \partial_{x_1}^2
      \mbu + \Delta x_2 \abs{{\baru}_2} \partial_{x_2}^2 \mbu \right>.
  \end{aligned}
\end{equation}
Regrouping the terms in the above equations after applying the
Cauchy-Schwarz and AM-GM inequalities, and then using the inequalities
thus obtained in \eqref{Eq:ieu_EnergyNorm_TD}, finally yields
\begin{equation}
  \label{eq:ieu_energy_ineq_FD}
  \begin{aligned}
    \frac{dE}{dt} \leq & -\Dlt b^{(2)}_3 \left\{ \frac{2
      \ba^4}{\br\veps^4} \norm{\nabla \rho}^2 
    +  \frac{2 \ba^2 \br}{\veps^2} \norm{\nabla
    \cdot \mbu}^2 \right\}  - \left\{\left(\Delta x_1
    \frac{\abs{{\baru}_1}}{2}  + \Dlt \norm{\bu}^2 b^{(2)}_1\right)
  \left( \frac{2 \ba^2}{\br \veps^2}
    \norm{\partial_{x_1} \rho}^2  \right. \right.\\
  & \quad \left.\left.+ 2 \br \norm{\partial_{x_1} \mbu}^2\right) + 
    \left(\Delta x_2 \frac{\abs{{\baru}_2}}{2}  + \Dlt \norm{\bu}^2
      b^{(2)}_1 \right) \left( \frac{2 \ba^2}{\br \veps^2}
      \norm{\partial_{x_2} \rho}^2 + 2 \br \norm{\partial_{x_2}
      \mbu}^2\right)\right\}. 
  \end{aligned}
\end{equation}
If the coefficients $-b^{(2)}_1$ and $b^{(2)}_3$ are positive, it can
easily be seen that the overall expression on the right hand side of
the above inequality is negative under the following CFL-like
condition:
\begin{equation}
\Dlt \leq -b^{(2)}_1\frac{\min({\baru}_1, {\baru}_2)}{\norm{{\bu}}^2}
\min(\Delta x_1, \Delta x_2).
\label{eq:CFL_first_order}
\end{equation}
\end{proof}
\begin{remark}
  Note that $-b^{(2)}_1$ and $b^{(2)}_3$ are positive for
  Euler(1,1,1). An analogous $L^2$-stability analysis is carried out
  in \cite{dimarco_loubere_siamjsc_2017} and a condition similar to
  \eqref{eq:CFL_first_order} is obtained. Further, the analysis in
  \cite{dimarco_loubere_siamjsc_2017}  also show that more stringent
  conditions are to be enforced to obtain $L^\infty$-stability.  
\end{remark}
\subsection{$\mcal{E}$-invariance and Asymptotic Accuracy}
As done in Section~\ref{sec:sec5} for the time semi-discrete scheme,
we first formally prove that the fully-discrete scheme
\eqref{eq:ieu_FD_k}-\eqref{eq:ieu_FD_n+1} leaves $\mcal{E}$
invariant. To obtain the estimate \eqref{eq:ThmDelEst2}, we also carry
out an analogous MPDE analysis of the linearised scheme. Since the
proofs of the following results follow similar lines as that of their
semi-discrete counterparts, we do not intent to present the details.  
\begin{theorem}
 Suppose that at time $t^n$ the numerical solution $(\rho^n_{i,j},
 \mbu^n_{i,j})$ to the compressible Euler system \eqref{eq:comp} is in
 $\mcal{E}$, i.e.\ $\rho^n_{i,j}=\const$ and $\hat{\nabla}\cdot
 \mbu^n_{i,j}=0$ for all $i,j$. Then, at time $t^{n+1}$, the numerical
 approximation $(\rho^{n+1}_{i,j}, \mbu^{n+1}_{i,j})$ obtained from the
 scheme \eqref{eq:ieu_FD_k}-\eqref{eq:ieu_FD_n+1} satisfy     
 \begin{equation}    \label{eq:sem_disc_E_inv1}
	\rho^{n+1}_{i,j}=\const, \quad \frac{\mu_{x_m}
          \lambda_{x_m}}{\Delta x_m} u^{n+1}_{m, i, j}=0, \ \mbox{for all} \
        i,j.
      \end{equation}
In other words, the fully-discrete scheme
\eqref{eq:ieu_FD_k}-\eqref{eq:ieu_FD_n+1} keeps the well-prepared
space $\mcal{E}$ invariant. 
\end{theorem}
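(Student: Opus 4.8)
The plan is to follow, essentially line for line, the proof of Theorem~\ref{ieu_E_inv_TD} for the time semi-discrete scheme, replacing the continuous differential operators by their finite-volume analogues and arguing by induction on the Runge--Kutta stage index $k$. Throughout, let $\hat{\nabla}$ denote the discrete gradient built from the central differences $\frac{\mu_{x_m}\delta_{x_m}}{\Delta x_m}$ that the implicit central fluxes $\mcal{G}_m$ induce, so that $(\rho_{i,j},\mbu_{i,j})\in\mcal{E}$ precisely when $\rho_{i,j}=\const$ and $\hat{\nabla}\cdot\mbu_{i,j}=0$. A useful observation at the outset is that the explicit Rusanov flux $\mcal{F}_m$ enters the \emph{density} equation only through its numerical-viscosity jump $-\tfrac{\alpha_m}{2}(W^{k+}-W^{k-})$, since the physical density flux of $F_m$ vanishes; and the MUSCL (or CWENO) reconstruction of a grid-constant density is exactly constant, so that jump vanishes whenever the density it is reconstructed from is a grid constant.

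For the base case $k=1$ (a fully implicit step; for a type-CK scheme the first stage is trivial, so one starts at $k=2$) I would multiply the momentum component of \eqref{eq:ieu_FD_k} by $\veps^2$ and let $\veps\to0$ to isolate the stiff term, which yields $\hat{\nabla}p(\rho^{1}_{i,j})=0$; under periodic or wall boundary conditions this forces $\rho^{1}_{i,j}$ to be a grid constant. Substituting back into the mass component and summing over the periodic grid, the discrete divergence of $\mbq^{1}$ telescopes to zero, so $\rho^{1}$ has the same constant mean as $\rho^{n}$, i.e.\ $\rho^{1}_{i,j}=\rho^{n}_{i,j}=\const$; feeding this in once more gives $\hat{\nabla}\cdot\mbq^{1}_{i,j}=0$, hence $\hat{\nabla}\cdot\mbu^{1}_{i,j}=0$. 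For the inductive step $k\ge2$, assuming $\rho^{\ell}_{i,j}=\const$ and $\hat{\nabla}\cdot\mbu^{\ell}_{i,j}=0$ for all $\ell<k$, the same computation runs: the density jumps in the explicit Rusanov fluxes vanish by the observation above, and the $\veps^2$-scaled momentum update in the stiff limit gives $\sum_{l=1}^{k}a_{k,l}\hat{\nabla}p(\rho^{l}_{i,j})=0$, which together with $\hat{\nabla}p(\rho^{\ell})=0$ for $\ell<k$ and the invertibility of $A$ (type-A) or of $A_{s-1}$ (type-CK) yields $\hat{\nabla}p(\rho^{k}_{i,j})=0$, so $\rho^{k}_{i,j}=\const$. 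The mass equation then reduces to $\rho^{k}_{i,j}-\rho^{n}_{i,j}=-a_{k,k}\hat{\nabla}\cdot\mbq^{k}_{i,j}$ (the earlier-stage divergences vanishing because $\mbq^{\ell}=\rho^{\ell}\mbu^{\ell}$ with $\rho^{\ell}$ constant), whose left side is a grid constant of zero sum; hence $\hat{\nabla}\cdot\mbq^{k}_{i,j}=0$, $\rho^{k}_{i,j}=\rho^{n}_{i,j}$, and dividing by the constant $\rho^{k}$, $\hat{\nabla}\cdot\mbu^{k}_{i,j}=0$. Since the scheme is GSA, $W^{n+1}_{i,j}=W^{s}_{i,j}$, so the case $k=s$ gives $\rho^{n+1}_{i,j}=\const$ and $\hat{\nabla}\cdot\mbu^{n+1}_{i,j}=0$, which is \eqref{eq:sem_disc_E_inv1}.

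The step I expect to be the real obstacle is deducing from the discrete stiff-limit identity $\hat{\nabla}p(\rho^{k})=0$ that $\rho^{k}$ is a genuine constant on the mesh: the central-difference discrete gradient has a nontrivial kernel (checkerboard modes), so this does not follow from $\hat{\nabla}p(\rho^{k})=0$ alone, and one must instead invoke the full discrete elliptic reformulation (the analogue of \eqref{eq:rho_ieu_k_elliptic}) $-\frac{\Dlt^{2}a_{k,k}^{2}}{\veps^{2}}\,\hat{\nabla}\cdot\hat{\nabla}\,p(\rho^{k})+\rho^{k}=\hat{\rho}^{k}-\Dlt a_{k,k}\,\hat{\nabla}\cdot\hat{\mbq}^{k}$ together with the boundary conditions, precisely the point that is treated only formally already in the semi-discrete proof. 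A secondary point, in the spirit of Proposition~\ref{ieu_MPDE_inv}, is that this argument disregards the MPDE correction terms; to substantiate it for the linearised wave system I would Fourier-transform the fully-discrete MPDE \eqref{eq:ieu_mpde_FD}, reduce it to a $2\times2$ system for $(\hat{\rho},\uu{\xi}\cdot\hat{\mbu})$ with a symbol $M(\uu{\xi})$ of polynomials in $\uu{\xi}$, and check that $\hat{\mcal{E}}=\{\hat{\rho}=0,\ \uu{\xi}\cdot\hat{\mbu}=0\}$ is invariant under the corresponding linear flow, exactly as in Proposition~\ref{ieu_MPDE_inv}.
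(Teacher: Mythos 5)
Your proposal is correct and follows essentially the same route the paper intends: the paper omits the proof of this theorem, stating only that it ``follows similar lines'' as the semi-discrete Theorem~\ref{ieu_E_inv_TD}, and your stage-by-stage induction with the $\veps^2$-scaled momentum update, the telescoping discrete divergence under periodic or wall boundaries, and the GSA property is precisely that argument transplanted to the finite-volume setting. Your caveat about the checkerboard kernel of the central-difference gradient is a fair and genuine subtlety that the paper itself glosses over (cf.\ the analogous step in the proof of Theorem~\ref{Asym_cons_FD}), and your suggested remedy via the discrete elliptic reformulation is the right way to close it.
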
 
\begin{proposition}
  \label{ieu_MPDE_inv_FD}
  The modified equation system \eqref{eq:ieu_mpde_FD} for a
  first order IMEX-RK scheme
  \eqref{eq:ieu_FD_k}-\eqref{eq:ieu_FD_n+1}, applied to the linear wave
  equation system \eqref{eq:WaveHL}, is $\mcal{E}$-invariant, i.e.\ if
  the initial data $(\rho(0, \uu{x}), \mbu(0, \uu{x}))$ is in $\mcal{E}$, then the
  solution $(\rho(t, \uu{x}), \mbu(t, \uu{x}))$ of the MPDE
  \eqref{eq:ieu_mpde_FD} is in $\mcal{E}$ for all times $t > 0$.
\end{proposition}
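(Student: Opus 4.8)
The plan is to mirror the argument used for the semi-discrete MPDE in Proposition~\ref{ieu_MPDE_inv}, namely to pass to the spatial Fourier transform and reduce the $\mcal{E}$-invariance question to the structure of a $2\times 2$ linear ODE system in the Fourier mode $\uu{\xi}$. First I would take the Fourier transform of \eqref{eq:ieu_mpde_FD} in the space variable, obtaining an equation of the form
\begin{equation*}
  \Dt \begin{pmatrix} \hat{\rho} \\ \hat{\mbu} \end{pmatrix}
  + i(\bu\cdot\uu{\xi}) \begin{pmatrix} \hat{\rho} \\ \hat{\mbu} \end{pmatrix}
  + i\begin{pmatrix} \br\,\uu{\xi}\cdot\hat{\mbu} \\ \frac{\ba^2}{\br\veps^2}\uu{\xi}\hat{\rho} \end{pmatrix}
  = \hat{\mcal{B}}^{(2)}\begin{pmatrix} \hat{\rho} \\ \hat{\mbu} \end{pmatrix},
\end{equation*}
where $\hat{\mcal{B}}^{(2)}$ is the Fourier symbol of the operator $\mcal{B}^{(2)}$ given in the Appendix. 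As in the semi-discrete case, one has $(\rho,\mbu)\in\mcal{E}$ if and only if $(\hat{\rho},\hat{\mbu})\in\hat{\mcal{E}}$ with $\hat{\mcal{E}}$ as in \eqref{eq:hat_E}, so the only quantities that need to be controlled are $\hat{\rho}$ and the longitudinal component $\uu{\xi}\cdot\hat{\mbu}$.

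The key step is then to dot the $\hat{\mbu}$-equation with $\uu{\xi}$ and combine it with the $\hat{\rho}$-equation, producing a closed linear system
\begin{equation*}
  \Dt \begin{pmatrix} \hat{\rho} \\ \uu{\xi}\cdot\hat{\mbu} \end{pmatrix}
  = M(\uu{\xi}) \begin{pmatrix} \hat{\rho} \\ \uu{\xi}\cdot\hat{\mbu} \end{pmatrix}.
\end{equation*}
For this reduction to go through one must verify that $\hat{\mcal{B}}^{(2)}$, acting on a pair $(\hat\rho,\hat{\mbu})$, contributes to the $\hat\rho$-equation and to $\uu{\xi}\cdot\hat{\mbu}$ only through $\hat\rho$ and $\uu{\xi}\cdot\hat{\mbu}$ themselves — i.e.\ that the transverse part of $\hat{\mbu}$ never feeds back into these two scalars. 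This is where I would look closely at the explicit form of $\mcal{B}^{(2)}$: since it arises from a first-order IMEX discretisation of the wave operator $H+\veps^{-2}L$ plus the Rusanov-type numerical diffusion, every term in it is built from $(\bu\cdot\nabla)$, $\dvg\mbu$, $\nabla\rho$, $\Delta\rho$, $\nabla(\dvg\mbu)$ and the axis-aligned second-derivative diffusion $\Delta x_m|\baru_m|\partial_{x_m}^2$; each of these, in Fourier, is a scalar multiple of either $\hat\rho$, $\uu{\xi}\cdot\hat{\mbu}$, or (in the momentum slot) $\uu{\xi}\,\hat\rho$ and $\uu{\xi}(\uu{\xi}\cdot\hat{\mbu})$. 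Hence after dotting with $\uu{\xi}$ the system genuinely closes on $(\hat\rho,\uu{\xi}\cdot\hat{\mbu})$, and $M(\uu{\xi})$ is a matrix of polynomials in $\uu{\xi}$. Once the closed system is in hand, the conclusion is immediate: if $\hat\rho(0,\uu{\xi})=\uu{\xi}\cdot\hat{\mbu}(0,\uu{\xi})=0$ for every $\uu{\xi}$, then by uniqueness for the linear ODE $\hat\rho(t,\uu{\xi})=\uu{\xi}\cdot\hat{\mbu}(t,\uu{\xi})=0$ for all $t>0$, so the solution of \eqref{eq:ieu_mpde_FD} remains in $\mcal{E}$.

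The main obstacle I anticipate is precisely the bookkeeping needed to confirm the closure property of $\mcal{B}^{(2)}$: one has to check that the first-order modified-equation correction terms — in particular the mixed terms coupling $\rho$ and $\mbu$ and the directional numerical-diffusion terms, which are not rotationally invariant — do not generate a transverse-to-longitudinal coupling in the momentum equation after projecting onto $\uu{\xi}$. Because the diffusion acts along the coordinate axes rather than isotropically, this is slightly more delicate than in the semi-discrete analysis, but the curl-free structure of $\hat{\mcal E}$ (i.e.\ $\hat{\mbu}$ parallel to $\uu{\xi}$) should still be preserved since each diffusion term $\partial_{x_m}^2$ acts diagonally on the velocity components and hence maps $\uu{\xi}$-parallel fields to $\uu{\xi}$-parallel fields. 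Apart from this verification the argument is entirely parallel to that of Proposition~\ref{ieu_MPDE_inv}, so I would state the Fourier reduction, record the closed system, invoke ODE uniqueness, and omit the routine algebra, as the text already signals.
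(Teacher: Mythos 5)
Your proposal is correct and follows exactly the route the paper intends: the paper omits the proof of this proposition, stating only that it "follows similar lines" as the semi-discrete Proposition~\ref{ieu_MPDE_inv}, which is precisely the Fourier-reduction-to-a-closed-$2\times 2$-ODE argument you carry out. Your explicit check that the anisotropic diffusion $\tfrac{1}{2}\Delta x_k\abs{\baru_k}\partial_{x_k}^2\mbb{I}_2$ acts as a scalar multiple of the identity in Fourier space (so the system still closes on $\hat\rho$ and $\uu{\xi}\cdot\hat{\mbu}$) is the one genuinely new verification needed in the fully-discrete case, and you handle it correctly.
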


\section{Numerical Case Studies}
\label{sec:sec7}

In this section, we present the results of our numerical experiments in
order to substantiate the claims made in the previous sections.
In the first test problem, we consider an advecting vortex for which a
smooth exact solution is available. We make use of the exact solution
to test the experimental order of convergence (EOC) for different
$\veps$. The results obtained clearly demonstrate the uniform second
order convergence with respect to $\veps$. In addition, the results also
show that the dissipation of the scheme is independent of $\veps$. In
the second test problem, we start with an exact analytical solution of
the incompressible equations, and use this solution to measure the
convergence of the numerical solution to the incompressible
solution. The convergence study clearly yields second order
convergence for very small values of $\veps$, or the stiff accuracy in
the incompressible regime. In the last problem, we consider a
discontinuous solution corresponding to a cylindrical explosion
therein we set $\veps=1$. The scheme captures the shock wave and gives
a good performance in the compressible regime as well.  
  
\subsection{Experimental Order of Convergence}
\label{sec:problem-1}
Drawing inspiration from the traveling vortex problem studied in
\cite{yelash}, we appropriate the initial conditions for the isentropic
Euler system as follows.  
 \begin{equation*}
  \begin{aligned}
    \rho(0,\uu{x}) &= 1.9+ \left(\frac{\Gamma \eta }{\omega} \right)^2 \left(
      k( \omega r ) - k (\pi) \right) \chi_{\omega r \leq \pi} , \\
    u_1(0, \uu{x}) &=0.6 + \Gamma ( 1 + \cos (\omega r)) (0.5 - x_2)
    \chi_{\omega r \leq \pi}, \\
    u_2 (0, \uu{x}) &= \Gamma ( 1 + \cos (\omega r)) (x _1 - 0.5)
    \chi_{\omega r \leq \pi},
  \end{aligned}
\end{equation*}
where $r = \norm{\uu{x} - (0.5, 0.5)}$, $\Gamma = 1.5$, $\omega = 4 \pi$,
and $k(r) = 2 \cos r + 2r \sin r + \frac{1}{8}\cos(2r) +
\frac{1}{4}r\sin(2r) + \frac{3}{4} r^2$. Here, $\Gamma$ is a parameter
known as the vortex intensity, $r$ denotes the distance from the core
of the vortex, and $\omega$ is an angular wave frequency specifying
the width of the vortex. The Mach number $\veps$ is controlled by
adjusting the value of $\eta$ via the relation $\veps = {0.6\eta}/{\sqrt{110}}$. 

The above problem admits an exact solution 
$(\rho,u_1,u_2)(t,x_1,x_2)=(\rho,u_1,u_2)(0,x_1-0.6t,x_2)$. The
computational domain $\Omega = [0,1] \times [0,1]$ is
successively divided to $10\times 10,20\times 20$ up to $80\times
80$ square mesh cells. All the four boundaries are set to be
periodic. The computations are performed up to a time $T=0.1$ using
the second order ARS(2,2,2) scheme. The CFL number is set as
$0.45$. The EOC computed in $L^1$ and $L^2$ norms using the above
exact solution, for $\veps$ ranging in
$\{10^{-6},10^{-5},\ldots,10^{-1}\}$, are given in
Table~\ref{tab:eocepsm}. The table clearly shows
uniform second order convergence of the scheme with respect to
$\veps$.   
\begin{table}[htbp]
  \centering
  \begin{tabular}[htbp]{|c|c|c|c|c|c|c|c|c|}
    \hline
    $N$ & $L^1$ error in $u_1$ & EOC & $L^2$  error in $u_1$ & EOC &
                                                                     $L^1$
                                                                     error
                                                                     in
                                                                     $u_2$
    & EOC& $L^2$ error in $u_2$ & EOC \\ 
    \hline 

10 & 4.9120e-03 & &8.8088e-03 &  &7.9419e-03& &1.7131e-02 & \\
\hline
20 & 1.3454e-03 &	1.8682& 2.5016e-03 & 1.8160 &
                                                                   2.6670e-03
    & 1.5742 &  5.7609e-03 & 1.5722 \\
\hline
40 & 3.1818e-04 &	2.0800 & 6.5165e-04 & 1.9406 &
                                                                     6.5744e-04
    & 2.0203 & 1.4305e-03 & 2.0097\\
\hline
80 & 8.0467e-05 & 1.9834 & 1.8591e-04 & 1.8094 &
                                                                   1.6067e-04
    & 2.0327 &3.6866e-04 & 1.9561 \\
\hline
\hline
10 & 4.9125e-03 & & 8.8099e-03  & & 7.9430e-03 & &
                                                             1.7131e-02 & \\
\hline
20 & 1.3445e-03 & 1.8693 & 2.5017e-03 & 1.8162 &
                                                         2.6657e-03
    & 1.5751 & 5.7604e-03 & 1.5724 \\
\hline
40 & 3.1752e-04 & 2.0822 & 6.5147e-04 & 1.9411 &
                                                         6.5705e-04
    & 2.0205 & 1.4302e-03 & 2.0099\\
\hline
80 & 7.7912e-05 &	2.0270 & 1.8559e-04 & 1.8116&
                                                          1.5926e-04
    & 2.0446 & 3.6858e-04 &1.9562\\ 
    \hline
    \hline 
    10 & 4.9125e-03&  & 8.8099e-03  & & 7.9430e-03&
          &1.7131e-02 & \\
\hline
20 & 1.3445e-03& 1.8693 & 2.5017e-03 & 1.8162 &
                                                        2.6657e-03
    & 1.5751  & 5.7604e-03 & 1.5724 \\
\hline
40 & 3.1752e-04 & 2.0822 &	6.5147e-04 & 1.9411 &
                                                          6.5705e-04
    & 2.0205 & 1.4302e-03 & 2.0099\\
\hline
80 & 7.7911e-05 &  2.0270 & 1.8558e-04& 1.8116 &
                                                         1.5926e-04
    &  2.0446 & 3.6858e-04 &	1.9562 \\
    \hline
    \hline
10 & 4.9125e-03 & &8.8099e-03 & & 7.9430e-03 & & 1.7131e-02 &  \\
\hline
20 & 1.3445e-03 & 1.8693 &2.5016e-03 & 1.8162 & 2.6657e-03 & 1.5751 &
                                                                      5.7604e-03 &1.5724 \\ 
\hline
40 & 3.1740e-04 & 2.0827 & 6.5149e-04 & 1.9411& 6.5702e-04
    &2.0205 &1.4302e-03 &2.0099 \\ 
\hline
80 & 7.7824e-05 &	2.0280 & 1.8558e-04 & 1.8117 &
                                                           1.5923e-04
    & 2.0448 & 3.6854e-04 & 1.9563\\  
    \hline
    \hline 
10 & 4.9117e-03 &	&8.8096e-03 & &  7.9411e-03 &
          &1.7130e-02 & \\
\hline
20 & 1.3433e-03 & 1.8704 & 2.5006e-03 &1.8167 & 2.6647e-03
    & 1.5753 & 5.7578e-03 & 1.5729\\
\hline
40 & 3.1573e-04 & 2.0890 & 6.5096e-04 &1.9416 &
                                                                  6.5528e-04
    & 2.0237 & 1.4272e-03 & 2.0123 \\ 
\hline
80 & 8.1065e-05 & 1.9615 &  1.8943e-04 & 1.7808 &
1.5865e-04 & 2.0462 & 3.6843e-04 &1.9537
    \\
    \hline
    \hline
10 & 4.9009e-03& & 8.7985e-03& & 7.9211e-03 & &1.7111e-02&  \\
\hline			
20 & 1.3395e-03 &	1.8713 & 2.5124e-03 & 1.8081 & 2.6297e-03
    & 1.5907 & 5.7013e-03 &1.5856\\
\hline
40 & 3.4569e-04 & 1.9541 & 7.1801e-04 & 1.8070 & 6.4199e-04
    & 2.0342 & 1.4288e-03 & 1.9964 \\ 
\hline 
80 & 1.1985e-04 & 1.5282 & 2.6469e-04 &1.4396 & 1.7083e-04&
                                                                 1.9099& 3.9443e-04 & 1.8570\\
\hline
  \end{tabular}
  \caption{$L^1$ and $L^2$ errors in $u_1$ and $u_2$ and the EOC 
    computed for Problem~\ref{sec:problem-1}. Top to bottom:
    $\veps=10^{-6}$ to $\veps=10^{-1}$.}
  \label{tab:eocepsm}
\end{table}

Next, we give in Figure~\ref{fig:inimach}, the pcolor plot of the
initial Mach number distribution at time $T=0$. In
Figure~\ref{fig:mach}, the pcolor plots of physical Mach numbers at time
$T=1.67$, which is the taken by the vortex to complete one cycle, for
the above range of values of $\veps$ is given. Comparing each of the
Mach profiles in Figure~\ref{fig:mach} with the initial one in
Figure~\ref{fig:inimach}, it is very evident that the vortex does not
deform or degrade much as it advects. Visually, one cannot observe any
differences among the plots in Figure~\ref{fig:mach}, which confirms
that the dissipation of the vortex is independent of $\veps$. This
fact is confirmed also by the behaviour of the relative kinetic
energy, and the vorticity for different $\veps$. In
Figure~\ref{fig:ke_vort}, the distribution of the relative kinetic
energy as a function of time from $T=0$ to $T=1.67$, and the cross
section of the vorticity at $x_2=0$ and $T=1.67$ as a function of
$x_1$ is presented. The plots show that the relative kinetic energy is
almost equal to one, independent of $\veps$. It is quite remarkable
that even for $\veps=10^{-6}$, the energy dissipation is only
$0.03\%$. Similarly, the vorticity plot also confirms that the
dissipation is almost independent of $\veps$.     

\begin{figure}[htbp]
  \centering
  \includegraphics[ scale = 0.45, trim={1cm 0 1.7cm 0},
  clip]{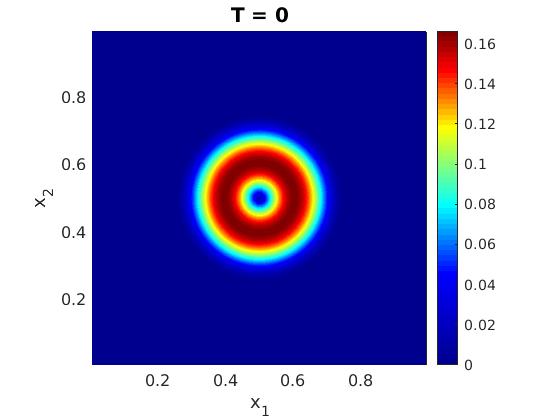}
\caption{Pcolor plots of the initial Mach number profile for the vortex
   problem.} 
  \label{fig:inimach}
\end{figure}
\begin{figure}[htbp]
  \centering
  \includegraphics[scale = 0.36, trim={1cm 0 3.7cm 0},
  clip]{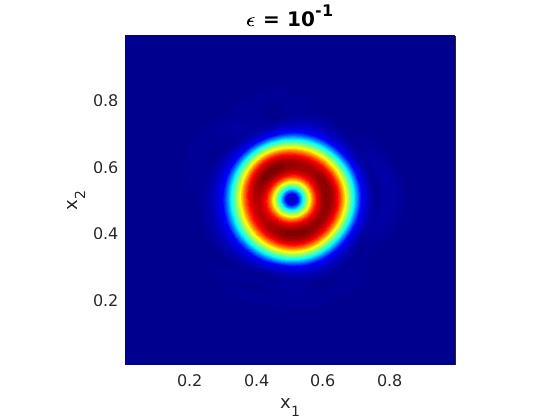} 
\hspace{-2mm}
\includegraphics[ scale = 0.36, trim={4.2cm 0 3.5cm 0},
clip]{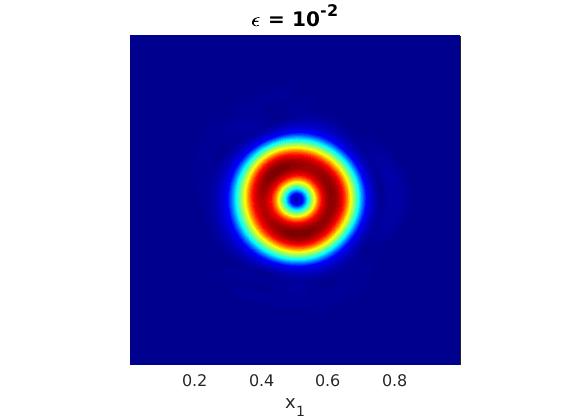}
\hspace{-2mm}
\includegraphics[ scale = 0.36, trim={4cm 0 1.7cm 0},
clip]{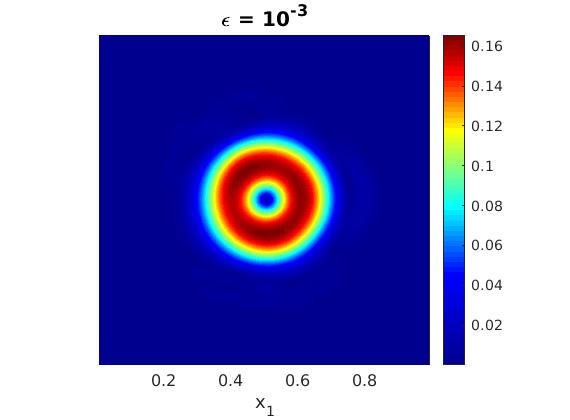}
\includegraphics[ scale = 0.36, trim={1cm 0 3.6cm 0},
  clip]{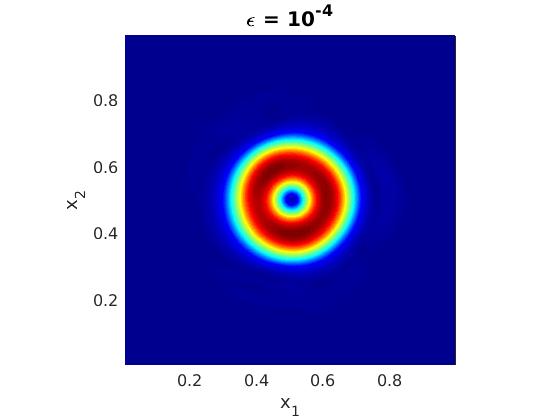}
\hspace{-2mm}
\includegraphics[ scale = 0.36, trim={4.1cm 0 3.4cm 0},
clip]{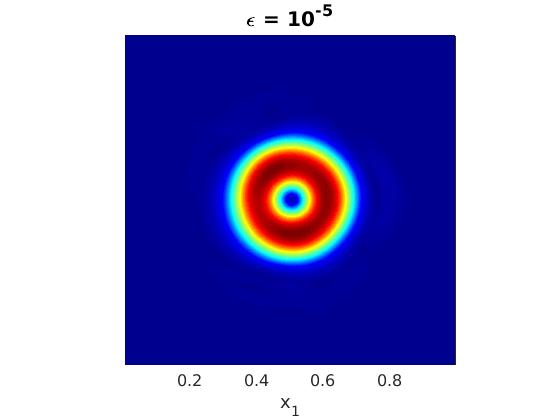}
\hspace{-2mm}
\includegraphics[ scale = 0.36, trim={3.9cm 0 1.7cm 0},
clip]{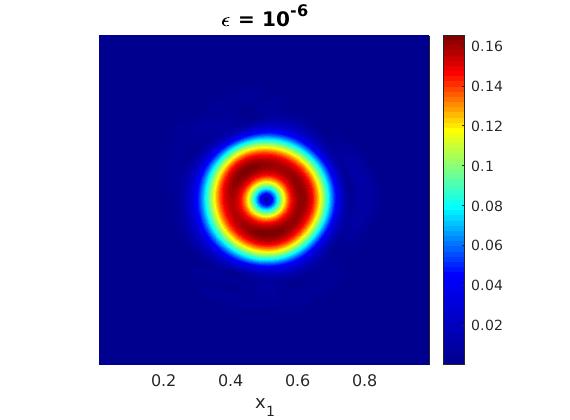}
  \caption{Pcolor plots of the Mach number profiles for the
    vortex problem at $T = 1.67$ for  $\veps \in \{ 10^{-1}, 10^{-2},  10^{-3},
   10^{-4},10^{-5}, 10^{-6}\}$.} 
  \label{fig:mach}
\end{figure}

\begin{figure}[htbp]
  \centering
  \includegraphics[height=0.23\textheight]{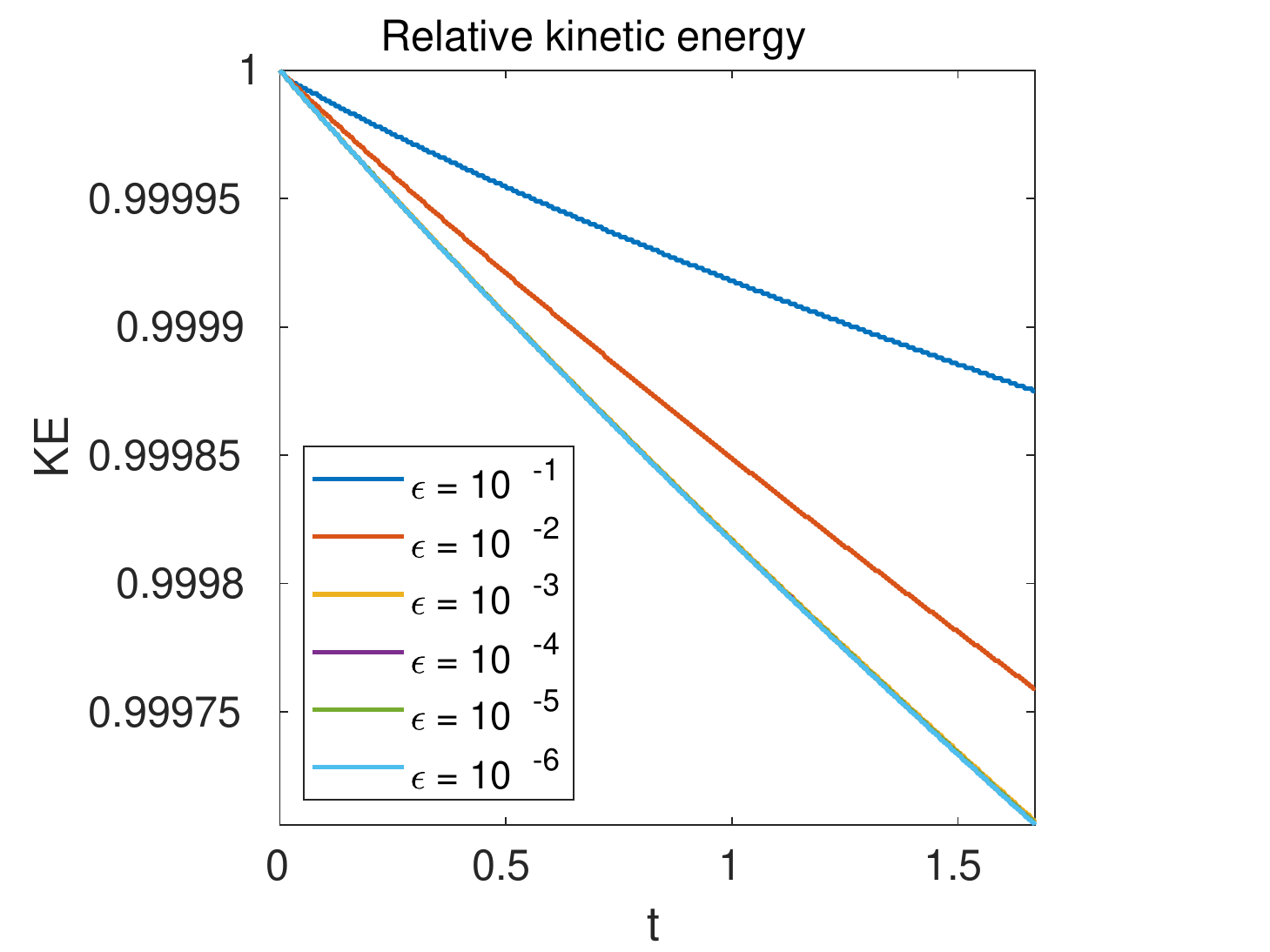}
  \includegraphics[height=0.23\textheight]{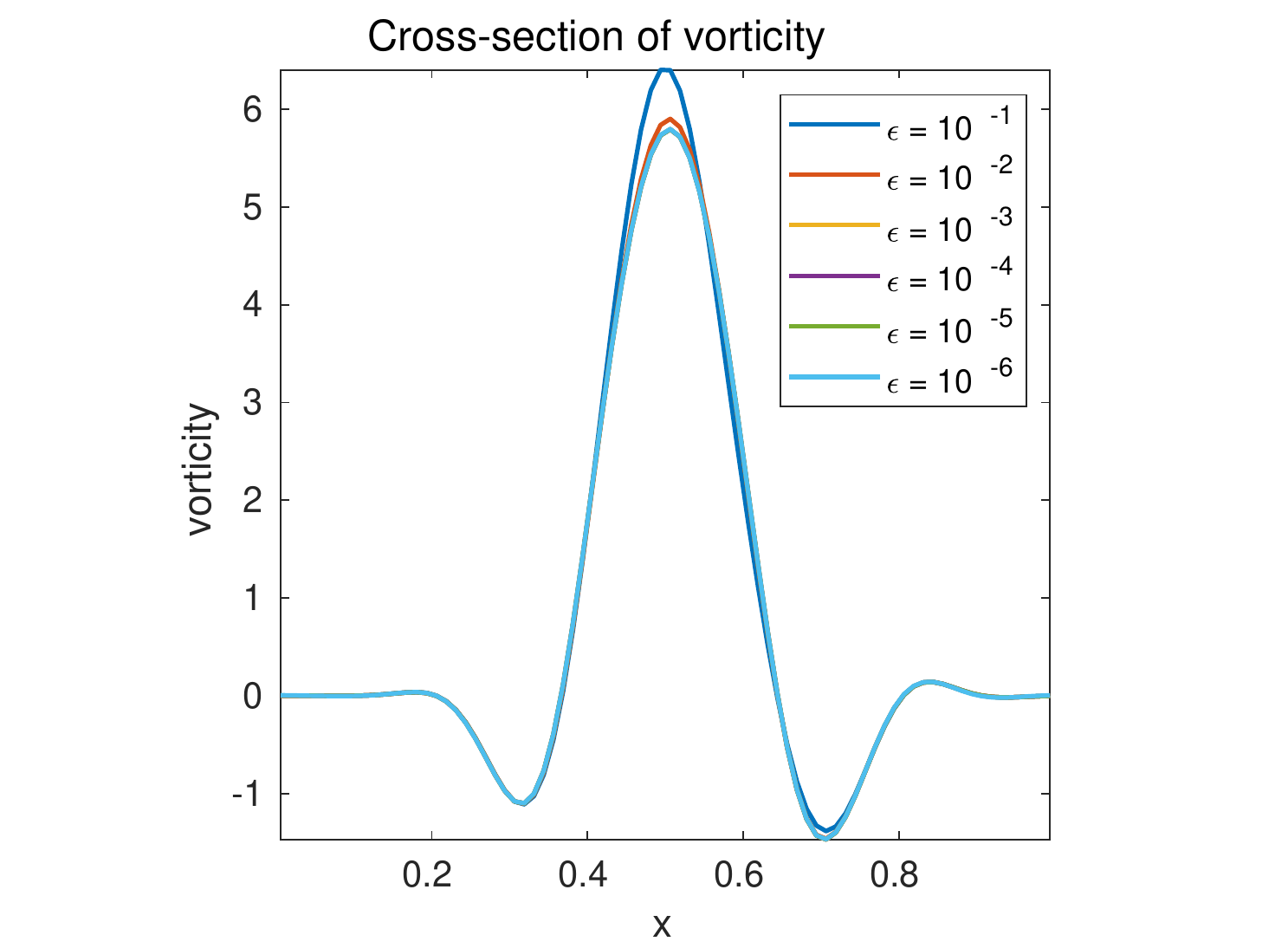}
  \caption{Left: relative kinetic energy from $T=0$ to $T=1.67$ for
    different $\veps$. Right: vorticity at $x_2=0$ and $T=1.67$ for
    different $\veps$.}
  \label{fig:ke_vort}
\end{figure}

\subsection{Asymptotic Order of Convergence}
\label{sec:problem-2}
The aim of this case study is to demonstrate the asymptotic
convergence of the scheme, i.e.\ its ability to converge to the
incompressible solution as $\veps\to0$, and to show that the
convergence rate is two.

We consider the following exact solution:
\begin{equation}
  \label{eq:schneider}
  \begin{aligned}
    u_{1,(0)}(t, x_1, x_2)&=1-2\cos(2\pi(x_1 - t))\sin(2\pi(x_2-t)), \\
    u_{2,(0)}(t, x_1, x_2)&=1+2\sin(2\pi(x_1 - t))\cos(2\pi(x_2-t)), \\
    p_{(2)}(t, x_1, x_2) &=-\cos(4\pi(x_1-t))-\cos(4\pi(x_2-t))    
  \end{aligned}
\end{equation}
of the incompressible system \eqref{eq:incomp} as given in
\cite{schneider}, with $\rho_{(0)}(t, x_1, x_2)=1$. The computational
domain $\Omega=[0,1]\times[0,1]$ is divided into
$10\times10,20\times20,40\times40,80\times80$ mesh cells and the CFL
number is set to $0.45$. The data are initialised using \eqref{eq:schneider}
at $t=0$ and the computations are done up to a final time $T=3$. The
boundaries are periodic everywhere. The EOC computed using the exact
solution \eqref{eq:schneider} as the reference solution is termed as the
asymptotic order of convergence (AOC). Our simulation results show
that the density remains exactly as $\rho\equiv 1$, and we use the
velocity components to measure the AOC. In Table~\ref{tab:aocepsm}, we
present the AOC computed using three very small values of $\veps$,
namely $10^{-6},10^{-5}$ and $10^{-4}$. It is very clear from the data
that the scheme indeed converges to the incompressible solution with
second order accuracy as $\veps\to0$. It is also worth remarking that
the present results are in conformity with the fact that the variant
ARS(2,2,2) used here is GSA \cite{AscherRuuthSpiteri}. We have also
tested non-GSA variants, such as PR(2,2,2), and the results show only
marginal differences.    
\begin{table}[htbp]
  \centering
  \begin{tabular}[htbp]{|c|c|c|c|c|c|c|c|c|}
    \hline
    $N$ & $L^1$ error in $u_1$ & AOC & $L^2$ error in $u_1$ & AOC &
    $L^1$ error in $u_2$ & AOC & $L^2$ error in $u_2$ & AOC \\
    \hline
    10 & 7.5900e-01 & & 9.3643e-01 & & 7.5900e-01 & &
    9.3644e-01 &  \\
    \hline
    20 & 2.7311e-01 & 1.4746 & 3.3303e-01 & 1.4915 & 2.7311e-01
    & 1.4746 & 3.3303e-01 & 1.4915\\
    \hline
    40 & 6.4266e-02 & 2.0874 & 7.5131e-02 & 2.1482 & 6.4266e-02
    & 2.0874 & 7.5131e-02 & 2.1482\\
    \hline
    80 & 1.5283e-02 & 2.0721 & 1.7248e-02 & 2.1230 & 1.5283e-02
    & 2.0721 & 1.7248e-02 & 2.1230\\
    \hline    \hline 
10 & 7.5900e-01 &  & 9.3643e-01 & & 7.5900e-01 & &
 9.3643e-01 &   \\
\hline 
20 & 2.7308e-01 & 1.4748 &	3.3299e-01 & 1.4917 &
                                                          2.7308e-01
    & 1.4748  & 3.3299e-01 & 1.4917 \\
\hline
40 & 6.4183e-02 & 2.0890 & 7.5043e-02 & 2.1497 &
                                                         6.4183e-02
    & 2.0890 & 7.5043e-02 & 2.1497\\ 
\hline
80 & 1.5198e-02 &	2.0783 & 1.7154e-02 & 2.1292 &
                                                           1.5198e-02
    & 2.0783 & 1.7154e-02 & 2.1292 \\ 
\hline \hline 
10 & 7.5900e-01 & &  9.3643e-01  & & 7.5900e-01 & &
                                                              9.3643e-01&
           \\
\hline
20 & 2.7302e-01 & 1.4751 & 3.3292e-01 & 1.4920 &
                                                         2.7302e-01
    & 1.4751 &3.3292e-01 & 1.4920\\
\hline
40 & 6.3787e-02 &	 2.0977 & 7.4589e-02 &	2.1582 &
                                                                 6.3787e-02
    & 2.0977 & 7.4589e-02 & 2.1582 \\
\hline
80 & 1.3905e-02 & 2.1976 & 1.5690e-02 & 2.2490 &
                                                         1.3905e-02
    & 2.1976 & 1.5690e-02 & 2.2490 \\
\hline
\end{tabular}
  \caption{ $L^1$ and $L^2$ errors in $u_1$ and $u_2$, and the AOC for 
    Problem~\ref{sec:problem-2}. Top to bottom: $\veps=10^{-6}$ to
    $\veps=10^{-4} $.} 
  \label{tab:aocepsm}
\end{table}
\subsection{Cylindrical Explosion Problem}
\label{sec:problem-4}
We consider a 2-D cylindrical explosion problem motivated by 
\cite{boscarino_weno_jcp_19, dimarco_loubere_siamjsc_2017} for the
Euler equations \eqref{eq:comp} with a linear relation between the
pressure and density $P(\rho) = \rho$. 

The computations are carried out on the square $\Omega =
[-1,1]^2$. The initial density profile reads
\begin{equation}\label{eq:prob4_density}
\rho(0, x_1, x_2) = \begin{cases}
  1 + \veps^2 , &  \text{if} \ r^2 \leq 1/4,\\
  1, & \text{otherwise}. 
\end{cases}
\end{equation}
In \eqref{eq:prob4_density}, $r = \sqrt{x_1^2 + x_2^2}$ is the distance
of any point $(x_1, x_2)$ from the origin $(0, 0)$. At  time $T = 0$,
the velocity of the fluid is given by 
\begin{equation}
u_1(0, x_1, x_2) = -\frac{\alpha(x_1, x_2)}{\rho(0, x_1, x_2)}
\frac{x_1}{r} ,  \quad
u_2(0, x_1, x_2) = -\frac{\alpha(x_1, x_2)}{\rho(0, x_1, x_2)}
\frac{x_2}{r},
\end{equation}
where the coefficient $\alpha(x_1, x_2)$ is given by $\alpha := \text{max} (0,
1 - r) (1 - e^{-16 r^2})$ and $(u_1, u_2)$ is set to $(0, 0)$, if $r <
10^{-15}$. All the boundaries are assumed to be periodic. The domain is
divided uniformly by a $100\times 100$ mesh. We have used the
JIN(2,2,2) variant to do the time discretisation.  

In order to simulate a compressible regime, we first set
$\veps=1$. The surface plots of the density, and quiver plots of the
velocity field at times $T=0.1,0.24$ and $0.5$ are given in
Figure~\ref{fig:sod}. Clearly, the circular shockwave moving outwards
can be observed in the both the figures, confirming the good
performance of the scheme in the fully compressible case.    
\begin{figure}[htbp]
  \centering
  \includegraphics[height=0.18\textheight]{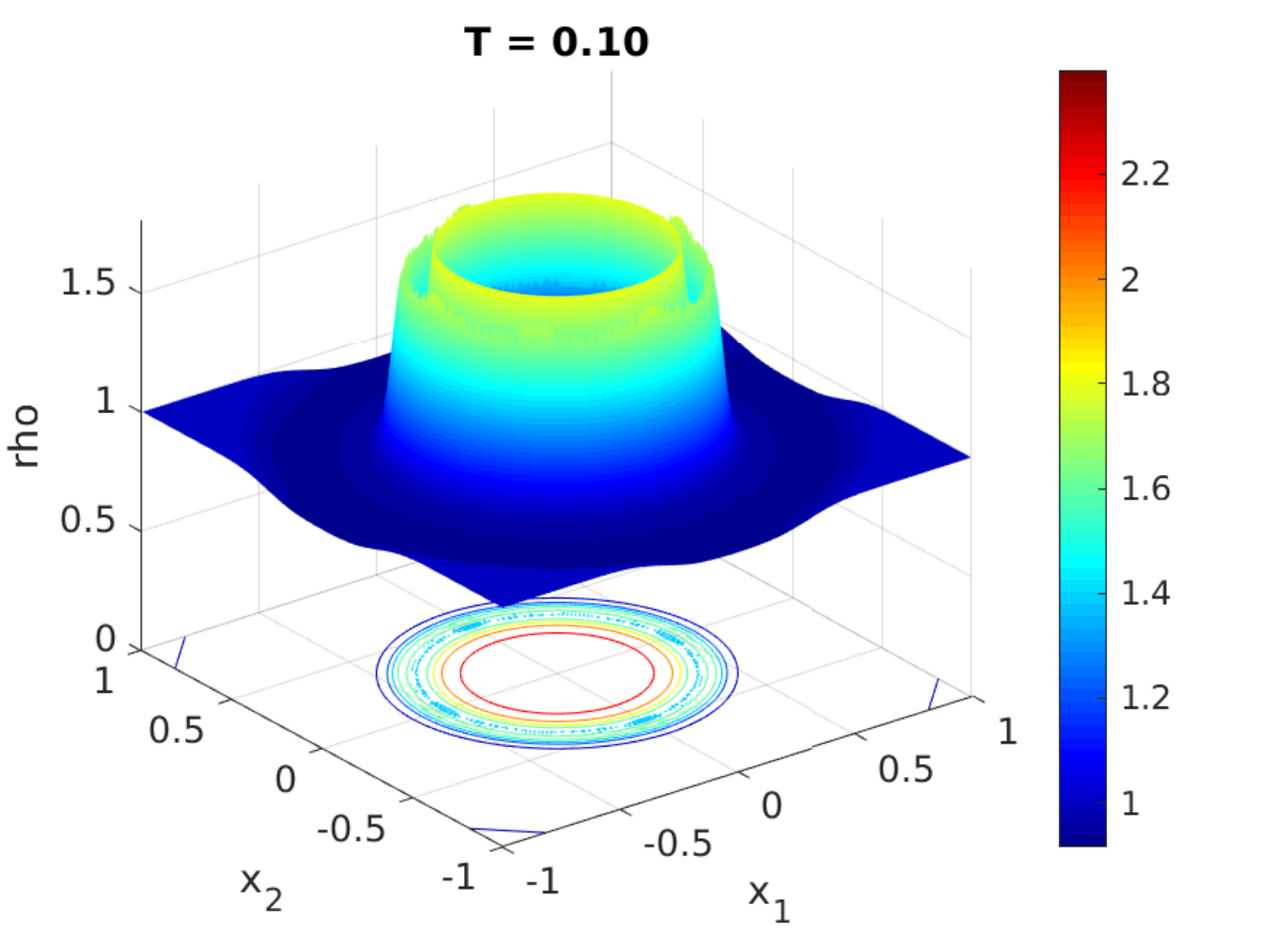}
  \includegraphics[height=0.18\textheight]{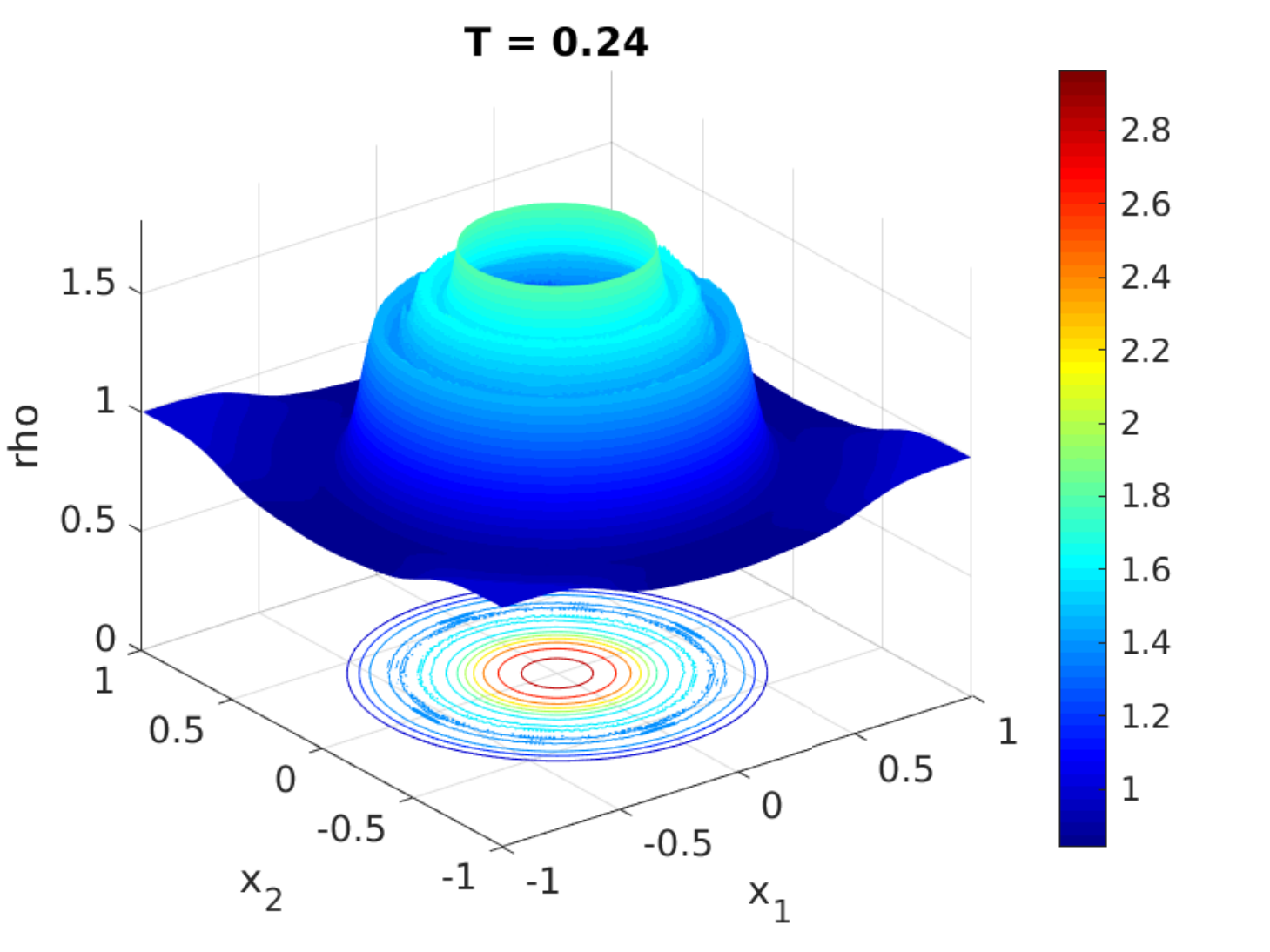} 
  \includegraphics[height=0.18\textheight]{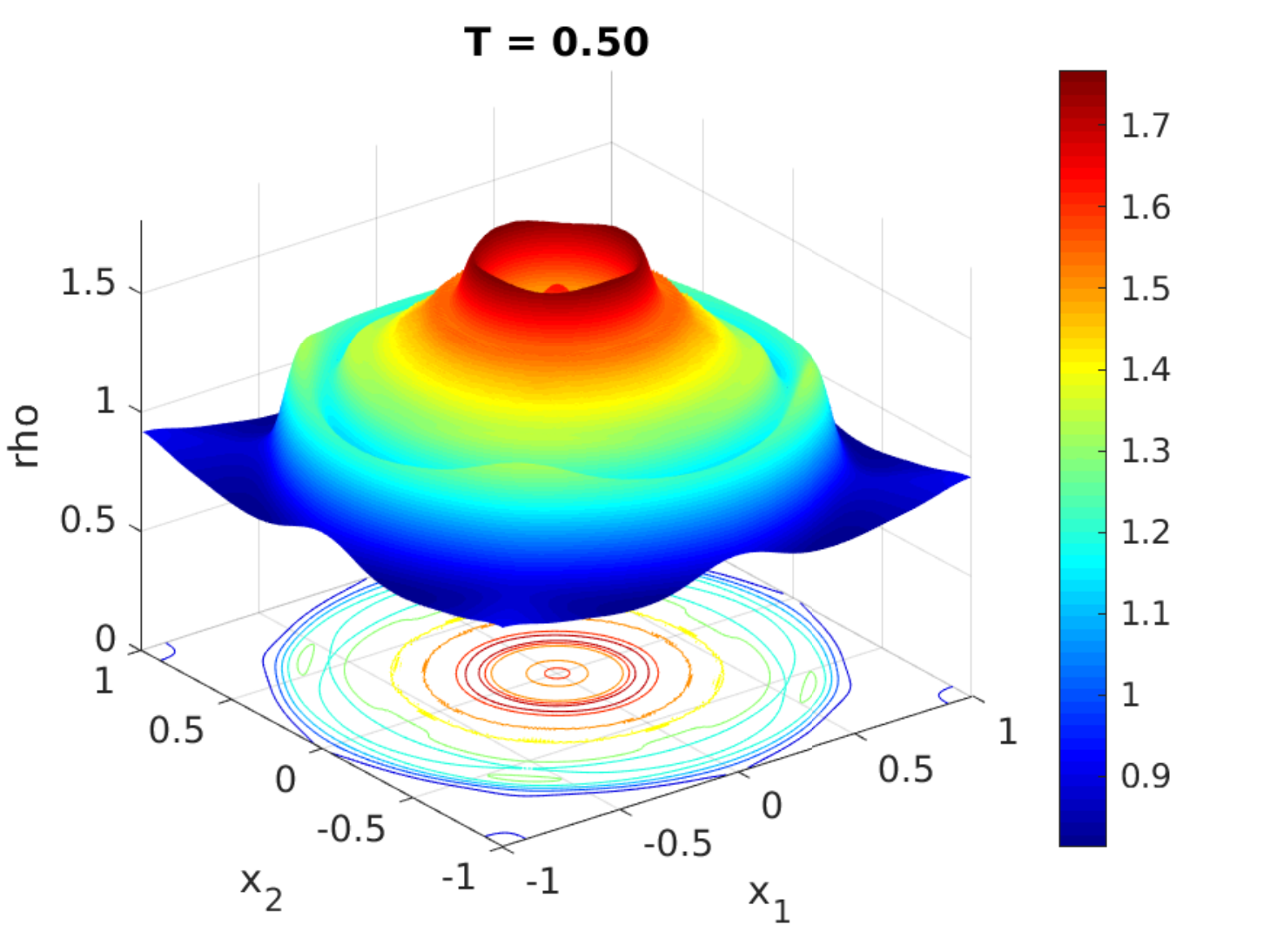}
  \includegraphics[height=0.18\textheight, trim={3cm 0 0cm 0}]{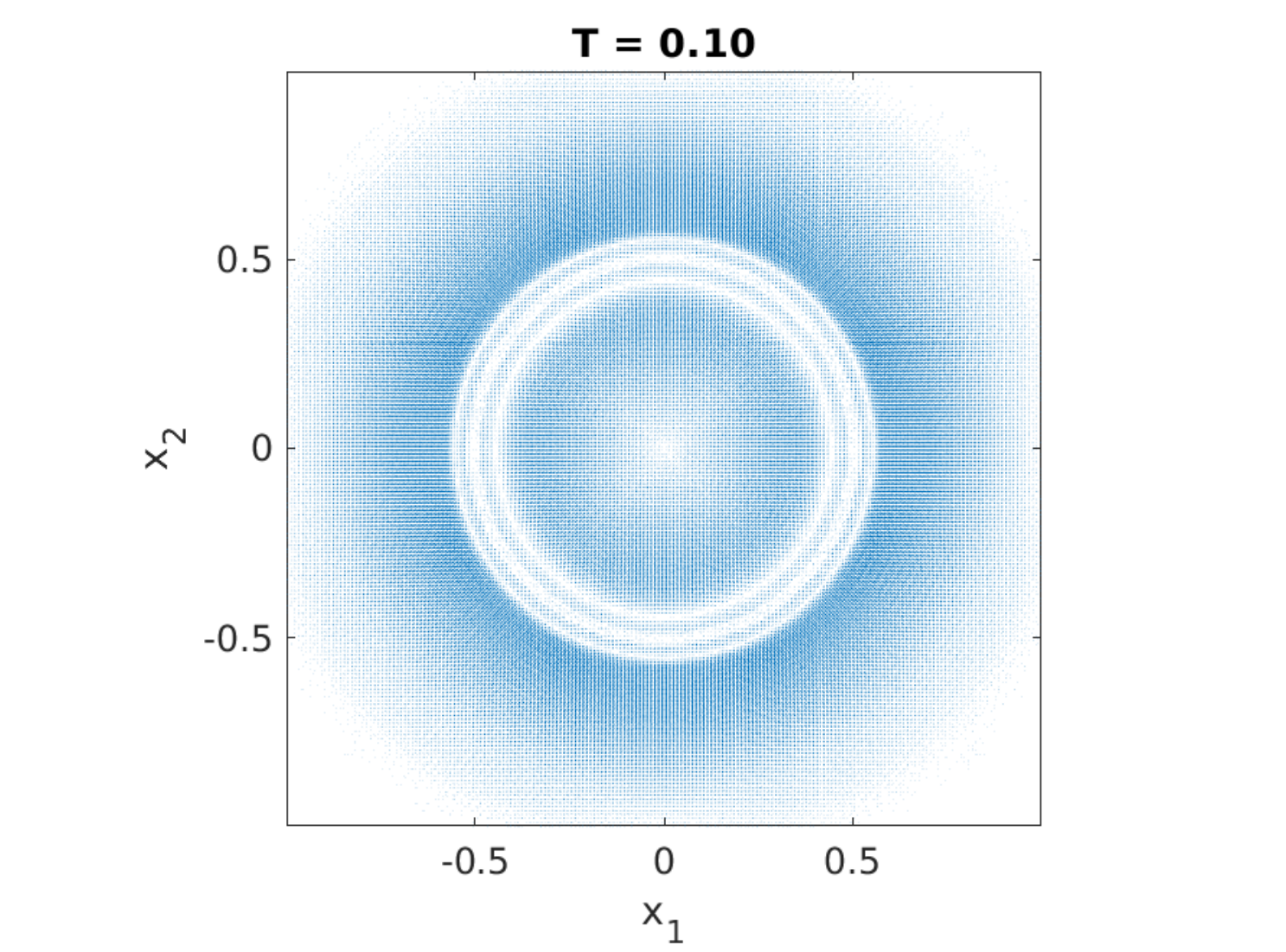}
  \includegraphics[height=0.18\textheight, trim={0.5cm 0 0cm 0}]{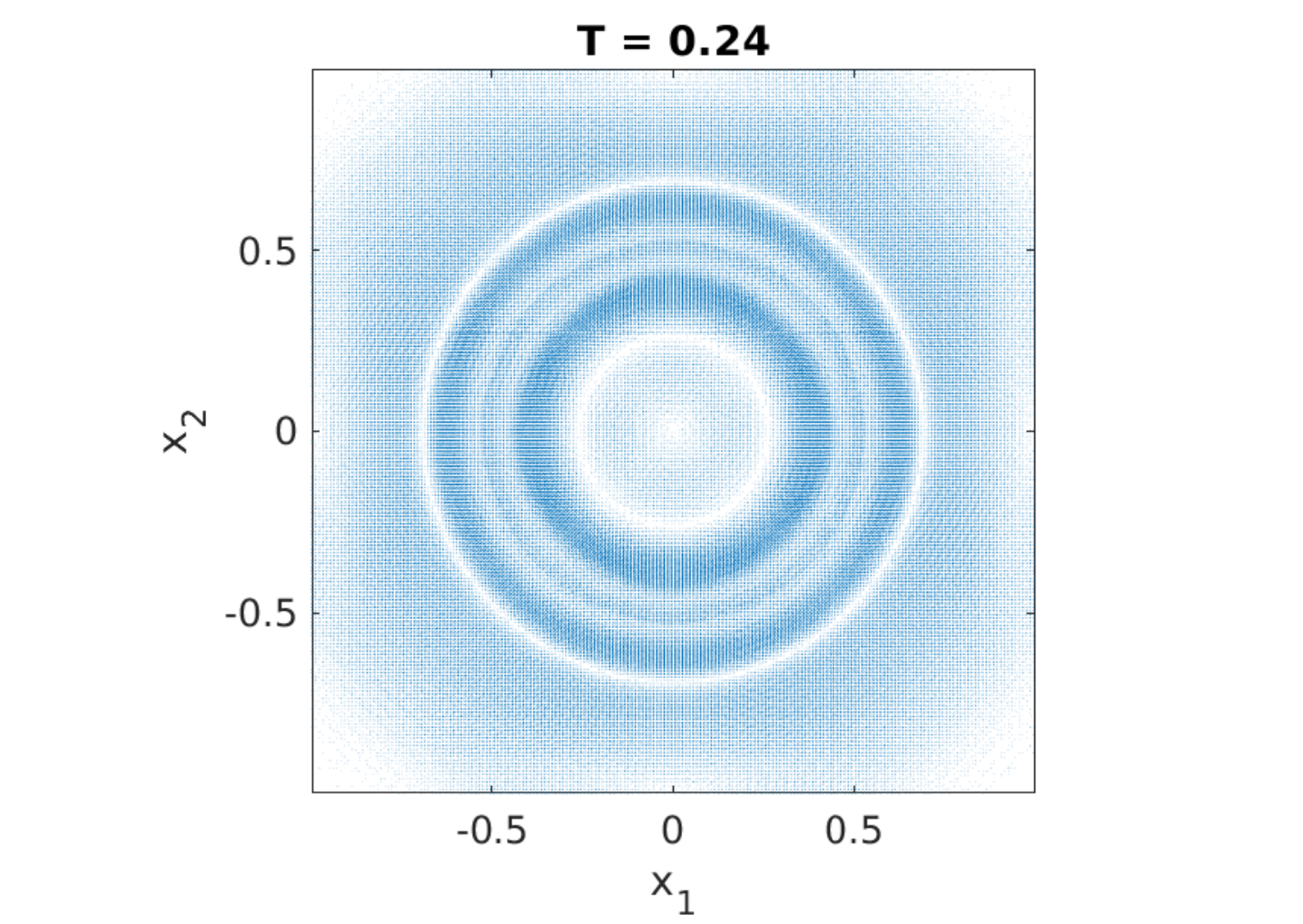} 
  \includegraphics[height=0.18\textheight, trim={0.5cm 0 0cm 0}]{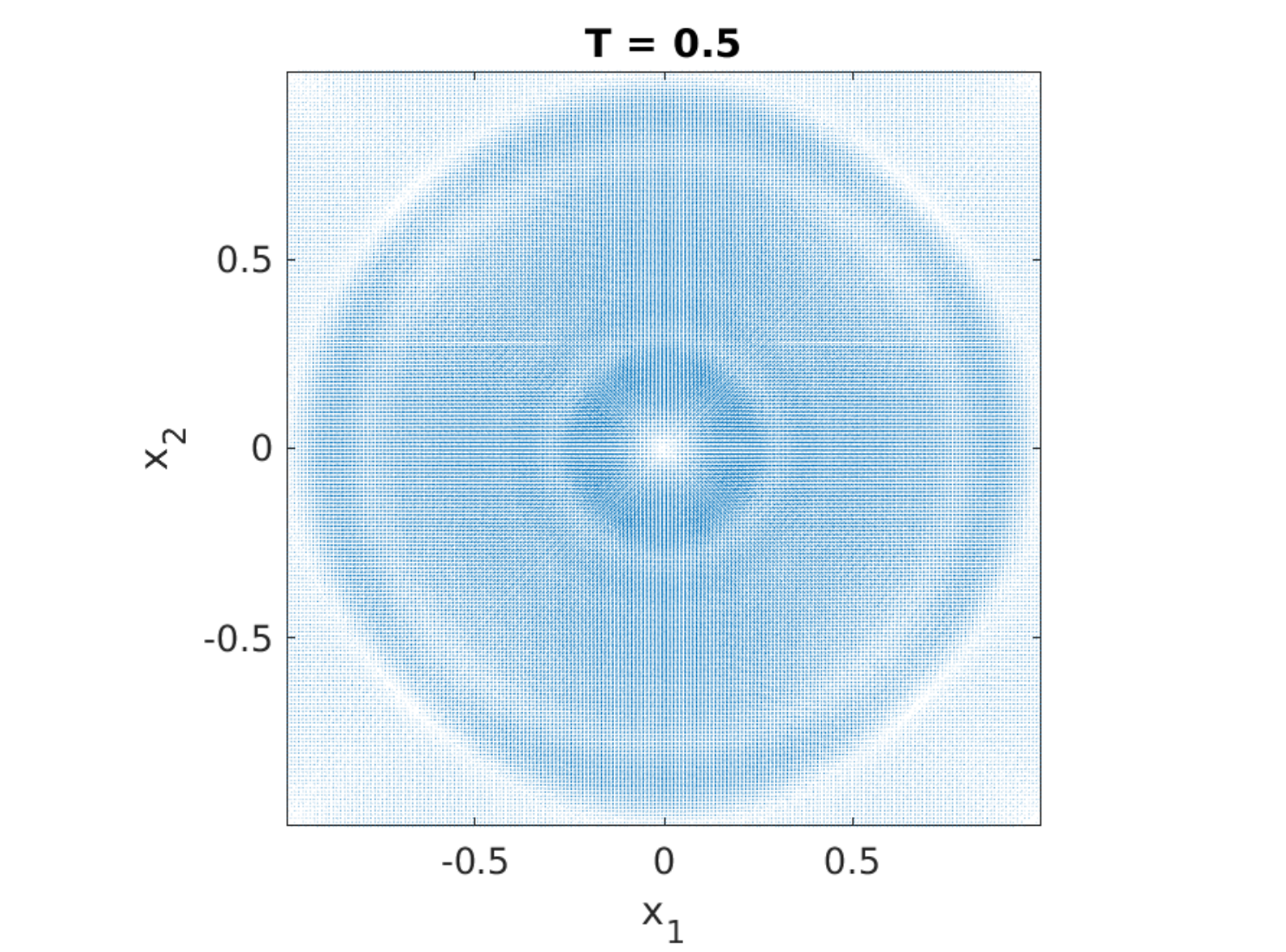} 
  \caption{Surface plots of the density (top panel), and quiver plots
    of the velocity field (bottom panel) for the cylindrical
    explosion problem with $\veps=1$, at times $T= 0.1$ (left), $T =
    0.24$ (middle), and $T = 0.5$ (right).} 
  \label{fig:sod}
\end{figure}

In the same problem, we set $\veps=10^{-3}$ to go to the weakly
compressible or nearly incompressible regime. The surface plots of the
density and divergence of the velocity is given
Figure~\ref{fig:den_dvg_contf}. It can be seen that the density is
equal to one, and the velocity divergence is very small, indicating
the convergence of the solution to the incompressible one.   
\begin{figure}[htbp]
  \centering
  \includegraphics[height=0.25\textheight]{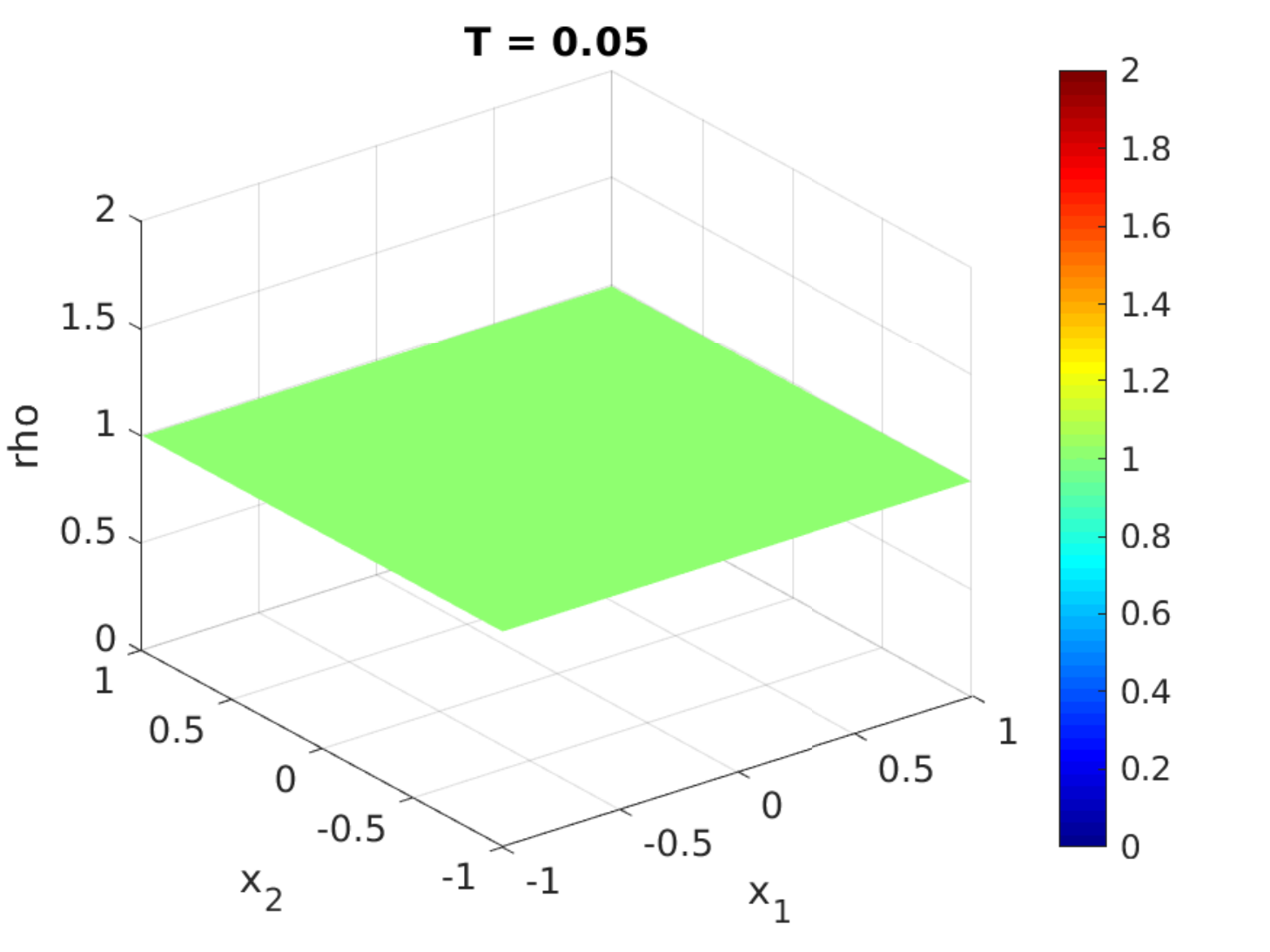}
  \includegraphics[height=0.25\textheight]{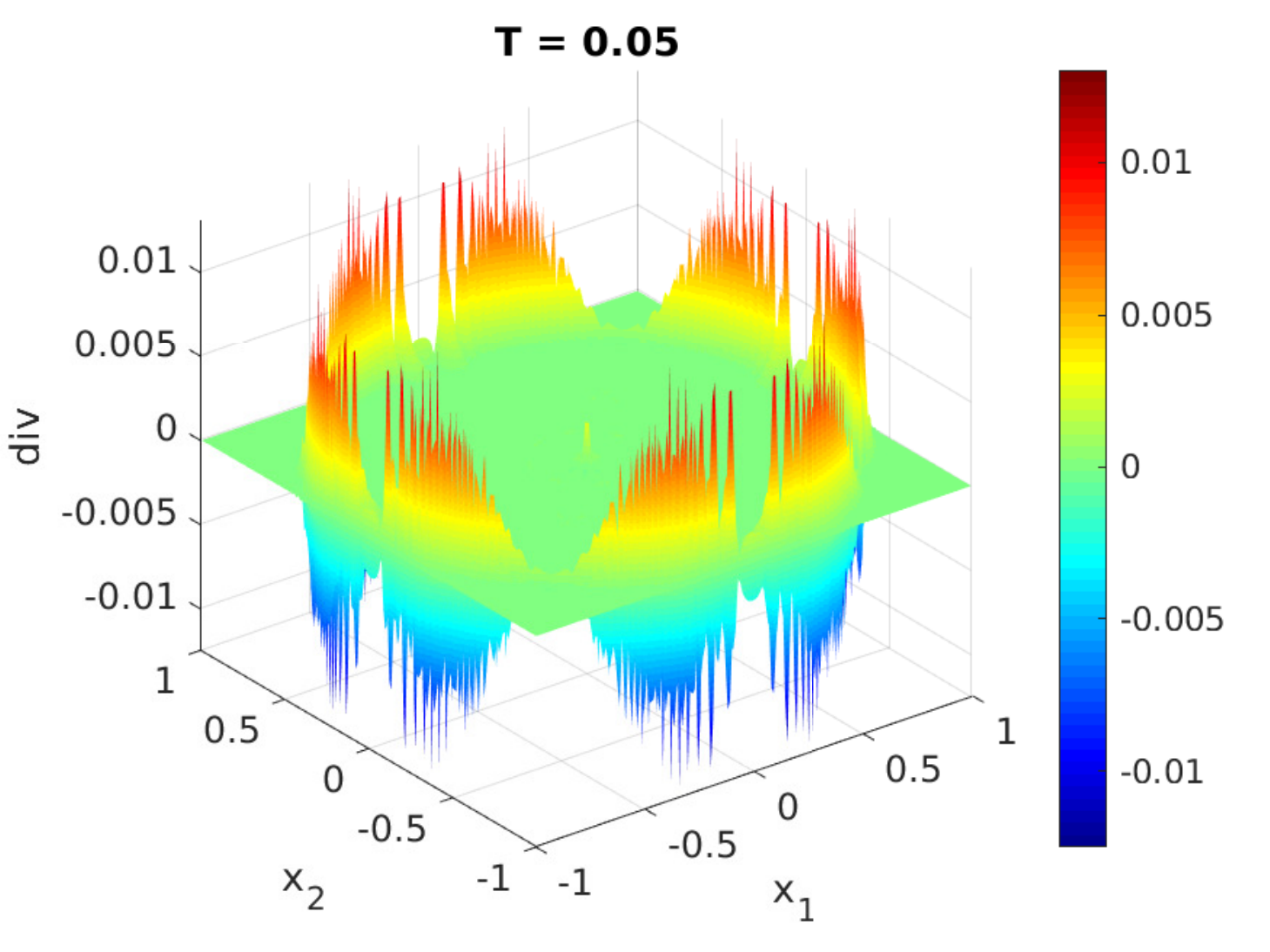} 
  \caption{Surface plot of the density (left) and divergence of the
    velocity (right) for the cylindrical explosion problem with
    $\veps=10^{-3}$ at time $T= 0.01$.} 
  \label{fig:den_dvg_contf}
\end{figure}

\section{Concluding Remarks}
\label{sec:sec8}
We have presented a class of second order accurate IMEX-RK finite
volume schemes for the compressible Euler equations in the zero Mach
number limit. Guided by the results of asymptotic analysis, the
nonlinear fluxes in the Euler equations are split into stiff and
non-stiff parts in which stiff terms correspond to fast acoustic waves
and the non-stiff terms to slow advection waves. In the time
discretisation, the stiff terms are treated implicitly, and the
non-stiff terms explicitly. The resulting time semi-discrete scheme is
shown to be AP and AA. In order to derive a space-time fully discrete
scheme, a Rusanov-type central flux used for the non-stiff terms and
central differences for stiff terms. The fully-discrete scheme is also
shown to be AP and AA. A sufficient for $L^2$-stability, involving only
the RK coefficients, overcoming the stiffness due to CFL restrictions
is proposed for the time semi-discrete as well as space-time
fully-discrete schemes. The numerical experiments confirm that the
scheme achieves uniform second order convergence with respect to the
Mach number, its dissipation is independent of the Mach number, and it 
converges to the incompressible solution with second order convergence
as the Mach number goes to zero.

\section*{Acknowledgements}
\label{sec:acknowledgements}

The authors thank the anonymous referees whose comments have lead to a
significant improvement of the manuscript.

\section*{Appendix}
\label{sec:appendix}

First, let us introduce the following shorthands.
\begin{equation}\label{eq:def_b_4}
  \begin{aligned}
    b^{(2)}_1 &= \tilde{\omega}_2 \tilde{c}_{2} - \frac{1}{2},  \quad
    b^{(2)}_2= \tilde{\omega}_1 c_{1}  + \tilde{\omega}_2
    c_{2} + \omega_2 \tilde{c}_{2} - 1, \quad
    b^{(2)}_3=\omega_1c_{1}+\omega_{2} c_{2}-\frac{1}{2}, \\
    b^{(3)}_1&= \frac{1}{2} - \left\{\tilde{\omega}_2 \tilde{c}_{2} \sum_{i=1}^{2}a_{i,i} +
    \tilde{\omega}_3 \sum_{j=1}^{3} ( \tilde{a}_{3,j} c_{j} + a_{3,j}
    \tilde{c}_{j}) + \omega_3 \tilde{c}_{2} \tilde{a}_{3,2} \right\}, \quad 
    b^{(3)}_2  = \frac{1}{6} - \tilde{\omega}_3 \tilde{c}_{2} \tilde{a}_{3,2} \\
    b^{(3)}_3 & = \frac{1}{2} - \left\{ \sum_{i = 1}^{3
      }\tilde{\omega}_i\sum_{j = 1}^{i } a_{i,j} c_{j} 
    +  \omega_2 \tilde{c}_{2}\sum_{i=1}^{2}a_{i,i}+ \omega_3\sum_{j =
      1}^{3} ( \tilde{a}_{3,j} c_{j} + a_{3,j} \tilde{c}_{j})
  \right\}, \quad 
    b^{(3)}_4 =  \frac{1}{6} - \sum_{i,j=1}^{3} \omega_i a_{i,j} c_{j}  \\
    b^{(4)}_1 &= \tilde{\omega}_2
    \tilde{c}_{2}\left(a_{1,1}\sum_{i=1}^2a_{i,i} +
      a_{2,2}^2\right)+\tilde{\omega}_3 \left(\sum_{j= 1}^2 
    \tilde{a}_{3,i} \sum_{i=1}^j c_{i} a_{j,i} + a_{3,2} \tilde{c}_{2}
    \sum_{i=1}^2a_{i,i} + a_{3,3} \sum_{i=1}^2 \tilde{a}_{3,i} c_{i} +
    \sum_{j=2}^3 a_{3,j} \tilde{c}_{j}\right)\\
  & \quad + \omega_{3} \tilde{a}_{3,2} \tilde{c}_{2} \sum_{i=1}^3
  a_{i,i} - \frac{1}{4} , \quad
  b^{(4)}_2 = \tilde{\omega}_{3} \tilde{a}_{3,2}\tilde{a}_{2,1}
    \sum_{i=1}^{3}a_{i,i} - \frac{1}{6}\\ 
    b^{(4)}_3 &= \sum_{k=1}^3 \tilde{\omega}_{k} \sum_{j = 1}^k
    a_{k,j} \sum_{i = 1}^j a_{j,i} c_{i} + \sum_{k=2}^3\omega_{k}
    \sum_{j=1}^{k-1} \tilde{a}_{k,j} \sum_{i=1}^j  a_{j,i} c_i +
    \sum_{k=2}^3 \omega_k \tilde{a}_{2,1} a_{k,2} \sum_{i=1}^2a_{i,i} \\
    &+ \omega_{3} a_{3,3} \sum_{j=1}^2 (\tilde{a}_{3,j} c_j + a_{3,j+1}
    \tilde{c}_{j+1}) - \frac{1}{6},  \quad
    b^{(4)}_4=  \sum_{k = 1}^3\omega_{k} \sum_{j=1}^k a_{k,j} \sum_{i
      = 1}^j a_{j,i} c_i - \frac{1}{24} 
  \end{aligned}
\end{equation}
With the above notations, the entries in the matrices
$\mcal{B}^{(2)},B^{(3)}$ and $B^{(4)}$ read
\begin{equation*}
  \begin{aligned}
    \mcal{B}_{1,1}^{(2)}&=\Dlt \left\{ b^{(2)}_1 ({\bu} \cdot \nabla
      )^2+b^{(2)}_3\frac{{\ba}^2}{\veps^2} \Delta
    \right\}+\frac{1}{2}\Delta x_k\abs{{{\baru}}_k}\partial_{x_k}^2,
    \quad 
    \mcal{B}_{1,2}^{(2)}=\Dlt {\br}b^{(2)}_2({\bu}\cdot\nabla
    ) \nabla \cdot,  \\
    \mcal{B}_{2,1}^{(2)}&=\Dlt
    \frac{{\ba}^2}{{\br}\veps^2}b^{(2)}_2({\bu} \cdot \nabla)
    \nabla, \quad
    \mcal{B}_{2,2}^{(2)}=\Dlt \left\{b^{(2)}_1({\bu} \cdot
      \nabla)^2+
      b^{(2)}_3\frac{{\ba}^2}{\veps^2}\nabla (\nabla \cdot) \right\}+\frac{1}{2}\Delta
    x_k\abs{{{\baru}}_k}\partial_{x_k}^2\mbb{I}_2.
\end{aligned}
\end{equation*}
\begin{equation*}
  \label{eq:ieu_B3_defn_TD}
  \begin{aligned}
    B^{(3)}_{1,1}&=b^{(3)}_2 ({\bu} \cdot \nabla)^3 +
    b^{(3)}_3\frac{{\ba}^2}{\veps^2}({\bu} \cdot \nabla) \Delta, 
    \quad
    B^{(3)}_{1,2}=b^{(3)}_1 \br ({\bu} \cdot \nabla)^2 \nabla \cdot + b^{(3)}_4
    \frac{{\ba}^2 {\br}}{\veps^2} \Delta \nabla \cdot, \\
    B^{(3)}_{2,1}&=b^{(3)}_1 \frac{{\ba}^2}{\veps^2} ({\bu} \cdot
    \nabla)^2 \nabla + b^{(3)}_4 \frac{{\ba}^4 }{{\br}\veps^4}
    \Delta \nabla,  \quad
    B^{(3)}_{2,2}=b^{(3)}_2({\bu} \cdot \nabla)^3 \mbb{I}_2+b^{(3)}_3
    \frac{{\ba}^2}{\veps^2}({\bu} \cdot \nabla) \nabla \nabla
    \cdot.
  \end{aligned}
\end{equation*}
\begin{equation*}
  \label{eq:ieu_B4_defn_TD}
  \begin{aligned}
    B^{(4)}_{1,1}&= b^{(4)}_1 \frac{{\ba}^2}{\veps^2}({\bu} \cdot \nabla)^2 \Delta +
      b^{(4)}_4\frac{{\ba}^4}{\veps^4} \Delta^2 - \frac{1}{24}
      ({\bu} \cdot \nabla)^4,  \quad
      B^{(4)}_{1,2}=b^{(4)}_2{\br} ({\bu} \cdot \nabla)^3 \nabla
      \cdot +b^{(4)}_3 \frac{{\ba}^2 {\br}}{\veps^2} ({\bu} \cdot \nabla)\Delta
      \nabla \cdot, \\
      B^{(4)}_{2,1}&= b^{(4)}_2 \frac{{\ba}^2}{{\br}
        \veps^2}({\bu} \cdot \nabla)^3 \nabla +
      b^{(4)}_3\frac{{\ba}^4 }{{\br}\veps^4}({\bu} \cdot
      \nabla)\Delta \nabla,  \
      B^{(4)}_{2,2}=b^{(4)}_1 \frac{{\ba}^2}{\veps^2}({\bu} \cdot
      \nabla)^2 \nabla (\nabla \cdot) +
      b^{(4)}_4\frac{{\ba}^4}{\veps^4} \nabla \Delta \nabla \cdot -
      \frac{1}{24} ({\bu} \cdot \nabla)^4 \mbb{I}_2.
  \end{aligned}
\end{equation*}

\begin{figure}[htbp]
  \centering
  \begin{tabular}{c|c c}
    0   & 0	& 0	\\
    1   & 1      & 0  \\
    \hline 
        & 1	 & 0	
  \end{tabular}
  \hspace{10pt}
  \begin{tabular}{c|c c}
    0   & 0	& 0	\\
    1   & 0      & 1  \\
    \hline 
        & 0	 & 1	
  \end{tabular}
  \hspace{30pt}
  \begin{tabular}{c|c c}
    0   & 0	& 0	\\
    1   & 1      & 0  \\
    \hline 
        & $\frac{1}{2}$	 & $\frac{1}{2}$	
  \end{tabular}
  \hspace{10pt}
  \begin{tabular}{c|c c}
    -1   & -1	& 0	\\
    2   & 1      & 1  \\
    \hline 
         & $\frac{1}{2}$	 & $\frac{1}{2}$
  \end{tabular}
  \hspace{30pt}
   \begin{tabular}{c|c c}
    0   & 0	& 0	\\
    1   & 1      & 0  \\
    \hline 
        & $\frac{1}{2}$	 & $\frac{1}{2}$	
  \end{tabular}
  \hspace{10pt}
  \begin{tabular}{c|c c}
    $1-\gamma_p$   & $1-\gamma_p$	& 0	\\
    $\gamma_p$  & $\gamma_p - \delta_p$      & $\delta_p$  \\
    \hline 
         & $\frac{1}{2}$	 & $\frac{1}{2}$
  \end{tabular}
  
  \vspace*{5mm}
   \begin{tabular}{c|c c c}
     0		& 0			& 0			& 0	\\
     $\gamma_a$	& $\gamma_a$		& 0 			& 0	\\
     $1$	& $\delta_a$		& $1 - \delta_a$		& 0	\\
     \hline 
                & $\delta_a$	& $1 - \delta_a$	& $0$
   \end{tabular}
   \hspace{10pt}
   \begin{tabular}{c|c c c}
     $0$	& $0$		& 0			& 0		\\
     $\gamma_a$	& $0$		& $\gamma_a$ 		& 0		\\
     $1$	& $0$		& $1 - \gamma_a$		& $\gamma_a$	\\
     \hline 
		& $0$		& $1 - \gamma_a$		& $\gamma_a$
   \end{tabular}
   \hspace{30pt}
   \begin{tabular}{c|c c c}
     0		& 0			& 0			& 0	\\
     $\frac{1}{2}$	& $\frac{1}{2}$		& 0 			& 0	\\
     $1$	& $1$		& $1$		& 0	\\
     \hline 
                & $0$	& $1$	& $0$
   \end{tabular}
   \hspace{10pt}
   \begin{tabular}{c|c c c}
     $0$	& $0$		& 0			& 0		\\
     $\frac{1}{2}$	& $0$		& $\frac{1}{2}$ 		& 0		\\
     $1$	& $\frac{1}{2}$		& $0$		& $\frac{1}{2}$	\\
     \hline 
                & $\frac{1}{2}$		& $0$		& $\frac{1}{2}$
   \end{tabular}
  \caption{Double Butcher tableaux of IMEX-RK schemes. Top left: Euler
  (1,1,1), top middle: JIN(2,2,2), top right: PR(2,2,2), where
  $\delta_p=1-(1/2\gamma_p)$, bottom left: ARS(2,2,2), where
  $\gamma_a=1-(\sqrt{2}/2), \ \delta_a=1-(1/2\gamma_a)$, and
  bottom right: CN(2,2,2).}
  \label{fig:double_butcher}
\end{figure}

\bibliography{references}
\bibliographystyle{abbrv}

\end{document}